\numberwithin{equation}{section}
\newtheorem{theorem}{\bf{Theorem}}[section]
\newtheorem{proposition}[theorem]{\bf{Proposition}}
\newtheorem{corollary}[theorem]{\bf{Corollary}}
\newtheorem{lemma}[theorem]{\bf{Lemma}}
\newtheorem{remark}[theorem]{\bf{Remark}}
\newtheorem{example}[theorem]{\bf{Example}}
\newtheorem{assumption}[theorem]{\bf{Assumption}}
\newcommand{\abs}[1]{\left|#1\right|}
\newcommand{\car}[1]{\left|#1\right|}
\newcommand{\pairangone}[1]{\langle#1\rangle}
\newcommand{\rest}{\lvert}
\newcommand{\ord}[1]{\mathrm{ord}(#1)}
\newcommand{\mrgl}{\operatorname{GL}}
\newcommand{\mrsl}{\operatorname{SL}}
\newcommand{\mrsp}{\operatorname{Sp}}
\newcommand{\mrm}{\operatorname{M}}
\newcommand{\mrtr}{\operatorname{tr}}
\newcommand{\mrdet}{\operatorname{det}}
\newcommand{\mrdim}{\operatorname{dim}}
\newcommand{\mrhom}{\operatorname{Hom}}
\newcommand{\mrdiag}{\operatorname{diag}}
\newcommand{\mrgcd}{\operatorname{gcd}}
\newcommand{\mrrep}{\operatorname{Rep}}
\newcommand{\mrend}{\operatorname{End}}
\newcommand{\mraut}{\operatorname{Aut}}
\newcommand{\mrmod}{\operatorname{Mod}}
\newcommand{\mrInd}{\operatorname{Ind}}
\newcommand{\mrind}{\operatorname{ind}}
\newcommand{\mrinf}{\operatorname{Inf}}
\newcommand{\id}{\operatorname{id}}
\newcommand{\mrsupp}{\operatorname{Supp}}
\newcommand{\mbn}{\mathbb{N}}
\newcommand{\mbz}{\mathbb{Z}}
\newcommand{\mbc}{\mathbb{C}}
\newcommand{\mbf}{\mathbb{F}}
\newcommand{\mfp}{\mathfrak{p}}
\newcommand{\mfo}{\mathfrak{o}}
\newcommand{\mfa}{\mathfrak{a}}
\newcommand{\mfb}{\mathfrak{b}}
\newcommand{\mfs}{\mathfrak{s}}
\newcommand{\mfS}{\mathfrak{S}}
\newcommand{\mco}{\mathcal{O}}
\newcommand{\mcc}{\mathcal{C}}
\newcommand{\mcb}{\mathcal{B}}
\newcommand{\mca}{\mathcal{A}}
\newcommand{\mcf}{\mathcal{F}}
\newcommand{\mcg}{\mathcal{G}}
\newcommand{\mch}{\mathcal{H}}
\newcommand{\mcm}{\mathcal{M}}
\newcommand{\mcn}{\mathcal{N}}
\newcommand{\mcp}{\mathcal{P}}
\newcommand{\mcu}{\mathcal{U}}
\newcommand{\mcv}{\mathcal{V}}
\newcommand{\mcr}{\mathcal{R}}
\newcommand{\mcx}{\mathcal{X}}
\newcommand{\amax}{\mfa_{\text{max}}}
\newcommand{\bmax}{\mfb_{\text{max}}}
\newcommand{\Jmax}{J_{\text{max}}}
\newcommand{\Jonemax}{J_{\text{max}}^{1}}
\newcommand{\Honemax}{H_{\text{max}}^{1}}
\newcommand{\kappamax}{\kappa_{\mathrm{max}}}
\newcommand{\bs}[1]{\boldsymbol{#1}}
\newcommand{\ol}[1]{\overline{#1}}
\newcommand{\ul}[1]{\underline{#1}}
\begin{document}

	\title[Gelfand--Graev representation as a Hecke algebra module]{Gelfand--Graev representation as a Hecke algebra module of simple types of a finite central cover of $\mrgl(r)$}
	
	\author{Jiandi Zou}
	\address{Institute for Advanced Study in Mathematics of Harbin Institute of Technology, Harbin, China}
	\email{idealzjd@gmail.com}
	\keywords{Gelfand--Graev representation, Whittaker dimension, simple types, metaplectic covers, p-adic groups, Hecke algebras}
	\subjclass[2020]{Primary 11F70, 22E50; Secondary 19C09}
	
	
	\begin{abstract}
		
		For an $n$-fold Kazhdan--Patterson cover or Savin's cover of a general linear group over a non-archimedean local field of residual characteristic $p$ with $\mrgcd(n,p)=1$, we realize the Gelfand--Graev representation as a Hecke algebra module of a simple type and study its explicit expression. As a main corollary, we calculate the Whittaker dimension of every discrete series representation of such a cover. Using Zelevinsky's classification, this theoretically gives the Whittaker dimension of every irreducible representation.
		
	\end{abstract}
	
	\maketitle
	\tableofcontents
	
	\section{Introduction}
	
	\subsection{Background}
	
	In the representation theory of $p$-adic  reductive groups, an important phenomenon is the existence and uniqueness of the Whittaker model, which has various applications such as constructing Rankin--Selberg L-functions, coordinating L-parameters in an L-packet and deducing different kinds of multiplicity one theorems, etc.
	
	More precisely, let $G$ be a quasi-split group over a non-archimedean local field $F$, let $U$ be a unipotent radical of a Borel subgroup of $G$ and let $\psi^{\vee}$ be a generic character of $U$. We call a complex smooth irreducible representation $\pi^{\vee}$ of $G$ \emph{generic} if the following vector space
	$$\mrhom_{U}(\pi^{\vee},\psi^{\vee})\cong\mrhom_G(\pi^{\vee},\mrInd_U^G(\psi^{\vee}))$$
	is non-zero, where $\mrInd_U^G(\psi^{\vee})$ denotes the induction and we used the Frobenius reciprocity. Then, it is known for a while (\emph{cf.} \cite{shalika1974multiplicity}*{Appendix}, also \cite{bernstein1976representations} and \cite{rodier1975modele} for $G=\mrgl_r(F)$) that the above space is one-dimensional. It means that there exists a unique way up to a scalar that embeds $\pi^{\vee}$ into $\mrInd_{U}^{G}(\psi^{\vee})$, identifying vectors in $\pi^{\vee}$ with  $(U,\psi^{\vee})$-equivariant complex smooth functions and justifing the word  ``Whittaker model". Taking the dual, the vector space
	$$\mrhom_{G}(\mrind_U^{G}(\psi),\pi)$$
	is also one-dimensional, where $\pi$ (resp. $\psi$) denotes the contragredient of $\pi^{\vee}$ (resp. $\psi^{\vee}$), and $\mrind_U^{G}(\psi)$ denotes the compact induction. Similarly, $\pi$ can be uniquely realized as a quotient of $\mrind_U^{G}(\psi)$, which is called the Gelfand--Graev model of $\pi$.
	
	Now we change our stage. Assume that $F$ contains all the $n$-th roots of unity denoted by $\mu_n$ for some fixed $n\geq 1$. Let $\ol{G}$ be an $n$-fold cover of $G$, i.e., a central extension of $G$ by $\mu_n$ as $\ell$-groups. In this case, there exists a unique splitting (\emph{cf.} Section \ref{sectionmetacover}) of $U$ that is invariant under the related Borel group conjugation (\emph{cf.} \cite{moeglin1995spectral}*{Appendix I}). Using this splitting, we identify $U$ with a subgroup of $\ol{G}$. Still, for an  irreducible (genuine) representation $\pi$ of $\ol{G}$, we would like to study the Whittaker dimension of $\pi$, or equivalently, the dimension of the space 
	$$\mrhom_{\ol{G}}(\mrind_U^{\ol{G}}(\psi),\pi).$$
	We remark that in general, unlike the linear case, this dimension is not always smaller than or equal to one.  
	Thus it is curious if for general enough representations the related Whittaker (resp. Gelfand--Graev) dimension could be determined. 
	
	We list the state-of-the-art of the above problem:
	\begin{itemize}
		\item Using Harish-Chandra's germ expansion, the dimension is always finite (\emph{cf.} \cite{patel2014theorem}).
		
		\item If $G$ is a torus, then the Whittaker dimension is exactly the dimension of the representation, which could be determined via the Stone--Von-Neumann theorem (\emph{cf.} for instance \cite{weissman2009metaplectic}).
		
		\item If $G$ is split and $\ol{G}$ is a tame Brylinski--Deligne cover\footnote{Here and after, by tameness of an $n$-fold cover of $G$ we mean that $\mrgcd(n,p)=1$.}, then for large classes of irreducible representations having Iwahori fixed vectors, Gao \cite{gao2023r} proposed an explicit numerical conjecture, which has been proved by Gao--Gurevich--Karasievicz \cite{gao2024genuine} by realizing the Gelfand--Graev representation as a pro-$p$ Iwahori Hecke algebra module.
		
		\item If $\ol{G}$ is a tame Brylinski--Deligne cover, then for depth-0 cuspidal representations, Gao--Weissman \cite{gao2019whittaker} calculated the dimension, generalizing the previous result of Blondel \cite{blondel1992uniqueness} when $G=\mrgl_r(F)$.
		
		\item If $\ol{G}$ is a tame Kazhdan--Patterson cover of $G=\mrgl_r(F)$,  the author \cite{zou2022metaplectic} proposed a conjecture (\cite{zou2022metaplectic}*{Conjecture 4.6}, that was verified for certain cases) related to the special values of the Harish-Chandra character of a generalized Speh representation of $G$. Based on that, the author calculated the related dimension for discrete series representations of $\ol{G}$ using the metaplectic correspondence.     
		
	\end{itemize}
	However, because of the failure of multiplicity one in general, the usual method of considering invariant distributions in the proof of Shalika \cite{shalika1974multiplicity} does not work anymore, making the calculation for general representations hard enough.
	
	\subsection{Main theorem and applications}
	
	The main goal of our paper is two-fold:
	
	\begin{itemize}
		\item One the one hand, we calculate the Whittaker dimension for general irreducible representations of a tame Kazhdan--Patterson cover or Savin's cover $\ol{G}$ of $G=\mrgl_r(F)$ (\emph{cf.} \S \ref{subsectionKPScover});
		
		\item On the other hand, the method we developed is general, which could be used to calculate the Whittaker dimension for other covers, once the related ``type theory" is developed.
	\end{itemize}  
	To explain our result, we assume the readers to have a basic knowledge on the simple type theory of $\ol{G}$ developed in \cite{zou2023simple} and leave Section \ref{sectionsimpletype} for more details. Let $(\ol{J},\lambda)$ be a simple type of $\ol{G}$, where $\ol{J}$ is an open compact subgroup of $\ol{G}$ and $\lambda$ an irreducible representation of $\ol{J}$. Then, we may define the following functor:
	$$\bs{\mathrm{M}}_\lambda:\mrrep(\ol{G})\rightarrow\mrmod(\mch(\ol{G},\lambda)),\quad \pi\mapsto\mrhom_{\ol{G}}(\mrind_{\ol{J}}^{\ol{G}}(\lambda),\pi),$$
	where $\mch=\mch(\ol{G},\lambda)$ is the related Hecke algebra, $\mrrep(\ol{G})$ the category of smooth representations of $\ol{G}$ and $\mrmod(\mch(\ol{G},\lambda))$ the category of left $\mch(\ol{G},\lambda)$-modules. In particular, $\mch=\mca\otimes_{\mbc}\mch_0$ is an affine Hecke algebra of type A equipped with Bernstein's presentation, where $\mca$ is the ``lattice part" as a free abelian group algebra of rank $k$ (a positive integer depending on $\lambda$) and $\mch_0$ is the ``finite part" as a finite Hecke algebra related to the symmetric group $\mfS_k$.
	
	Now we plug the Gelfand--Graev representation $\mcv=\mrind_U^{\ol{G}}(\psi)$ into that functor. The following main theorem determines the structure of  $\mcv^{\lambda}:=\bs{\mathrm{M}}_\lambda(\mcv)$ as an  $\mch$-module.
	
	\begin{theorem}[\emph{cf.} Theorem \ref{thmmain}]\label{thmmainone}
		
		Let $\ol{G}$ be either a tame Kazhdan--Patterson cover or the Savin cover of $G=\mrgl_r(F)$. Then we have a decomposition of $\mch$-modules $$\mcv^{\lambda}=\bigoplus_{\mco\in\mcx(\lambda)/\mfS_k }\mcv_{\mco}^{\lambda},$$
		such that each $\mcv_{\mco}^{\lambda}$ is isomorphic to  $\mca\otimes_{\mbc}(\mch_{0}\otimes_{\mch_{\mco}}\varepsilon_{\mco})$, where
		\begin{itemize}
			\item  $\mcx(\lambda)$ is a finite abelian group constructed from $\lambda$ and endowed with an $\mfS_k$-action  (\emph{cf.} \S \ref{subsectionsimpletypes});
			
			\item $\mch_{\mco}$ is the Hecke subalgebra of $\mch_0$ related to the stabilizer of $\mco$ in $\mfS_k$;
			
			\item $\varepsilon_{\mco}$ is the sign character of $\mch_{\mco}$, thus $\mch_{0}\otimes_{\mch_{\mco}}\varepsilon_{\mco}$ is an $\mch_{0}$-module.
			
		\end{itemize}
		
	\end{theorem}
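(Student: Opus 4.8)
The plan is to first put $\mcv^{\lambda}=\mrhom_{\ol{G}}\bigl(\mrind_{\ol{J}}^{\ol{G}}(\lambda),\mcv\bigr)$ in a concrete form. Since $\ol{J}$ is open and $\lambda$ is finite-dimensional, Frobenius reciprocity for compact induction gives $\mcv^{\lambda}\cong\mrhom_{\ol{J}}\bigl(\lambda,\mcv|_{\ol{J}}\bigr)$, and unfolding $\mcv=\mrind_{U}^{\ol{G}}(\psi)$ identifies $\mcv^{\lambda}$ with a space of \emph{type-$\lambda$ Whittaker functions}: functions $f\colon\ol{G}\to\lambda^{\vee}$ that are left $(U,\psi)$-equivariant, transform under right translation by $\ol{J}$ through $\lambda$, and are compactly supported modulo $U$; the $\mch=\mch(\ol{G},\lambda)$-module structure is the one induced from right convolution. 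Next I would invoke the structure theory of \cite{zou2023simple}: writing $\ol{G}$ as a disjoint union of $U$-$\ol{J}$ double cosets $U g\ol{J}$, such an $f$ is determined by its values on coset representatives, and compatibility of the two equivariances along the finite group $g^{-1}Ug\cap\ol{J}$ forces $f(g)$ to lie in a space of $\psi$-coinvariants that vanishes unless $g$ ranges over a distinguished family and is otherwise one-dimensional. This family, together with the way the convolution operators permute and shift it, is exactly what is organized by the lattice $\Lambda$ underlying $\mca=\mbc[\Lambda]$, the finite abelian group $\mcx(\lambda)$ and the $\mfS_{k}$-action; this is the step that produces the index set $\mcx(\lambda)/\mfS_{k}$ and the characters $\varepsilon_{\mco}$.

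Then I would split off the lattice part of $\mch=\mca\otimes_{\mbc}\mch_{0}$. Because $\psi$ is generic, right translation of a type-$\lambda$ Whittaker function by a strictly dominant element of $\Lambda$ is injective with linearly independent translates (the usual stability-of-support argument, as in the treatment of the Gelfand--Graev representation of $\mrgl_r(F)$), so $\mcv^{\lambda}$ is free over $\mca$ with an $\mca$-basis naturally indexed by the ``level-zero'' cosets, i.e. by $\mcx(\lambda)$. It then remains to identify the residual action of the finite Hecke algebra $\mch_{0}$, equivalently of $\mfS_{k}$ and the parameter, on this $\mca$-basis. I would compute the action of each standard generator $T_{s_{i}}$ of $\mch_{0}$ through the explicit convolution/intertwining formulas attached to the simple type in \cite{zou2023simple}: on the finite reductive quotient the relevant input is the computation governing the generic constituent of a principal series of a general linear group over a residue field, which pins $T_{s_{i}}$ down to act by the scalar $-1$ on functions supported on an $s_{i}$-fixed coset, i.e. by the sign character on the parabolic subalgebra $\mch_{\mco}$. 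Decomposing the $\mfS_{k}$-set $\mcx(\lambda)$ into orbits and reassembling then yields $\mcv^{\lambda}\cong\bigoplus_{\mco\in\mcx(\lambda)/\mfS_{k}}\mca\otimes_{\mbc}\bigl(\mch_{0}\otimes_{\mch_{\mco}}\varepsilon_{\mco}\bigr)$.

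The step I expect to be the main obstacle is precisely this last, finite computation. One must show not only that each $T_{s_{i}}$ stabilizing an orbit acts by $-1$ (the ``sign'' part), but that the off-diagonal terms linking the distinct cosets inside a single $\mfS_{k}$-orbit of $\mcx(\lambda)$ assemble exactly into the induced module $\mch_{0}\otimes_{\mch_{\mco}}\varepsilon_{\mco}$ with no further relations, and that cosets in different orbits yield genuinely complementary direct summands. This requires tracking, with signs, how the $\mfS_{k}$-action on $\mcx(\lambda)$ — which in the covering case also records the metaplectic ``block'' data, \emph{cf.}\ \S\ref{subsectionsimpletypes} — is matched by the support shifts of type-$\lambda$ Whittaker functions, and handling genuine representations together with the canonical splitting of $U$ in $\ol{G}$ without sign ambiguity. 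A secondary, lighter point is that one works throughout inside the Bernstein block(s) cut out by $(\ol{J},\lambda)$: since $\mrind_{\ol{J}}^{\ol{G}}(\lambda)$ is projective and concentrated there, $\bs{\mathrm{M}}_{\lambda}(\mcv)=\bs{\mathrm{M}}_{\lambda}$ of the corresponding block component of $\mcv$, so the functor detects all of the relevant structure and the category equivalence furnished by the type may be used freely.
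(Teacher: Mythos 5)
Your outline reproduces the general architecture of the paper (freeness over $\mca$ of rank $\car{\mcx(\lambda)}$, a sign-type $\mch_0$-structure coming from the finite reductive quotient, reassembly over $\mfS_k$-orbits of $\mcx(\lambda)$), but the two places where you defer to "the structure theory of \cite{zou2023simple}" and to a final "finite computation" are precisely where the real content lies, and they are not filled. First, the claim that for a $U$-$\ol{J}$ double coset the space $\mrhom_{\ol{J}\cap U^g}(\lambda,\psi^g)$ vanishes off a distinguished family and is one-dimensional on it is not available from the type theory alone: it requires choosing the Whittaker datum $(U,\psi)$ in good position relative to the simple type via Paskunas--Stevens (\S \ref{subsectionPSresult}), the multiplicity-one theorem for Whittaker models of $\mrgl_r(F)$ and of finite general linear groups (used in Lemma \ref{lemmaVlambdasupport} and Lemma \ref{lemmacondtx-1nonzero}), and, for the converse direction controlling supports, Proposition \ref{propthetachieqconverse}, whose key input is the S\'echerre--Stevens statement quoted as Lemma \ref{lemmatechnical}. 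Without this last point you cannot pin the support of the distinguished elements $\Phi_t$ down to a single coset $Ut\ol{J'}$ (Proposition \ref{propPhitiPhi'tivarphiti}), and the later linear-independence arguments collapse.

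Second, and more seriously, your two steps (an $\mca$-basis indexed by $\mcx(\lambda)$; an $\mch_0$-module computation on "level-zero cosets") are treated as independent and then "reassembled", but $\mca$ and $\mch_0$ do not commute inside $\mch$ (the Bernstein relation \eqref{eqbernstein} has correction terms), so it is not automatic that $(\mca\otimes_\mbc\mch_0)\ast\Phi_{t_i}$ is an $\mch$-submodule isomorphic to $\mca\otimes_\mbc(\mch_0\otimes_{\mch_{\mco}}\varepsilon_{\mco})$, nor that the chosen orbit representatives can simultaneously serve as part of a free $\mca$-basis. This compatibility is the content of strategy \eqref{eqstrategy}: one must show that a basis of $\bigcup_i\mch_0\ast\Phi_{t_i}$ is a free $\mca$-basis of $\mcv^{\lambda_P}$, which the paper does by passing through the Jacquet module $r_N(\mcv)\cong\mcv_M$ (Proposition \ref{propGGmoduleJacquet}), the cuspidal-level freeness (Proposition \ref{propVMlambdaMfree}), and the support bookkeeping of Lemma \ref{lemmaTwphit}; for Kazhdan--Patterson covers this further needs the $\zeta_E$-versus-$T_0(\mfb)$ analysis (the $\mathrm{ord}$ argument), since $T(\mfb,\varrho)$ is strictly larger than $T_0(\mfb)$ there. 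Your proposal never explains how representatives $t_i$ are chosen so that their $\mfS_k$-orbits in $T(\mfb)$ inject into $\mcx(\lambda)/\mfS_k$ and so that the translated modules are in direct sum, nor why the argument is restricted to Kazhdan--Patterson and Savin covers beyond invoking $\mch\cong\mca\otimes_\mbc\mch_0$ (Assumption \ref{assumHeckealg}); this is exactly the obstacle you flag at the end, and it is left open rather than resolved.
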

	
	The main corollary of the above theorem is the calculation of the Gelfand--Graev dimension. Let $\pi$ be a discrete series representation of $\ol{G}$ whose inertial equivalence class $\mfs$ corresponds to the simple type $(\ol{J},\lambda)$, or in other words, $(\ol{J},\lambda)$ is a type of $\mfs$ in the sense of Bushnell--Kutzko. Then, applying the functor $\bs{\mathrm{M}}_\lambda$, we get the following isomorphism of vector spaces:
	$$\mrhom_{\ol{G}}(\mcv,\pi)\cong\mrhom_{\mch}(\mcv^\lambda,\pi^\lambda).$$
	We already know the structure of $\mcv^\lambda$. On the other hand,  $\pi^\lambda$ is a one-dimension module of $\mch$ whose restriction to $\mch_0$ is the sign character $\varepsilon_{0}$. Then, after standard argument,
	
	\begin{corollary}[\emph{cf.} Proposition \ref{propdiscreteWhittaker}]
		
		We have $\dim_{\mbc}\mrhom_{\ol{G}}(\mrind_U^{\ol{G}}(\psi),\pi)=\car{\mcx(\lambda)/\mfS_k}$.
		
	\end{corollary}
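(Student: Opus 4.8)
The plan is to transport the Whittaker Hom space into the module category of the affine Hecke algebra $\mch$, apply Theorem \ref{thmmainone} for the structure of $\mcv^{\lambda}$, and conclude with the tensor--hom adjunction.

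First I would invoke the type theory: since $(\ol J,\lambda)$ is an $\mfs$-type in the sense of Bushnell--Kutzko, the functor $\bs{\mathrm{M}}_\lambda$ restricts to an equivalence of categories from the Bernstein block $\mcr^{\mfs}(\ol G)$ onto $\mrmod(\mch)$, sending $\pi$ to $\pi^{\lambda}$. As $\mrind_{\ol J}^{\ol G}(\lambda)$ lies in $\mcr^{\mfs}(\ol G)$, the functor $\bs{\mathrm{M}}_\lambda$ annihilates every other Bernstein component, so $\bs{\mathrm{M}}_\lambda(\mcv)=\bs{\mathrm{M}}_\lambda(\mcv^{\mfs})=\mcv^{\lambda}$, where $\mcv^{\mfs}$ denotes the $\mfs$-component of $\mcv=\mrind_U^{\ol G}(\psi)$; similarly $\mrhom_{\ol G}(\mcv,\pi)=\mrhom_{\ol G}(\mcv^{\mfs},\pi)$ because $\pi\in\mcr^{\mfs}(\ol G)$ and there are no morphisms between distinct blocks. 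Applying the equivalence then produces the isomorphism $\mrhom_{\ol G}(\mcv,\pi)\cong\mrhom_{\mch}(\mcv^{\lambda},\pi^{\lambda})$ stated above.

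Now I would insert the two structural inputs. Theorem \ref{thmmainone} gives the finite decomposition $\mcv^{\lambda}=\bigoplus_{\mco\in\mcx(\lambda)/\mfS_k}\mcv_{\mco}^{\lambda}$ with $\mcv_{\mco}^{\lambda}\cong\mca\otimes_{\mbc}(\mch_0\otimes_{\mch_{\mco}}\varepsilon_{\mco})$; the key point is that this identification is valid as $\mch$-modules, exhibiting $\mcv_{\mco}^{\lambda}$ as the module induced from the $\mch_{\mco}$-module $\varepsilon_{\mco}$ along the chain of subalgebras $\mch_{\mco}\subseteq\mch_0\subseteq\mch$, that is, $\mcv_{\mco}^{\lambda}\cong\mch\otimes_{\mch_{\mco}}\varepsilon_{\mco}$. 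On the target side, since $\pi$ is a discrete series representation, $\pi^{\lambda}$ is one-dimensional and $\pi^{\lambda}|_{\mch_0}$ equals the sign character $\varepsilon_0$. Because $\mch_{\mco}$ is a parabolic subalgebra of $\mch_0$ --- the stabilizer in $\mfS_k$ of a representative of $\mco$ being a Young subgroup, by the construction of $\mcx(\lambda)$ in \S\ref{subsectionsimpletypes} --- the sign character $\varepsilon_0$ restricts to the sign character $\varepsilon_{\mco}$ of $\mch_{\mco}$ (both send $T_w$ to $(-1)^{\ell(w)}$, compatibly with the length function), so $\pi^{\lambda}|_{\mch_{\mco}}=\varepsilon_{\mco}$. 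Combining the tensor--hom adjunction for $\mch_{\mco}\subseteq\mch$ with the decomposition of $\mcv^{\lambda}$ then yields
\[
\mrhom_{\mch}(\mcv^{\lambda},\pi^{\lambda})\cong\bigoplus_{\mco\in\mcx(\lambda)/\mfS_k}\mrhom_{\mch_{\mco}}(\varepsilon_{\mco},\pi^{\lambda}|_{\mch_{\mco}})=\bigoplus_{\mco\in\mcx(\lambda)/\mfS_k}\mrhom_{\mch_{\mco}}(\varepsilon_{\mco},\varepsilon_{\mco}),
\]
a direct sum of $\car{\mcx(\lambda)/\mfS_k}$ lines; this is exactly the claimed dimension.

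Both adjunctions, the finite direct-sum splitting, and the compatibility of the sign character with restriction are formal; the real content is carried by the two imported facts --- Theorem \ref{thmmainone} itself, and the statement that a discrete series corresponds under $\bs{\mathrm{M}}_\lambda$ to the one-dimensional $\mch$-module with $\mch_0$ acting through $\varepsilon_0$ (the metaplectic counterpart of the classical $\mrgl_r$ dictionary between square-integrable representations and the sign/Steinberg module of the finite Hecke algebra) --- both proved elsewhere in the paper. I expect the only delicate bookkeeping to be the identification of $\mcv_{\mco}^{\lambda}$ with $\mch\otimes_{\mch_{\mco}}\varepsilon_{\mco}$ as an $\mch$-module rather than merely as an $\mca$- or $\mch_0$-module; this already comes out of the proof of Theorem \ref{thmmainone} and can simply be quoted.
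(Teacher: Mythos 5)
Your proposal is correct and follows essentially the same route as the paper: apply $\bs{\mathrm{M}}_\lambda$ (justified by the type-theoretic block equivalence) to get $\mrhom_{\ol{G}}(\mcv,\pi)\cong\mrhom_{\mch}(\mcv^\lambda,\pi^\lambda)$, quote the decomposition of $\mcv^\lambda$ from the main theorem, use that $\pi^\lambda$ is one-dimensional with $\mch_0$ acting by $\varepsilon_0$ (the paper cites \cite{zou2023simple}*{Proposition 9.10} for this), and finish by Frobenius reciprocity along $\mch_{\mco}\subseteq\mch_0\subseteq\mch$. The extra details you supply (the Bernstein-component reduction and reading $\mcv_{\mco}^{\lambda}$ as $\mch\otimes_{\mch_{\mco}}\varepsilon_{\mco}$) are exactly what the paper's terser computation implicitly uses.
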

	
	Since for every discrete series representation $\pi$ we may find a simple type $(\ol{J},\lambda)$ as above, then we get the Gelfand--Graev dimension of any discrete series representation $\pi$. Using Zelevinsky's classification, theoretically we get the Gelfand--Graev dimension (or equivalently, Whittaker dimension) of every irreducible representation of $\ol{G}$.
	
	We emphasize several applications/motivations of our result.
	
	\begin{itemize}
		
		\item In the doubling method of constructing L-functions of covers of general linear groups and symplectic groups, it is crucial to show that certain degenerate Whittaker model of certain generalized Speh representations have the multiplicity one (\emph{cf.} \cite{kaplan2019doubling}, \cite{kaplan2022rankin}*{Conjecture 14}). This could be done by combining our dimensional result with a standard Bernstein--Zelevinsky style of argument. We leave \cite{zou2022metaplectic}*{Section 5} for more details.
		
		\item It is conjectured \cite{wang2024distinction}*{Conjecture 1.1} that the multiplicity of a tempered representation $\pi$ of $G=\mrgl_r(F)$ occurring in the $G$-space of smooth functions on invertible symmetric matrices can be expressed as the sum of the square of the Whittaker dimension of those tempered representations $\ol{\pi}$ of a two-fold Kazhdan--Patterson cover $\ol{G}$ of $G$ that lifts to $\pi$ via the metaplectic correspondence. Then, our result provides numerical evidence for that conjecture, which should be regarded as an ``exceptional" case of the so-called ``relative local Langlands program".
		
		\item In \cite{gao2022gelfand}, the authors used the decomposition of the Gelfand--Graev representation (\emph{cf.} \cite{gao2024genuine}*{Theorem 1.2}) as a bridge to construct a type A quantum Schur--Weyl duality. Since our Theorem \ref{thmmainone} generalizes their decomposition theorem in the type A case, it is expected that their study of quantum Schur--Weyl duality could be considered for general simple types as well.
		
	\end{itemize} 
	
	\subsection{Sketch of proof}
	
	We sketch the proof of Theorem \ref{thmmainone}. 
	
	For those readers familiar with \cite{gao2024genuine}, it is not hard to see that the statement of our main results largely follows from theirs. However, unlike \emph{loc. cit.} where they considered the pro-$p$ Iwahori Hecke algebra action, it seem difficult for the author to find an analogue of pro-$p$ Iwahori Hecke algebra for a general simple type. 
	
	Instead, we use the following three-step strategy which is more or less influenced by the arguments in \cite{chan2018iwahori} and \cite{chan2019bernstein}. 
	
	\begin{enumerate}
		
		\item We first show that $\mcv^{\lambda}$ is a free $\mca$-module of rank $\car{\mcx(\lambda)}$.
		
		\item We may pick a certain sequence of representatives $t_1,\dots,t_d\in \mcx(\lambda)$ of $\mcx(\lambda)/\mfS_k$ with $d=\car{\mcx(\lambda)/\mfS_k}$ and related elements $\Phi_{t_1},\dots,\Phi_{t_d}\in \mcv^{\lambda}$ with $t_{i}\in \mcx(\lambda)$, such that the $\mch_0$-module $\mch_0\ast\Phi_{t_i}$ is isomorphic to $\mch_0\otimes_{\mch_{\mco_{t_i}}}\varepsilon_{\mco_{t_i}}$ for each $i$. 
		
		\item Finally, let $\mcb(\mch_0\ast\Phi_{t_i})$ be a $\mbc$-basis of the vector space $\mch_0\ast\Phi_{t_i}$ for each $i$. We show that $\bigcup_{i=1}^d\mcb(\mch_0\ast\Phi_{t_i})$ forms a free $\mca$-basis of $\mcv^{\lambda}$, then we have $$\mcv^{\lambda}=\bigoplus_{i=1}^t\mcv_{\mco_{t_i}}^{\lambda},$$where $\mcv_{\mco_{t_i}}^{\lambda}:=(\mca\otimes_{\mbc}\mch_0)\ast\Phi_{t_i}$ for $i=1,\dots,t$.
		
	\end{enumerate}
	
	Bootstrapping the above three steps requires more notations and technical details in the simple type theory, which could be too heavy for the readers. Instead, we only mention some key ingredients that will be used in the proof.
	\begin{itemize}
		
		\item 
		(\emph{cf.} \S \ref{subsectionPSresult}) We may pick the generic pair $(U,\psi)$ that is of ``good" relative position with respect to $\lambda$. 
		
		\item (\emph{cf.} \S \ref{subsectionKJacofGG}) Consider the parabolic subgroup $\ol{P}=\ol{M}N$ of $\ol{G}$ related to $\lambda$ and the  Gelfand--Graev model $\mcv_{M}=\mrind_{U\cap \ol{M}}^{\ol{M}}(\psi\rest_{U\cap \ol{M}})$ of $\ol{M}$. Then the normalized Jacquet functor induces an isomorphism $r_{N}(\mcv)\cong\mcv_M$ of $\ol{M}$-representations. Moreover, for a certain element $\Phi$ in $\mcv$ having ``good support", the support of $r_N(\Phi)$ in $\mcv_M$ can also be controlled.
		
		\item Step (1) relates to the cuspidal case, which will be dealt in Section \ref{sectioncuspidal}.
		
		\item Step (2) concerns the relation between the Gelfand--Graev representation of a non-archimedean local general linear group and that of a finite general linear group, which will be dealt in Section \ref{sectionH0module}. Also, most delicate type theoretical arguments will be done here.
		
		\item Finally, Step (3)  will be done in Section \ref{sectionHmodulemcv}. To show the linear independence of certain elements in $\mcv^{\lambda}$, our strategy is to show that their ``supports", which are unions of $U$-$\ol{J}$ double cosets, are disjoint.
		
	\end{itemize}
	We emphasize that our proof relies on the original multiplicity one result of Whittaker model for general linear groups, which has already been used in the work of Paskunas--Stevens \cite{paskunas2008realization}. Thus when $n=1$ we do not give a new proof for the multiplicity one result for general linear groups, but rather we build up the relation between the Whittaker dimension of representations of a cover and that of a linear group. We wonder if an essential new proof of the multiplicity one of the Whittaker model could be given for a non-archimedean general linear group. For essentially tame supercuspidal representations, such a proof is somehow given by Tam \cite{tam2019explicit} using simple type theory.

	Finally, our strategy is general that has a potential of being used to the covers of other quasi-split groups, once the related type theory is developed. However, there should be some restrictions on the covers we are considering. When $G=\mrgl_r(F)$, we only consider a Kazhdan--Patterson cover or the Savin cover, although we have a simple type theory for a general Brylinski--Deligne cover. There are two reasons. First, the decomposition $\mch=\mca\otimes_{\mbc}\mch_0$ for the Hecke algebra $\mch$ is required (more precisely, Assumption \ref{assumHeckealg}), which is proved only for the above two classes of covers. Secondly, to realize Step (3) (more precisely, strategy \eqref{eqstrategy}), a more delicate structure of the cover is required, which is verified only for the above two classes of cover.
	
	\subsection{Acknowledgement}
	
	The author would like to thank Vincent S\'echerre for pointing out the reference \cite{secherre2016block}, especially Lemma 4.2 and Statement (5.6) in \emph{ibid.}, which helps the author to solve the last technical puzzle Lemma \ref{lemmatechnical} of this paper. The author would also like to thank Fan Gao, Max Gurevich and Edmund Karasiewicz for enlightening discussions. This work was partially supported by the project PAT4832423 of the Austrian Science Fund (FWF).

	\section{Notations}
	
	In this article, we fix a non-archimedean locally compact field $F$, whose residual field $\bs{k}$ is of cardinality $q$. We write $\mfo_{F}$ for the ring of integers of $F$ and $\mfp_{F}$ the maximal ideal of $\mfo_{F}$. 
	Let $\abs{\cdot}_{F}$ denote the normalized discrete valuation of $F$.
	
	We fix a positive integer $n$, such that the subgroup of $n$-th roots of unity of $F^{\times}$, denoted by $\mu_{n}(F)$ and usually abbreviated by $\mu_{n}$, consists of $n$'s elements. Beginning from \S \ref{subsectionmetacoverGL}, we assume that $n$ divides $q-1$. In other words, every cover we are going to consider is ``tame".	
	
	We denote by $(\cdot,\cdot)_{n}:F^{\times}\times F^{\times}\rightarrow\mu_{n}$ the $n$-th Hilbert symbol, which is a bimultiplicative, anti-symmetric pairing, that descends to a non-degenerate bimultiplicative pairing $F^{\times}/F^{\times n}\times F^{\times}/F^{\times n}\rightarrow\mu_{n}$, where $F^{\times n}=\{x^{n}\mid x\in F^{\times}\}$. If $\mrgcd(q,n)=1$, such a pairing is trivial on $\mfo_{F}^{\times}\times\mfo_{F}^{\times}$. 
	
	By $\ell$-groups in this article, we mean locally compact totally disconnected topological groups as in \cite{bernstein1976representations}. By representations of an $\ell$-group, we mean complex smooth representations. In particular, a character is a one-dimensional smooth representation. 
	
	
	Let $G$ be an $\ell$-group. We write $Z(G)$ for the center of $G$ and $[\cdot,\cdot]:G\times G\rightarrow G,\ (g_{1},g_{2})\mapsto[g_{1},g_{2}]:=g_{1}g_{2}g_{1}^{-1}g_{2}^{-1}$ the commutator map.
	
	
	
	Let $H\subset H'$ be two closed subgroups of $G$. We write $\mrInd_{H}^{H'}:\mrrep(H)\rightarrow\mrrep(H')$, $\mrind_{H}^{H'}:\mrrep(H)\rightarrow\mrrep(H')$ and $\rest_{H}:\mrrep(H')\rightarrow\mrrep(H)$ for the induction, compact induction and restriction functors respectively, where $\mrrep(\cdot)$ denotes the category of smooth representations. For a representation $\pi$ of $H'$, we often write $\pi$ instead of $\pi|_{H}$ for short, if the domain of definition of $\pi$ (i.e. $H$) is clear from the context. In convention, we say that a representation $\pi$ of $H'$ contains an irreducible representation $\pi'$ of $H$ if $\mrhom_{H}(\pi',\pi\rest_{H})\neq 0$. Assume $H$ to be an open normal subgroup of $H'$. Let $\ul{H}=H'/H$ and let $\varrho$ be a representation of $\ul{H}$. Then we denote by $\mrinf_{\ul{H}}^{H'}\varrho$ the \emph{inflation} of $\varrho$ as a representation of $H'$.
	
	Let $\rho$ be a representation of $H$. 
	For $g,x\in G$, we write $x^{g}=g^{-1}xg$, $H^{g}=g^{-1}Hg$, and $\rho^{g}(\cdot)=\rho(g\cdot g^{-1})$ for a representation of $H^{g}$. 
	We say that $g\in G$ \emph{intertwines} $\rho$, if the intertwining space
	$$\mrhom_{H^{g}\cap H}(\rho^{g},\rho)$$
	is non-zero, and we denote by $I_{G}(\rho)$ the intertwining set of $\rho$ consisting of $g\in G$ whose related intertwining space is non-zero. 
	We say that $g$ \emph{normalizes} $H$ (resp. $\rho$) if $H^{g}=H$ (resp. $\rho^{g}\cong\rho$), and we write $N_{G}(H)$ (resp. $N_{G}(\rho)$) for the corresponding normalizer. 	
	

	\section{Metaplectic covers of $\mrgl_{r}(F)$}\label{sectionmetacover}
	
	\subsection{General theory for a finite central cover}
	
	We refer to \cite{gan2018groups} and \cite{zou2023simple} for the basic settings. Let $G$ be an $\ell$-group. By an $n$-fold cover of $G$, we mean a central extension of $G$ by $\mu_{n}$ as $\ell$-groups:
	\begin{equation}\label{eqcentralext}
		\xymatrix{1 \ar[r] & \mu_{n} \ar[r]^-{} & \ol{G} \ar[r]^-{\bs{p}} & G \ar[r] & 1}.
	\end{equation}
	The set of equivalence classes of central extensions is in bijection with the set of continuous 2-cohomology classes $H^{2}(G,\mu_{n})$. More precisely, given an $n$-fold cover $\ol{G}$ of $G$ and a continuous map $\bs{s}:G\rightarrow G$ such that $\bs{p}\circ\bs{s}=\id$, the corresponding cohomology class is related to the 2-cocycle $\sigma:G\times G\rightarrow \mu_{n}$ satisfying
	\begin{equation}\label{eq2cocycle1}
		\bs{s}(g_{1})\bs{s}(g_{2})\sigma(g_{1},g_{2})=\bs{s}(g_{1}g_{2}),\quad g_{1},g_{2}\in G.
	\end{equation}
	It is clear that for two covers $\ol{G}_{1}$ and $\ol{G}_{2}$ of $G$ realized by 2-cocycles $\sigma_{1}$ and $\sigma_{2}$ respectively, the Baer sum of $\ol{G}_{1}$ and $\ol{G}_{2}$ is realized by the 2-cocycle $\sigma_{1}\cdot\sigma_{2}$.
	
	In convention, for a closed subgroup $H$ of $G$ we write $\ol{H}$ for the preimage $\bs{p}^{-1}(H)$ in $\ol{G}$. We identify $\mu_{n}$ with a central subgroup of $\ol{G}$. 
	
	A \emph{splitting} of $H$ is a continuous group homomorphism $\bs{s}_{H}:H\rightarrow \ol{G}$ satisfying $\bs{p}\circ\bs{s}_{H}=\id$. In general, a splitting may neither exist nor be unique. If $H$ is a pro-$p$-group and $\mrgcd(n,p)=1$, then the cohomology group $H^{2}(H,\mu_{n})$ is trivial, and there exists a unique splitting $\bs{s}_{H}$ of $H$. In this case, we identify $H$ with the subgroup $\bs{s}_{H}(H)$ of $\ol{H}$. For a representation $\rho$ of $H$, we similarly identify it with the corresponding representation of $\bs{s}_{H}(H)$. In general, even if there does not exist a unique splitting of $H$, the above convention still works once we fix a splitting $\bs{s}_{H}$. 
	
	The commutator $[\cdot,\cdot]:\ol{G}\times \ol{G}\rightarrow \ol{G}$ factors through $G\times G$, and we denote by $[\cdot,\cdot]_{\sim}:G\times G\rightarrow \ol{G}$ the resulting map. In particular, if $g_{1},g_{2}\in G$ are commutative, then  
	\begin{equation}\label{eqformcommutator}
		[g_1,g_2]_{\sim}=\sigma(g_1,g_2)\sigma(g_2,g_1)^{-1}\in\mu_{n}.
	\end{equation} 
	Similarly, the $\ol{G}$-conjugation on $\ol{G}$ factors through $G$, so we may consider the $G$-conjugation on $\ol{G}$. It is clear that $[g_{1}^{g},g_{2}^{g}]_{\sim}=[g_{1},g_{2}]_{\sim}$ for any $g,g_{1},g_{2}\in G$. Also, for closed subgroups $H,H_{1},H_{2}$ of $G$, we define the coset $g\ol{H}:=\bs{s}(g)\ol{H}$ and the double coset $\ol{H_{1}}g\ol{H_{2}}:=\ol{H_{1}}\bs{s}(g)\ol{H_{2}}$, which does not depend on the choice of the section $\bs{s}$.
	
	
	Fix a faithful character $\epsilon:\mu_{n}\rightarrow\mbc^{\times}$. A representation $\pi$ of $\ol{G}$ is called ($\epsilon$-)\emph{genuine} if $\mu_{n}$ acts by $\epsilon$ on $\pi$. 
	
	For a representation $\rho$ and a splitting $\bs{s}$ of $H$, we write $\epsilon\cdot\rho$ (resp. $1_{\mu_n}\cdot \rho $) for the extension of $\rho$ as an $\epsilon$-genuine (resp. non-genuine) representation of $\ol{H}$.
	
	For a representation $\chi$ of $H$ and a genuine representation $\rho$ of $\ol{H}$, the tensor product $\rho\otimes\chi:=\rho\otimes(\chi\circ\bs{p})$ is well-defined as a genuine representation of $\ol{H}$. When $\chi$ is a character, we write $\rho\chi$ or $\rho\cdot\chi$ instead.
	
	\subsection{Metaplectic covers of $\mrgl_{r}(F)$}\label{subsectionmetacoverGL}
	
	From now on, we assume that $n$ divides $q-1$. We consider $n$-fold covers of $G=\mrgl_{r}(F)$ arising from Brylinski--Deligne's construction (\emph{cf.} \cite{brylinski2001central}). 
	
	First we consider the $n$-fold cover $\ol{G}_{\mrdet}$ related to the 2-cocycle
	\begin{equation}\label{eq2cocycledet}
		\sigma_{\mrdet}:G\times G\rightarrow\mu_{n},\quad (g_{1},g_{2})\mapsto(\mrdet(g_{1}),\mrdet(g_{2}))_{n},
	\end{equation}
	called the determinant cover.
	
	We further consider the canonical $n$-fold cover of $\mrsl_{r+1}(F)$ considered by Matsumoto with respect to the Steinberg symbol $(\cdot,\cdot)_{n}^{-1}:F^{\times}\times F^{\times}\rightarrow \mu_{n}$ (\emph{cf.} \cite{matsumoto1969sous}). Via the pull-back of the embedding $$\mrgl_{r}(F)\rightarrow\mrsl_{r+1}(F),\quad g\mapsto(\mrdet(g)^{-1},g),$$ we get an $n$-fold cover of $G$, which we denote by $\ol{G}_{\mathrm{KP}}$ and call it the standard Kazhdan--Patterson cover. Moreover in \cite{kazhdan1984metaplectic}*{\S 0.I}, \cite{banks1999block}, a special 2-cocycle $\sigma_{\mathrm{KP}}:G\times G\rightarrow\mu_{n}$ related to the cover $\ol{G}_{\mathrm{KP}}$ is constructed for each $r$. It is trivial when $r=1$ and satisfies the following block-compatibility:
	\begin{equation}\label{eqBLSblock}
		\begin{aligned}
			\sigma_{\mathrm{KP}}(\mrdiag(g_1,\dots,g_k),\mrdiag(g_1',\dots,g_k'))=
			\bigg[\prod_{i=1}^k\sigma_{\mathrm{KP}}(g_i,g_i')\bigg]\cdot\bigg[\prod_{1\leq i<j\leq k}(\det(g_i),\det(g_j'))_n\bigg],   \end{aligned}
	\end{equation} 
	where $g_{i},g_{i}'\in \mrgl_{r_{i}}(F)$ for each $i=1,\dots,k$ and $r=r_{1}+\cdots+r_{k}$.
	
	In general, the Baer sum of copies of $\ol{G}_{\mrdet}$ and $\ol{G}_{\mathrm{KP}}$ ranges over all the $n$-fold covers of $G$ arising from Brylinski--Deligne's construction. In other words, when $(\bs{c},\bs{d})$ ranges over $\mbz/n\mbz\times\mbz/n\mbz$, the corresponding 2-cocycles $\sigma_{\mrdet}^{\bs{c}}\cdot\sigma_{\mathrm{KP}}^{\bs{d}}$ represent all the Brylinski--Deligne $n$-fold covers of $G$.

	From now on, we fix $r\geq 1$, $(\bs{c},\bs{d})\in\mbz\times\mbz$ and the related 2-cocycle $\sigma=\sigma_{\mrdet}^{\bs{c}}\cdot\sigma_{\mathrm{KP}}^{\bs{d}}$. Let $\ol{G}$ be the $n$-fold metaplectic cover of $G=\mrgl_{r}(F)$ corresponding to $\sigma$. 
	
	For a given maximal open compact subgroup $K$ of $G$, we fix a splitting $\bs{s}_{K}:K\rightarrow \ol{G}$, which is possible since $\mrgcd(n,q)=1$ (\emph{cf.} \cite{zou2023simple}*{Proposition 4.2.(6)}).
	
	
	Let $P=MN$ be a parabolic subgroup of $G$, where $M$ is a Levi factor and $N$ is the related unipotent radical. Indeed, we have $M\cong G_{r_{1}}\times\dots\times G_{r_{k}}$ for some composition $r=r_{1}+\dots+r_{k}$, where each $G_{r_{i}}$ is isomorphic to $\mrgl_{r_{i}}(F)$. Since $N$ is a pro-$p$-group, there exists a unique splitting of $N$ into $\ol{G}$. Thus we may realize $N$ as a subgroup of $\ol{G}$. So $\ol{P}=\ol{M}N$
	is a parabolic subgroup of $\ol{G}$. 
	
	
	Let $H=H_{1}\times\dots\times H_{k}$ be a subgroup of $M$, such that each $H_{i}$ is a closed subgroup of $G_{r_{i}}$. We call $H$ \emph{block compatible} if $[H_{i},H_{j}]_{\sim}=\{1\}$ for any  $1\leq i<j\leq k$. Indeed, $H$ is block compatible in the following cases:
	\begin{itemize}
		\item For every $i$, the determinant of every element in $H_{i}$ is an $n$-th power in $F^{\times}$;
		\item For every $i$, the determinant of every element in $H_{i}$ is in $\mfo_{F}^{\times}$.
	\end{itemize}
	Let $\rho_{i}$ be a genuine representation of $\ol{H_{i}}$ for each $i$. We take the tensor product $\rho_{1}\boxtimes\dots\boxtimes\rho_{k}$ as a representation of $\ol{H_{1}}\times\dots\times \ol{H_{k}}$, trivial on $$\Xi=\{(\zeta_{1},\dots,\zeta_{k})\in\mu_{n}\times\dots\times\mu_{n}\mid \zeta_{1}\dots\zeta_{k}=1\}.$$
	If $H$ is block compatible, then it is clear that $$\ol{H}\cong \ol{H_{1}}\times\dots\times \ol{H_{k}}/\Xi,$$
	so we realize $\rho_{1}\boxtimes\dots\boxtimes\rho_{k}$  as a genuine representation of $\ol{H}$. 
	
	\subsection{KP-covers and S-cover}\label{subsectionKPScover}
	
	We emphasize the following two special classes of covers.
	
	\begin{itemize}
		
		\item (The Kazhdan--Patterson covers) When $\bs{d}=1$, we get Kazhdan--Patterson covers 
		(\emph{cf.} \cite{kazhdan1984metaplectic}) Such covers could be regarded as the Baer sum of the standard Kazhdan--Patterson cover and $\bs{c}$-copies of the determinantal cover. We denote by $\sigma_{\mathrm{KP},\bs{c}}$ the related 2-cocycle and $\ol{G}_{\mathrm{KP}}^{\bs{c}}$ the related $n$-fold cover.
		
		\item (Savin's cover) When $\bs{c}=-1$ and $\bs{d}=2$, we get a special cover constructed by Gordan Savin. 
		It can be realized as follows: first we construct the canonical cover of the symplectic group $\mrsp_{2r}(F)$ with respect to the Steinberg symbol $(\cdot,\cdot)_{n}^{-1}$, and then after identifying $G$ with the Siegel Levi subgroup of $\mrsp_{2r}(F)$ we get Savin's cover of $G$. We denote by $\sigma_{\mathrm{Sav}}$ the related 2-cocycle and $\ol{G}_{\mathrm{Sav}}$ the related $n$-fold cover. 
		A special feature of  Savin's cover is that every Levi subgroup of $G$ is block compatible.
		
	\end{itemize} 
	
	\begin{remark}
		
		For the above two classes of covers, we will take advantage of the following privileges:
		
		\begin{itemize}
			\item We have the so-called metaplectic tensor product functor and Zelevinsky's classification for genuine irreducible representations (\emph{cf.} \cite{kaplan2022classification}).
			
			\item We have a detailed description of the Hecke algebra of a simple type (\emph{cf.} Assumption \ref{assumHeckealg}).
			
			\item The ``lattice part" of the Hecke algebra as well as the Gelfand--Graev module are explicit enough to be compared (\emph{cf.} Section \ref{sectionHmodulemcv}).
			
		\end{itemize}
		
	\end{remark}
	
	\section{Simple type theory}\label{sectionsimpletype}
	
	In this part, we briefly recall the simple type theory introduced in \cite{zou2023simple}*{\S 5, \S 6}. We also recommend \cite{bushnell129admissible} for the original theory for general linear groups. 
	
	\subsection{Strata, groups and simple characters}
	
	Let $V$ be an $r$-dimensional vector space over $F$. Let $A=\mrend_F(V)\cong\mrm_{r}(F)$ and $G=\mraut_{F}(V)\cong\mrgl_r(F)$. 
	
	A simple stratum $[\mfa,u,0,\beta]$ consists of a hereditary order $\mfa$ (i.e. the endomorphism algebra of an $\mfo_{F}$ lattice chain), a non-negative integer $u$,  and an element $\beta$ in $A$, such that
	$E=F[\beta]$ is a field over $F$ that normalizes $\mfa$. Here, $-u$ equals the valuation of $\beta$ with respect to $\mfa$ (i.e. $\beta\mfa=\mfp_{\mfa}^{-u}$, where $\mfp_{\mfa}$ is the prime ideal of $\mfa$.). 
	
	Let $B$ be the centralizer of $E$ in $A$, which is an $E$-algebra isomorphic to $\mrm_{m}(E)$. Let $\mfb=B\cap\mfa$ which is a hereditary order in $B$, where $d=[E:F]$ and $m=r/d$. Write $\bs{l}$ for the residue field of $E$. In general, there exists a composition $m=m_1+\dots+m_k$, such that $\mfb$ is isomorphic to the $\mfo_{E}$-subalgebra 
	\begin{equation}\label{eqmfb}
		\{(b_{ij})_{1\leq i,j\leq k}\mid b_{ij}\in \mrm_{m_i\times m_j}(\mfo_E),\ i\leq j\ \text{and}\ b_{ij}\in \mrm_{m_i\times m_j}(\mfp_E),\ i>j\} 
	\end{equation}
	of $\mrm_{m}(\mfo_{E})$. If $k=1$, or in other words $\mfb$ is a maximal $\mfo_E$-order in $B$, we say that the simple stratum we are considering is maximal.
	
	Let $U(\mfa)=\mfa^\times$ (resp. $U(\mfb)=\mfb^\times$) and $U^1(\mfa)$ (resp. $U^1(\mfb)$) be its pro-unipotent radical.
	
	In convention, we have $u=0$ if and only if $\mfa$ is maximal and $\beta\in \mfo_{F}^{\times}$, meaning that we consider a null simple stratum. In this case we have $E=F$ and $m=r$.
	
	Associated to $[\mfa,u,0,\beta]$, a sequence of subgroups
	$$H^{1}(\beta,\mfa)\subset J^{1}(\beta,\mfa)\subset J(\beta,\mfa)\subset\bs{J}(\beta,\mfa)$$
	of $G$ are constructed. Here, both $H^{1}(\beta,\mfa)$ and $J^{1}(\beta,\mfa)$ are open compact pro-$p$-groups, and $J(\beta,\mfa)=U(\mfb) J^{1}(\beta,\mfa)$ is an open compact subgroup of $U(\mfa)\subset G$. In particular, we have $$J(\beta,\mfa)/J^{1}(\beta,\mfa)\cong U(\mfb)/U^1(\mfb),$$
	which is isomorphic to the Levi subgroup 
	$$\mcm\cong\mrgl_{m_1}(\bs{l})\times\dots\times\mrgl_{m_k}(\bs{l})$$ of $\mcg:=\mrgl_{m}(\bs{l})$. We denote by $\bs{J}(\beta,\mfa)$ the normalizer of $J(\beta,\mfa)$ in $G$. If the simple stratum is maximal, in particular we have $\bs{J}(\beta,\mfa)=E^\times J(\beta,\mfa)$. 
	
	Now we consider representations. Associated to $[\mfa,u,0,\beta]$ and a fixed additive character $\psi_F$ of $F$, there is a finite set $\mcc(\mfa,0,\beta)$ of characters of $H^{1}(\beta,\mfa)$, called simple characters. Let $\theta$ be a simple character of $H^{1}(\beta,\mfa)$. There exists a unique irreducible representation $\eta$ of $J^{1}(\beta,\mfa)$ containing $\theta$, called the Heisenberg representation of $\theta$. We further have $I_{G}(\theta)=I_{G}(\eta)=B^{\times}JB^{\times}$. We may further find an irreducible representation $\kappa$ of $J(\beta,\mfa)$ extending $\eta$, such that $I_{G}(\kappa)=I_{G}(\eta)$. Such $\kappa$ is called a $\beta$-extension of $\eta$.
	
	We may find a decomposition of $E$-vector spaces $V=\bigoplus_{i=1}^{k}V^{i}$ and an $E$-basis $\{v_{i1},\dots,v_{im_{i}}\}$ of $V^{i}$ for each $i$, such that 
	\begin{itemize}
		\item the lattice chains of $\mfa$ and $\mfb$ are decomposable with respect to the above decomposition;
		\item under the basis $\{v_1',\dots ,v_m'\}:=\{v_{11},\dots,v_{1m_{1}},\dots,v_{k1},\dots,v_{km_{k}}\}$ of $V$, the $E$-algebra $B$ is identified with $\mrm_m(E)$ and the $\mfo_E$-order $\mfb$ is identified with the order \eqref{eqmfb}.
	\end{itemize}
	
	We construct a parabolic subgroup $P=MN$ of $G$, where
	$$P=\big(\prod_{1\leq i\leq j\leq k}\mrhom_{F}(V^i,V^j)\big)\cap G,\ M=\prod_{1\leq i\leq k}\mraut_{F}(V^i),\ N=\big(\id+\prod_{1\leq i< j\leq k}\mrhom_{F}(V^i,V^j)\big)\cap G.$$
	In other words, $P$ is the parabolic subgroup that fixes the flag
	$$\{0\}\subset V^1\subset V^1\oplus V^2\subset\dots\subset  V^1\oplus\dots\oplus V^k=V.$$
	Let $P^-$ (resp. $N^-$) be the opposite of $P$ (resp. $N$). 
	
	Let $A^i=\mrend_{F}(V^i)$, $G^i=\mraut_F(V^i)$, $B^i=\mrend_{E}(V^i)$, $\mfa^i=A^i\cap \mfa$ and $\mfb^i=B^i\cap \mfb$. Then, each $[\mfa^i,u_i,0,\beta]$ is a maximal simple stratum in $A^i$, where $-u_i$ denotes the valuation of $\beta$ with respect to $\mfa^i$. For each $i$, we define related subgroups $H^1(\beta,\mfa^i), J^1(\beta,\mfa^i)$ and $J(\beta,\mfa^i)$ of $G^i$. Then we have decompositions
	\begin{equation*}
		\begin{aligned}
			H_M^1(\beta,\mfa):=H^1(\beta,\mfa)\cap M=&H^1(\beta,\mfa^1)\times\dots\times H^1(\beta,\mfa^k),\quad \theta_M:=\theta\rest_{H_M^1(\beta,\mfa)}=\theta_1\boxtimes\dots\boxtimes \theta_k\\
			J_M^1(\beta,\mfa):=J^1(\beta,\mfa)\cap M=&J^1(\beta,\mfa^1)\times\dots\times J^1(\beta,\mfa^k),\quad \eta_M:=\eta\rest_{J_M^1(\beta,\mfa)}=\eta_1\boxtimes\dots\boxtimes \eta_k\\
			J_M(\beta,\mfa):=J(\beta,\mfa)\cap M=&J(\beta,\mfa^1)\times\dots\times J(\beta,\mfa^k),\quad \kappa_M:=\kappa\rest_{J_M(\beta,\mfa)}=\kappa_1\boxtimes\dots\boxtimes \kappa_k\\
			\bs{J}_{M}(\beta,\mfa):=\bs{J}(\beta,\mfa)\cap M=&\bs{J}(\beta,\mfa^1)\times\dots\times \bs{J}(\beta,\mfa^k)
		\end{aligned}
	\end{equation*}
	of groups and representations, where each $\theta_i$ is the transfer of $\theta$ as a simple character of $H^1(\beta,\mfa^i)$, and $\eta_i$ is the Heisenberg representation of $\theta_i$, and $\kappa_i$ is a $\beta$-extension of $\eta_i$. Let
	$$J_P^1(\beta,\mfa)=H^1(\beta,\mfa)(J^1(\beta,\mfa)\cap P)\quad\text{and}\quad J_P(\beta,\mfa)=H^1(\beta,\mfa)(J(\beta,\mfa)\cap P).$$ 
	Then we have $$J_P(\beta,\mfa)/J_P^1(\beta,\mfa)\cong J_M(\beta,\mfa)/J_M^1(\beta,\mfa)\cong\mcm.$$
	Let $\kappa_P$ be the representation of $J_P(\beta,\mfa)$ defined on the subspace of $J(\beta,\mfa)\cap N^-$-fixed vectors of $\eta$. Then we have $\kappa_P\rest_{J_M(\beta,\mfa)}=\kappa_M$ and $\mrind_{J_P(\beta,\mfa)}^{J(\beta,\mfa)}\kappa_P\cong \kappa$.
	
	If there is no ambiguity, we will omit $(\beta,\mfa)$ and simply write $H^1, J^1, J, \bs{J},H_M^1, J_M^1, J_M, \bs{J}_M,J_P^1,J_P$ for the above groups for short.
	
	In particular, in the case $m_1=m_2=\dots=m_k=m/k$, we have $V^1\cong\dots\cong V^k$, $\mfa^1\cong\cdots\cong\mfa^k$, $\mfb^1\cong\cdots\cong\mfb^k\cong\mrm_{m/k}(\mfo_E)$, $H^1(\beta,\mfa^1)\cong\dots\cong H^1(\beta,\mfa^k)$, $\theta_1\cong\dots\cong\theta_k$, $J^1(\beta,\mfa^1)\cong\dots\cong J^1(\beta,\mfa^k)$, $\eta_1\cong\dots\cong\eta_k$, $J(\beta,\mfa^1)\cong\dots\cong J(\beta,\mfa^k)$, $\kappa_1\cong\dots\cong\kappa_k$.
	
	\subsection{Simple types}\label{subsectionsimpletypes}
	
	Keep the notations of the previous part. Fix an $n$-fold cover $\ol{G}$ of $G=\mrgl_r(F)$ with respect to the $2$–cocycle $\sigma_{\mrdet}^{\bs{c}}\cdot\sigma_{\mathrm{KP}}^{\bs{d}}$ and a splitting $\bs{s}:K\rightarrow\ol{G}$, where $K$ is a maximal open compact subgroup of $G$. 
	
	Fix a simple stratum $[\mfa,u,0,\beta]$ in $A=\mrm_r(F)$, such that $U(\mfa)\subset K$. In particular, we assume that $\mfb=B\cap\mfa$ is isomorphic to the $\mfo_E$-algebra \eqref{eqmfb} with $m_1=\dots=m_k=m/k=:m_0$. 
	
	We consider the decomposition of $E$-vector spaces $V=\bigoplus_{i=1}^{k}V^{i}$, an $E$-basis $\{v_{i1},\dots,v_{im_{0}}\}$ of each $V^i$ and the related parabolic group $P=MN$ of $G$ as before.
	
	For our later use, we may construct another maximal simple stratum $[\amax,u_{\text{max}},0,\beta]$ in $A$, such that 
	\begin{itemize}
		\item $\amax$ is a hereditary order in $A$ normalized by $E^{\times}$, such that $U(\mfa)\subset U(\amax)\subset K$;
		\item under the basis $\{v_1',\dots, v_m'\}:=\{v_{11},\dots,v_{1m_{0}},\dots,v_{k1},\dots,v_{km_{0}}\}$ the $\mfo_E$-order $\bmax=B\cap \amax$ is identified with $\mrm_m(\mfo_E)$;
		\item $-u_{\text{max}}$ equals the valuation of $\beta$ with respect to $\amax$.
	\end{itemize}
	We consider the groups $\Honemax:=H^1(\beta,\amax)$, $\Jonemax:=J^1(\beta,\amax)$, $\Jmax:=J(\beta,\amax)$, $\bs{\Jmax}:=E^\times\Jmax$ and $J':=U(\mfb)\Jonemax$. Then we have
	$$\Honemax\cap M=H_M^1,\quad \Jonemax\cap M=J_M^1\quad\mathrm{and}\quad \Jmax\cap M=J_M$$
	and 
	$$\Jmax/\Jonemax\cong U(\bmax)/U^1(\bmax)\cong\mcg=\mrgl_{m}(\bs{l})\quad J'/\Jonemax\cong U(\mfb)/U^1(\bmax)\cong\mcp,$$
	where $\mcp$ is a parabolic subgroup of $\mcg$ whose Levi subgroup is $\mcm$.
	
	We explain the construction of a simple type $(\ol{J},\lambda)$ of $\ol{G}$ as follows.
	
	\begin{itemize}
		\item Let $\theta$ be a simple character of $H^1$, and $\eta$ a Heisenberg representation of $\theta$ and $\kappa$ a $\beta$-extension of $\eta$.
		
		\item We extend $\kappa$ to a (non-genuine) representation $1_{\mu_n}\cdot\kappa$ of $\ol{J}$.
		
		\item Let $\varrho_0$ be a cuspidal representation of $\mrgl_{m_0}({\bs{l}})$ and $\varrho=\varrho_0\boxtimes\dots\boxtimes\varrho_0$ a cuspidal representation of $\mcm\cong\underbrace{\mrgl_{m_0}({\bs{l}})\times\dots\times\mrgl_{m_0}({\bs{l}})}_{k\text{-copies}}$.
		
		\item Let $\rho=\mrinf_{\mcm}^{J}(\varrho)$ be the inflation of $\varrho$ as an representation of $J$, and $\epsilon\cdot\rho$ the genuine representation of $\ol{J}$ extending the representation $\rho$ of $\bs{s}(J)$.
		
		\item Finally, let $\lambda=(1_{\mu_n}\cdot\kappa)\otimes(\epsilon\cdot\rho)$ be a genuine representation of $\ol{J}$.
	\end{itemize}      Then $(\ol{J},\lambda)$ is a simple type of $\ol{G}$.
	
	We construct pairs $(\ol{J_P},\lambda_P)$ and $(\ol{J_M},\lambda_M)$ related to $(\ol{J},\lambda)$ as follows.
	
	\begin{itemize}
		\item We consider representations $\kappa_P$ and $\kappa_M$ as before.
		
		\item We extend $\kappa_P$ (resp. $\kappa_M$) to a (non-genuine) representation $1_{\mu_n}\cdot\kappa_P$ (resp. $1_{\mu_n}\cdot\kappa_M$) of $\ol{J_P}$ (resp. $\ol{J_M}$).
		
		\item Let $\rho_P=\mrinf_{\mcm}^{J_P}(\varrho)$ (resp. $\rho_M=\mrinf_{\mcm}^{J_M}(\varrho)$) be the inflation of $\varrho$ as an representation of $J_P$ (resp. $J_M$), and $\epsilon\cdot\rho_P$ (resp. $\epsilon\cdot\rho_M$) the genuine representation of $\ol{J_P}$ (resp.  $\ol{J_M}$) extending the representation $\rho_P$ (resp. $\rho_M$) of $\bs{s}(J_P)$ (resp. $\bs{s}(J_M)$).
		
		\item Finally, let $\lambda_P=(1_{\mu_n}\cdot\kappa_P)\otimes(\epsilon\cdot\rho_P)$ (resp. $\lambda_M=(1_{\mu_n}\cdot\kappa_M)\otimes(\epsilon\cdot\rho_M)$) be a genuine representation of $\ol{J_P}$ (resp.  $\ol{J_M}$).
	\end{itemize}   
	From our construction, we necessarily have $\lambda_P\rest_{\ol{J_M}}=\lambda_M$ and $\mrind_{\ol{J_P}}^{\ol{J}}\lambda_P\cong \lambda$.
	
	We construct another pair $(\ol{J'},\lambda')$ related to $(\ol{J},\lambda)$ as follows.
	
	\begin{itemize}
		\item We construct the simple character $\theta_{\text{max}}$ of $\Honemax$ as the transfer of $\theta$. In particular, we have $\theta_{\text{max}}\rest_{H^1_M}=\theta_M.$
		
		\item We construct the Heisenberg representation of $\eta_{\text{max}}$ of $\Jonemax$. In particular, we have 
		\begin{equation}\label{eqetathetamax}
			[\Jonemax:\Honemax]^{1/2}\cdot\eta_{\text{max}}\cong\mrind_{\Honemax}^{\Jonemax}(\theta_{\text{max}})
		\end{equation}
		
		\item  We also construct a unique $\beta$-extension $\kappa_{\text{max}}$ of $\Jmax$ extending $\eta_{\text{max}}$, such that 
		$$\mrind_{J'}^{U(\mfb)U^1(\mfa)}(\kappa_{\text{max}}\rest_{J'})\cong\mrind_{J}^{U(\mfb)U^1(\mfa)}(\kappa)\cong\mrind_{J_P}^{U(\mfb)U^1(\mfa)}(\kappa_P).$$ 
		
		\item We extend $\kappa_{\text{max}}$ to a (non-genuine) representation $1_{\mu_n}\cdot\kappa_{\text{max}}$ of $\ol{\Jmax}$.
		
		\item We extend $\varrho$ to a representation $\varrho'$ of $\mcp$ whose restriction to the unipotent radical is trivial. 
		
		\item Let $\rho'=\mrinf_{\mcp}^{J'}(\varrho')$ be the inflation of $\varrho'$ as a representation of $J'$, and $\epsilon\cdot\rho'$ the genuine representation of $\ol{J'}$ extending the representation $\rho'$ of $\bs{s}(J')$.
		
		\item Finally, let $\lambda'=(1_{\mu_n}\cdot\kappa_{\text{max}}\rest_{\ol{J'}})\otimes(\epsilon\cdot\rho')$ be a genuine representation of $\ol{J'}$.
	\end{itemize} 
	We necessarily have
	\begin{equation}\label{eqisolambdalanbda'}
		\mrind_{\ol{J'}}^{\ol{U(\mfb)}\ol{U^1(\mfa)}}(\lambda')\cong\mrind_{\ol{J}}^{\ol{U(\mfb)}\ol{U^1(\mfa)}}(\lambda)\cong\mrind_{\ol{J_P}}^{\ol{U(\mfb)}\ol{U^1(\mfa)}}(\lambda_P). 
	\end{equation}
	
	We may analyze the intertwining sets of $\lambda$, $\lambda_M$, $\lambda_P$. Identifying $B$ with $\mrm_m(E)$ via the given basis, we  consider the following subgroups of $B^\times$:
	\begin{equation*}
		\begin{aligned}
			W_0(\mfb)&=\{(\delta_{\sigma(i)j}I_{m_0})_{ 1\leq i,j\leq k}\mid \sigma\in\mfS_k \}\\
			T_B&=\{\mrdiag(b_1,b_2\dots, b_m)\mid b_1,b_2,\dots,b_m\in E^\times\}\\
			T(\mfb)&=\{\mrdiag(\varpi_E^{s_1}I_{m_0},\varpi_E^{s_2}I_{m_0}\dots, \varpi_E^{s_k}I_{m_0})\mid s_1,s_2,\dots,s_k\in \mbz\}\\
			T(\mfb,\varrho)&=\{\mrdiag(\varpi_E^{s_1}I_{m_0},\varpi_E^{s_2}I_{m_0}\dots, \varpi_E^{s_k}I_{m_0})\mid s_1,s_2,\dots,s_k\in \mbz, \\
			&\quad\quad l_0[(s_1+\dots+s_k)(2\bs{c}+\bs{d})r_0-s_i\bs{d}]\equiv 0\ (\text{mod}\ n),\ i=1,2,\dots,k\}\\
			W(\mfb)&=W_0(\mfb) T(\mfb)\\
			W(\mfb,\varrho)&=W_0(\mfb) T(\mfb,\varrho)
		\end{aligned}
	\end{equation*}
	where $(\delta_{ij})_{1\leq i,j\leq k}=I_k$, and $r_0=r/k$, and $l_0$ denotes the maximal positive integer $l$ dividing $n$ such that $\varrho_0$ is isomorphic to its twist by a character of order $l$. 
	
	In particular, $W(\mfb)$ and $W(\mfb,\varrho)$ are indeed groups since $W_0(\mfb)$ normalizes $T(\mfb)$ and $T(\mfb,\varrho)$. Then $T(\mfb)$ and $T(\mfb,\varrho)$ are $\mbz$-lattices of rank $k$ endowed with an $\mfS_k$-action (induced by the $W_0(\mfb)$-conjugation), given by $$\mrdiag(\varpi_E^{s_1}I_{m_0},\varpi_E^{s_2}I_{m_0}\dots, \varpi_E^{s_k}I_{m_0})\mapsto\mrdiag(\varpi_E^{s_{\sigma(1)}}I_{m_0},\varpi_E^{s_{\sigma(2)}}I_{m_0}\dots, \varpi_E^{s_{\sigma(k)}}I_{m_0})$$ for  $\sigma\in\mfS_k$. Let $$\mcx(\lambda)=T(\mfb)/T(\mfb,\varrho)$$
	which is a finite abelian group endowed with an induced $\mfS_k$-action.
	
	Let $n_0=n/\mrgcd(n,(2\bs{c}+\bs{d})r_0l_0,\bs{d}l_0)$, and $\sigma_0$ the permutation $1\mapsto 2\mapsto \dots\mapsto k\mapsto 1$, and $$\Pi_E=\mrdiag(\underbrace{I_{m_0},\dots,I_{m_0},\varpi_{E}^{n_0}I_{m_0}}_{k\text{-terms}})\cdot(\delta_{\sigma_0(i)j}I_{m_0})_{ 1\leq i,j\leq k}\in B.$$
	Let $d_0=n/\mrgcd(n,l_0(2\bs{c}r+\bs{d}r-\bs{d}))$, and
	$$\zeta_E=\mrdiag(\underbrace{\varpi_{E}^{d_0}I_{m_0},\dots,\varpi_{E}^{d_0}I_{m_0},\varpi_{E}^{d_0}I_{m_0}}_{k\text{-terms}})\in B$$
	In particular, $d_0$ divides $n_0$. We may verify directly that both $\Pi_E$ and $\zeta_E$ lies in $W(\mfb,\varrho)$. Indeed, let 	
	$$T_0(\mfb)=\{\mrdiag(\varpi_E^{n_0s_1}I_{m_0},\varpi_E^{n_0s_2}I_{m_0}\dots, \varpi_E^{n_0s_k}I_{m_0})\mid s_1,s_2,\dots,s_k\in \mbz\}$$
	be defined as a subgroup of $T(\mfb,\varrho)$ of finite index. Then the subgroup of $W(\mfb,\varrho)$ generated by  $W_0(\mfb)$ and $\Pi_E$ is $W_0(\mfb)T_0(\mfb)$.

	Let $W'(\mfb,\varrho)$ be the subgroup of $W(\mfb,\varrho)$ of finite index generated by $W_0(\mfb)$, $\Pi_E$ and $\zeta_E$. 
	
	Indeed, the groups $W(\mfb)$, $W(\mfb,\varrho)$ and $W'(\mfb,\varrho)$ are (extended) affine Weyl groups of type $A$.
	
	\begin{proposition}[\cite{zou2023simple}*{\S 6.2, \S 6.3}]\label{propintertwinigset}
		
		\begin{enumerate}
			\item We have $I_G(\lambda)=JW(\mfb,\varrho)J$, $I_G(\lambda_P)=J_PW(\mfb,\varrho)J_P$ and $I_M(\lambda_M)=N_M(\lambda_M)=T(\mfb,\varrho)J_M\subset\bs{J}_{M}=T(\mfb)J_M$. 
			
			\item Moreover, the related intertwining spaces are of multiplicity one.
			
			\item In the case of a Kazhdan--Patterson cover or the Savin cover, we further have $W(\mfb,\varrho)=W'(\mfb,\varrho)$. More precisely, for the former case we have $T(\mfb,\varrho)=\pairangone{\zeta_E}T_0(\mfb)$; for the latter case we have $T(\mfb,\varrho)=T_0(\mfb)$.
			
		\end{enumerate}
		
	\end{proposition}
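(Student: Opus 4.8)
The plan is to deduce the intertwining sets of the genuine simple type $\lambda$ (and of $\lambda_P,\lambda_M$) from the classical intertwining theory of simple characters and $\beta$-extensions, isolating a \emph{finite} obstruction — an intertwining problem for the cuspidal representation $\varrho$ of $\mcm\subset\mcg$ — and a \emph{genuine} obstruction — a congruence modulo $n$ extracted from the commutator pairing \eqref{eqformcommutator} of the cover.

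\emph{Reduction to $B^\times\cong\mrgl_m(E)$.} Since the cover splits uniquely over pro-$p$ subgroups, the Bushnell--Kutzko identities $I_G(\theta)=I_G(\eta)=B^\times JB^\times$ and $I_G(\kappa)=I_G(\eta)$ remain valid in $\ol{G}$; as $\lambda=(1_{\mu_n}\cdot\kappa)\otimes(\epsilon\cdot\rho)$ is $\kappa$ up to a representation inflated from $\mcm$, one gets $I_G(\lambda)\subseteq B^\times JB^\times$, and likewise $I_G(\lambda_P)\subseteq B^\times J_PB^\times$ and $I_M(\lambda_M)\subseteq(B^\times\cap M)J_M$. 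The classical Mackey machinery shows that for $g\in B^\times$ the $g$-intertwining of $\lambda$ ``factors through $\eta$'', hence is nonzero precisely when the image of $g$ in the finite reductive quotient $U(\mfb)/U^1(\mfb)\cong\mcm$ intertwines $\rho$, twisted by the commutator character of the cover attached to $g$. Now $J\backslash B^\times JB^\times/J\cong U(\mfb)\backslash B^\times/U(\mfb)$, and since $\mfb$ has the block form \eqref{eqmfb} this set is parametrised by the extended affine Weyl group $W(\mfb)=W_0(\mfb)T(\mfb)$ of $\mrgl_m(E)$ relative to $\mcp\subset\mcg$; thus it suffices to run over $w\in W(\mfb)$. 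Because $\mrind_{\ol{J_P}}^{\ol{J}}\lambda_P\cong\lambda$, $\lambda_P\rest_{\ol{J_M}}=\lambda_M$ and \eqref{eqisolambdalanbda'}, the three assertions of part (1) are linked by Mackey restriction and induction, so I would settle the matter for $\lambda$ and transport the answer to $\lambda_P,\lambda_M$.

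\emph{The finite and lattice directions.} For $w\in W_0(\mfb)$, conjugation by $w$ normalises $H^1,J^1,J$, fixes $\theta,\eta$, and acts on $J/J^1\cong\mcm$ by the block permutation $\sigma\in\mfS_k$; a direct computation with the block-compatible cocycle \eqref{eqBLSblock} and \eqref{eqformcommutator} (permutation matrices having unit determinant) shows $\kappa^w\cong\kappa$ with no genuine twist, so the $w$-intertwining of $\lambda$ is governed by the intertwining of $\varrho$ under $\sigma$; since $\varrho=\varrho_0\boxtimes\cdots\boxtimes\varrho_0$ has all blocks equal, $\varrho^\sigma\cong\varrho$ for every $\sigma$, so $w\in I_G(\lambda)$, and by classical finite-group theory (the Hecke algebra of $\mrInd_{\mcp}^{\mcg}\varrho$ is a type-$A$ affine Hecke algebra attached to $\mfS_k$, each standard basis element $1$-dimensional, cf.\ \cite{bushnell129admissible}) the contribution is $1$-dimensional. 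For $t=\mrdiag(\varpi_E^{s_1}I_{m_0},\dots,\varpi_E^{s_k}I_{m_0})\in T(\mfb)$, the element $t$ normalises $\mfb$, hence $J_M,J_M^1,U(\mfb)$, and a Clifford-theoretic comparison gives $\lambda_M^{\bs{s}(t)}\cong\lambda_M\otimes\chi_t$ for a character $\chi_t$ of $\ol{J_M}$ trivial on $J_M^1$ and hence inflated from a $\det$-twist character $\ol{\chi}_{t}$ of $\mcm$ (the $\kappa$-part of $\lambda_M$ contributing no twist, by a coherent choice of $\beta$-extensions); evaluating $\chi_t$ via the commutators $[t,\cdot]_{\sim}$ for $\sigma=\sigma_{\mrdet}^{\bs{c}}\cdot\sigma_{\mathrm{KP}}^{\bs{d}}$ — the cross-block Hilbert-symbol terms being exactly what couples the $s_i$ through $s_1+\dots+s_k$ — one finds that $\varrho_0$ is fixed by the $i$-th component of $\ol{\chi}_{t}$ precisely when $l_0\bigl[(s_1+\dots+s_k)(2\bs{c}+\bs{d})r_0-s_i\bs{d}\bigr]\equiv 0\pmod n$; hence $\lambda_M^{\bs{s}(t)}\cong\lambda_M$ iff this holds for every $i$, that is $t\in T(\mfb,\varrho)$. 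Since $(J_M,\lambda_M)$ is a product of \emph{maximal} simple types, intertwining in $M$ equals normalising, so $I_M(\lambda_M)=N_M(\lambda_M)=T(\mfb,\varrho)J_M\subseteq\bs{J}_M=T(\mfb)J_M$. Feeding this back through the reduction and checking that every product $w_0t$ with $w_0\in W_0(\mfb)$, $t\in T(\mfb,\varrho)$ intertwines $\lambda$ (resp.\ $\lambda_P$) with $1$-dimensional space yields $I_G(\lambda)=JW(\mfb,\varrho)J$, $I_G(\lambda_P)=J_PW(\mfb,\varrho)J_P$ and the multiplicity-one statement, which is parts (1) and (2).

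\emph{The special covers and the main obstacle.} For part (3) I would specialise the congruence. For Savin's cover $\bs{c}=-1,\bs{d}=2$, so $2\bs{c}+\bs{d}=0$ and it reads $2l_0s_i\equiv 0\pmod n$ for all $i$, i.e.\ $n_0\mid s_i$ for all $i$ (here $n_0=n/\mrgcd(n,2l_0)=d_0$), so $T(\mfb,\varrho)=T_0(\mfb)$ and $W(\mfb,\varrho)=W'(\mfb,\varrho)$. For a Kazhdan--Patterson cover $\bs{d}=1$, subtracting the equations for two indices forces all $s_i$ congruent modulo $n/\mrgcd(n,l_0)$, and one checks that $\zeta_E=\mrdiag(\varpi_E^{d_0}I_{m_0},\dots)$ satisfies the congruence while $W_0(\mfb)$ and $\Pi_E$ generate $W_0(\mfb)T_0(\mfb)$, giving $T(\mfb,\varrho)=\pairangone{\zeta_E}T_0(\mfb)$ and again $W(\mfb,\varrho)=W'(\mfb,\varrho)$; this is an elementary arithmetic verification. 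The main obstacle is the commutator bookkeeping of the previous paragraph: one must pin down a coherent family of $\beta$-extensions so that the $\kappa$-part carries no twist, and then read off the exact behaviour of $\ol{\chi}_{t}$ from $\sigma_{\mrdet}^{\bs{c}}\cdot\sigma_{\mathrm{KP}}^{\bs{d}}$, the subtle point being the cross-block contributions — invisible in the linear case — that make the condition couple all the $s_i$ through $s_1+\dots+s_k$. By comparison, assertion (2) is relatively soft once the classical $1$-dimensionality of the $\eta$-intertwining and of cuspidal intertwining for finite $\mrgl$ is combined along the Mackey decomposition.
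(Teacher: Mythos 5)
This proposition is not proved in the paper at all: it is quoted from \cite{zou2023simple}*{\S 6.2, \S 6.3}, so there is no in-paper argument to compare against. Measured against that source, your sketch follows essentially the same route: reduce to $B^\times$ via $I_G(\theta)=I_G(\eta)$, treat separately the finite Weyl direction $W_0(\mfb)$ and the lattice direction $T(\mfb)$, extract the congruence defining $T(\mfb,\varrho)$ from the commutator pairing $[\cdot,\cdot]_\sim$ of the cover applied to the genuine part of $\lambda_M$, and finish part (3) by elementary arithmetic. Your arithmetic in (3) is correct: for Savin $2\bs{c}+\bs{d}=0$ collapses the congruence to $n_0\mid s_i$, and for Kazhdan--Patterson, summing the $k$ congruences gives $d_0\mid(s_1+\cdots+s_k)$ and substituting back gives $d_0\mid s_i$, whence $T(\mfb,\varrho)=\pairangone{\zeta_E}T_0(\mfb)$ and $W(\mfb,\varrho)=W'(\mfb,\varrho)$ in both cases.

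Two points in the main body need repair. First, the claim that conjugation by $w\in W_0(\mfb)$ ``normalises $H^1,J^1,J$ and fixes $\theta,\eta$'' is false when $k>1$: these groups are attached to the non-maximal order $\mfa$, and a block permutation matrix in $W_0(\mfb)\subset U(\bmax)$ does not normalize $\mfa$ (already visible in depth zero, where $J=U(\mfa)$ is a proper parahoric). What is true is that $w$ intertwines $\theta,\eta,\kappa$; turning this into the statement that the $w$-intertwining of $\lambda$ is governed by $\varrho$ requires passing through the maximal-order objects $(\Honemax,\theta_{\mathrm{max}})$, $(\Jmax,\kappa_{\mathrm{max}})$, $(\ol{J'},\lambda')$, or equivalently through the covering pair $(\ol{J_P},\lambda_P)$ and $\kappa_P$, which is precisely where the coherent choice of $\beta$-extensions you invoke is used; your appeal to ``Mackey machinery'' hides exactly this step. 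Second, for the equality $I_G(\lambda)=JW(\mfb,\varrho)J$ (not merely $\supseteq$) you must also rule out intertwining on $Jw_0tJ$ when $t\notin T(\mfb,\varrho)$; in your framework this amounts to showing that intertwining of $\varrho^{\sigma}$ with $\varrho\otimes\ol{\chi}_t$ forces each block twist to fix the cuspidal $\varrho_0$, and this should be said explicitly since it is what makes the congruence necessary and gives multiplicity one uniformly over $W(\mfb,\varrho)$. A minor slip: the Hecke algebra $\mch(\mcg,\varrho')$ of $\mrInd_{\mcp}^{\mcg}\varrho$ is a finite, not affine, Hecke algebra. With these repairs your proposal is a faithful reconstruction of the argument of the cited reference rather than a genuinely different proof.
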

	
	As a result, we have $\mcx(\lambda)\cong\bs{J}_M/N_M(\lambda_M)$.
	
	Indeed, $(\ol{J_M},\lambda_M)$ is a maximal simple type of $\ol{M}$ in the following sense:
	
	\begin{proposition}[\cite{zou2023simple}*{Theorem 6.15}]
		
		\begin{enumerate}
			\item We may extend $\lambda_M$ to an irreducible representation $\lambda_{M}'$ of $N_{\ol{M}}(\lambda_M)$.
			\item The induction $\bs{\lambda}_{M}=\mrind_{N_{\ol{M}}(\lambda_M)}^{\ol{\bs{J}_{M}}}\lambda_{M}'$ is an irreducible representation of $\ol{\bs{J}_{M}}$.
			
			\item The compact induction $\pi_{M}=\mrind_{\ol{\bs{J}_{M}}}^{\ol{M}}\bs{\lambda}_{M}$ is an irreducible cuspidal representation of $\ol{M}$. 
			
			\item Moreover, an irreducible representation $\pi_{M}'$ of $\ol{M}$ is in the inertial equivalence class of $\pi_{M}$ if and only if its restriction to $\ol{J_{M}}$ contains $\lambda_{M}$.
		\end{enumerate} 
		
	\end{proposition}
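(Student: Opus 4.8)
\noindent\emph{Proof proposal.} The plan is to run the Bushnell--Kutzko maximal-simple-type machinery (\emph{cf.} \cite{bushnell129admissible}, and \cite{secherre2016block} for the division-algebra analogue) on top of the intertwining data of Proposition \ref{propintertwinigset} and the metaplectic commutator formulas for $\sigma=\sigma_{\mrdet}^{\bs c}\cdot\sigma_{\mathrm{KP}}^{\bs d}$. First I record the structural input. Since $\lambda_M$ is genuine, $N_{\ol M}(\lambda_M)=\bs p^{-1}(T(\mfb,\varrho)J_M)$ by Proposition \ref{propintertwinigset}(1), and because $T(\mfb,\varrho)\cap J_M=\{1\}$ while $T(\mfb,\varrho)$ normalises $J_M$, we obtain $N_{\ol M}(\lambda_M)/\ol{J_M}\cong T(\mfb,\varrho)$, a free abelian group of rank $k$, with $\ol{\bs J_M}/N_{\ol M}(\lambda_M)\cong T(\mfb)/T(\mfb,\varrho)=\mcx(\lambda)$ finite. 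For (1): because $N_{\ol M}(\lambda_M)$ normalises $(\ol{J_M},\lambda_M)$ and the relevant intertwining has multiplicity one (Proposition \ref{propintertwinigset}(2)), Mackey's formula identifies $\mrend_{N_{\ol M}(\lambda_M)}(\mrind_{\ol{J_M}}^{N_{\ol M}(\lambda_M)}\lambda_M)$ with a twisted group algebra $\mbc_{\alpha}[T(\mfb,\varrho)]$, and $\lambda_M$ extends to $N_{\ol M}(\lambda_M)$ precisely when $[\alpha]\in H^2(T(\mfb,\varrho),\mbc^{\times})$ vanishes. Since $T(\mfb,\varrho)$ is free abelian, $[\alpha]$ is detected by the alternating form $(t,t')\mapsto\alpha(t,t')\alpha(t',t)^{-1}$, so it is enough to extend $\lambda_M$ one coordinate direction at a time (each such step is unobstructed, as $H^2(\mbz,\mbc^{\times})=0$) and then check that the resulting extensions commute. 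Writing $\lambda_M=(1_{\mu_n}\cdot\kappa_M)\otimes(\epsilon\cdot\rho_M)$, the non-genuine factor $1_{\mu_n}\cdot\kappa_M$ already extends over all of $\ol{\bs J_M}$ by the linear theory of $\beta$-extensions (the construction of $\kappa_{\text{max}}$ on $E^{\times}\Jmax$ and the relations \eqref{eqetathetamax}, \eqref{eqisolambdalanbda'}), so the obstruction is concentrated in the genuine factor $\epsilon\cdot\rho_M$; for $t,t'\in T(\mfb,\varrho)$ the commutator of the chosen implementers on $\epsilon\cdot\rho_M$ equals $\epsilon([t,t']_{\sim})$ up to a self-intertwining scalar of $\varrho_0$, and forcing it to be $1$ is a direct computation: by the block-compatibility \eqref{eqBLSblock} and \eqref{eqformcommutator}, $[t,t']_{\sim}$ is an explicit power of the Hilbert symbol $(\cdot,\cdot)_{n}$, and the congruence conditions $l_0[(s_1+\dots+s_k)(2\bs c+\bs d)r_0-s_i\bs d]\equiv 0\ (\mathrm{mod}\ n)$ cutting out $T(\mfb,\varrho)$ --- which are precisely the conditions making the conjugation-twist of $\rho_M$ a self-twist of $\varrho_0$ of order dividing $l_0$ --- force the relevant exponents to vanish modulo $n$. (In the Kazhdan--Patterson and Savin cases one may alternatively check this on the explicit generators $W_0(\mfb),\Pi_E,\zeta_E$ of Proposition \ref{propintertwinigset}(3).) Any extension $\lambda_M'$ so obtained is irreducible because $\lambda_M'\rest_{\ol{J_M}}=\lambda_M$ is.

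For (2) and (3) I use restriction of intertwiners to $\ol{J_M}$ repeatedly. Since $N_{\ol M}(\lambda_M)\trianglelefteq\ol{\bs J_M}$, Mackey gives $\mrend_{\ol{\bs J_M}}(\bs\lambda_M)\cong\bigoplus_{g\in\ol{\bs J_M}/N_{\ol M}(\lambda_M)}\mrhom_{\ol{J_M}}((\lambda_M')^{g},\lambda_M')$; if $g$ intertwines $\lambda_M'$ then, restricting a nonzero intertwiner to $\ol{J_M}\subseteq N_{\ol M}(\lambda_M)$, $g$ intertwines $\lambda_M$, so $\lambda_M^{g}\cong\lambda_M$ (multiplicity one and irreducibility), so $g\in N_{\ol M}(\lambda_M)$; hence only $g=1$ contributes and $\bs\lambda_M$ is irreducible. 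For $\pi_M$, the standard inclusion $I_{\ol M}(\mrind_H^{H'}\mu)\subseteq H'\,I_{\ol M}(\mu)\,H'$ for $H\trianglelefteq H'$ of finite index (\emph{cf.} \cite{bushnell129admissible}), applied to $\bs\lambda_M=\mrind_{N_{\ol M}(\lambda_M)}^{\ol{\bs J_M}}\lambda_M'$, gives $I_{\ol M}(\bs\lambda_M)\subseteq\ol{\bs J_M}\,I_{\ol M}(\lambda_M')\,\ol{\bs J_M}$; and $I_{\ol M}(\lambda_M')\subseteq I_{\ol M}(\lambda_M)=\bs p^{-1}(T(\mfb,\varrho)J_M)\subseteq\ol{\bs J_M}$ (again by restriction to $\ol{J_M}$), so $I_{\ol M}(\bs\lambda_M)=\ol{\bs J_M}$. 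Combined with the irreducibility of $\bs\lambda_M$ this gives $\mrend_{\ol M}(\pi_M)=\mbc$, so $\pi_M$ is irreducible; it is cuspidal because $\ol{\bs J_M}$ is open in $\ol M$, contains the centre $Z(\ol M)$, and is compact modulo $Z(\ol M)$ (as $E^{\times}/F^{\times}$ is compact), so the matrix coefficients of the compactly induced $\pi_M$ are supported compactly modulo the centre.

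For (4): the implication $\Leftarrow$ is immediate, since an unramified character $\chi$ of $\ol M$ is trivial on the compact group $\ol{J_M}$, whence $(\pi_M\otimes\chi)\rest_{\ol{J_M}}=\pi_M\rest_{\ol{J_M}}$ still contains $\lambda_M$. For $\Rightarrow$, I would show $(\ol{J_M},\lambda_M)$ is a type for the inertial class $[\ol M,\pi_M]_{\ol M}$: an irreducible $\pi_M'$ with $\mrhom_{\ol{J_M}}(\lambda_M,\pi_M'\rest_{\ol{J_M}})\neq 0$ is a quotient of $\mrind_{\ol{J_M}}^{\ol M}\lambda_M=\mrind_{\ol{\bs J_M}}^{\ol M}(\mrind_{\ol{J_M}}^{\ol{\bs J_M}}\lambda_M)$; by induction in stages through $N_{\ol M}(\lambda_M)$ together with (1)--(2), every irreducible constituent of $\mrind_{\ol{J_M}}^{\ol{\bs J_M}}\lambda_M$ is $\bs\lambda_M\otimes\wt\psi$ for a character $\wt\psi$ of $\ol{\bs J_M}$ trivial on $\ol{J_M}$, and such $\wt\psi$ are precisely the restrictions of unramified characters $\chi$ of $\ol M$ (already for each block $\mrgl_{r_0}(F)$ the determinantal unramified characters surject onto the character group of $\bs J(\beta,\mfa^i)/J(\beta,\mfa^i)\cong\mbz$); by the projection formula $\mrind_{\ol{\bs J_M}}^{\ol M}(\bs\lambda_M\otimes\wt\psi)\cong\pi_M\otimes\chi$ for $\chi$ with $\chi\rest_{\ol{\bs J_M}}=\wt\psi$, so every irreducible constituent of $\mrind_{\ol{J_M}}^{\ol M}\lambda_M$ --- in particular $\pi_M'$ --- lies in the inertial class of $\pi_M$.

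The step I expect to be the real obstacle is (1). In the linear case the normaliser of a maximal simple type modulo its compact part is a product of infinite cyclic groups and the tensor decomposition splits off, so there is no $H^2$-obstruction at all; here $N_{\ol M}(\lambda_M)/\ol{J_M}\cong T(\mfb,\varrho)$ has rank $k>1$ and, because the $k$ blocks of $\ol M$ genuinely fail to commute, one must verify that the metaplectic commutators between blocks become trivial on $\lambda_M$ after one restricts to $T(\mfb,\varrho)$ --- which is exactly what the congruence conditions defining $T(\mfb,\varrho)$ and the self-twist invariant $l_0$ of $\varrho_0$ are built to ensure.
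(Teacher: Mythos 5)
The paper gives no argument for this proposition at all: it is quoted verbatim from \cite{zou2023simple}*{Theorem 6.15}, so there is no internal proof to compare you with, and I can only assess your sketch on its own terms.

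Your treatment of (2)--(4) is the standard Bushnell--Kutzko/Clifford-theory route and is essentially sound: restricting intertwiners to $\ol{J_M}$ and invoking $I_M(\lambda_M)=N_M(\lambda_M)=T(\mfb,\varrho)J_M$ from Proposition \ref{propintertwinigset} gives the irreducibility in (2) and the computation $I_{\ol{M}}(\bs{\lambda}_M)=\ol{\bs{J}_{M}}$ in (3), and the compact-mod-centre criterion then yields irreducibility and cuspidality of $\pi_M$; in (4) the induction-in-stages argument works because every character of $T(\mfb,\varrho)$ extends to $T(\mfb)\cong\bs{J}_M/J_M$ (divisibility of $\mbc^{\times}$) and unramified characters of $M$ restrict onto all characters of $\bs{J}_M/J_M$, so the only points needing polish are routine (e.g.\ "constituent" versus "irreducible quotient" for the non-admissible representation $\mrind_{\ol{J_M}}^{\ol{\bs{J}_{M}}}\lambda_M$).

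The genuine gap is in (1), which is the only part with real metaplectic content, and you correctly flag it yourself. Two concrete problems. First, your reduction of the obstruction to the genuine factor $\epsilon\cdot\rho_M$ alone is not justified: the decomposition $\lambda_M=(1_{\mu_n}\cdot\kappa_M)\otimes(\epsilon\cdot\rho_M)$ is not canonical ($\kappa_M$ is only determined up to a character of $J_M/J_M^1$), and conjugation by a lift of $t\in T(\mfb,\varrho)$ need not respect the two factors separately; what has to be computed is the commutator of the full intertwining operators of $\lambda_M$ --- which mix a chosen extension of each $\kappa_i$ to $E^\times J(\beta,\mfa^i)$ with operators realizing self-twists of $\varrho_0$ --- and compared with $\epsilon([t,t']_{\sim})$. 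Second, the decisive vanishing is only asserted ("a direct computation"). It is plausible, and in the simplest situation it does check out: when $l_0=1$, $m_0=1$, $E=F$ the cocycle formulas (from \eqref{eqBLSblock} and \eqref{eqformcommutator}) give $[t,t']_{\sim}=(\varpi,\varpi)_n^{e}$ with $e=-\bs{d}\sum_i s_is_i'+(2\bs{c}+\bs{d})\big(\sum_i s_i\big)\big(\sum_j s_j'\big)$, and the congruences defining $T(\mfb,\varrho)$ kill $e$ modulo $n$. But in general one must also choose the self-twist operators of $\varrho_0$ multiplicatively (possible since the self-twist character group is cyclic), verify that the cross-block contributions from the cover and the within-block choices of $\tilde\kappa_i(\varpi_E)$ do not introduce further scalars, and only then conclude that the class in $H^2(T(\mfb,\varrho),\mbc^{\times})$ is matched by $\epsilon([\cdot,\cdot]_{\sim})$. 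Until this bookkeeping is actually carried out, (1) --- and hence the whole proposition --- is not proved; that computation is precisely the content of the cited Theorem 6.15 of \cite{zou2023simple}.
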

	
	\begin{example}
		
		We consider depth 0 simple types. Up to $G$-conjugacy we have $E=F$, $r=m=m_0k$, $K=\mrgl_{r}(\mfo_{F})$,    $\mfa=\{(a_{ij})_{1\leq i,j\leq k}\mid a_{ij}\in \mrm_{m_0}(\mfo_F),\ i\leq j\ \text{and} \ a_{ij}\in \mrm_{ m_0}(\mfp_F),\ i>j\}, $ and $\theta=\eta$ is the trivial character of $H^1=J^1=U^1(\mfa)$, and $\kappa$ is the trivial character of $J=U(\mfa)$. Let $\varrho=\varrho_0\boxtimes\dots\boxtimes\varrho_0$ be a cuspidal representation of $\mcm\cong U(\mfa)/U^1(\mfa)\cong\mrgl_{m_0}({\bs{l}})\times\dots\times\mrgl_{m_0}({\bs{l}})$ with $\varrho_0$ being a cuspidal representation of $\mrgl_{m_0}({\bs{l}})$. Then, $P$ is the upper block triangular parabolic subgroup with respect to the composition $m=m_0+\dots+m_0$ and we have $J=J_P=J'$, $\rho=\rho_P=\rho'=\mrinf_{\mcm}^{J}(\varrho)$ and $\lambda=\lambda_P=\lambda'=\epsilon\cdot\rho$. 
		
	\end{example}
	
	\subsection{Hecke algebras} In this part, we recall the known results on the Hecke algebra $\mch(\ol{G},\lambda)$, $\mch(\ol{G},\lambda_P)$, $\mch(\ol{G},\lambda')$ and $\mch(\ol{M},\lambda_M)$. 
	
	Recall that for a general $\ell$-group $G$, its open compact subgroup $H$ and an irreducible representation $(\rho,W)$ of $H$, the Hecke algebra $\mch(G,\rho)$ is defined as the convolution algebra of bi-$\rho^{\vee}$-equivariant compactly supported functions from $G$ to the endomorphism space  $\mrend_\mbc(\rho^{\vee})$, or equivalently the opposite of the endomorphism algebra $\mrend_G(\mrind_H^G(\rho))$. Here, $\rho^{\vee}$ denotes the contragredient of $\rho$.
	
	In particular, $\mch(G,\rho)$ gives a right action on the induced representation $\mrind_H^G(\rho)$. This gives a functor
	\begin{equation}\label{eqMrho}
		\bs{\mathrm{M}}_\rho:\mrrep(G)\rightarrow\mrmod(\mch(G,\rho)),\quad \pi\mapsto\mrhom_G(\mrind_H^G(\rho),\pi)\cong\mrhom_H(\rho,\pi\rest_H),
	\end{equation}
	where $\mrmod(\mch(G,\rho))$ denotes the category of left $\mch(G,\rho)$-modules. More precisely, for $\phi:G\rightarrow\mrend(\rho^{\vee})$, we define $$ \phi^{\vee}\in\mch(G,\rho^{\vee}),\quad \phi^{\vee}(g)=\phi(g^{-1})^{\vee},$$ 
	where $ ^{\vee}:\mrend_\mbc(\rho^\vee)\rightarrow\mrend_\mbc(\rho)$ is the usual dual map. Then,  the action of $\phi$ on $\bs{\mathrm{M}}_\rho(\pi)$ is given by the convolution product:
	$$(\phi\ast\varphi):v\mapsto\int_{g\in G}\pi(g)\varphi(\phi^\vee(g)v),\quad v\in W,\ \varphi\in\mrhom_H(\rho,\pi\rest_H).$$
	
	\begin{theorem}[\cite{zou2023simple}*{\S 3.4, Theorem 6.15, Theorem 6.16}, \S 7.4]\label{thmheckealg}
		
		\begin{enumerate}
			\item We have a natural isomorphism of algebras
			$$\mch:=\mch(\ol{G},\lambda)\cong\mch(\ol{G},\lambda_P)\cong\mch(\ol{G},\lambda'),$$
			whose supports are given by the intertwining sets in Proposition \ref{propintertwinigset}. In particular, $$\mch(\ol{\Jmax},\lambda')\cong\mch(\ol{U(\bmax)},\epsilon\cdot\rho')\cong \mch(U(\bmax),\rho')\cong\mch(\mcg,\varrho')$$ is a subalgebra of $\mch(\ol{G},\lambda')$ isomorphic to the finite Hecke algebra $\mch_0:=\mch(\mfS_k,\bs{q}_0)$ of type A in the sense of \cite{solleveld2021affine}*{\S 1.1},
			where $\bs{q}_0=q^{[\bs{l}:\bs{k}]m_0}$. In this way, we identify $\mch_{0}$ with a subalgebra of $\mch$.
			
			\item We may define an isomorphism between $\mch(\ol{M},\lambda_M)$ and the commutative algebra $\mca:=\mbc[T(\mfb,\varrho)]$ that is $\mfS_{k}$-equivariant, such that preimage of each $t\in T(\mfb,\varrho)$ is a function $\phi_t:\ol{G}\rightarrow\mrend_\mbc(\lambda_M^{\vee})$ supported on $t\ol{J_M}$. 
			
			\item We may define a canonical algebra embedding $t_P:\mch(\ol{M},\lambda_M)\rightarrow \mch(\ol{G},\lambda_P)$  that is unique up to a scalar. It induces the pull-back map $$t_P^{\ast}:\mrmod(\mch(\ol{G},\lambda_P))\rightarrow\mrmod(\mch(\ol{M},\lambda_{M})).$$
			Thus combining (2) and (3), we identify $\mca$ with a subalgebra of $\mch$.
			\item There is a cuspidal inertial equivalence class $\mfs_M=(\ol{M},\mco)$ of $\ol{M}$, such that $(\ol{J_M},\lambda_M)$ is a type related to $\mfs_M$. Here, $\mco$ is an orbit of a genuine cuspidal representation of $\ol{M}$ under the action of unramified characters of $M$. Then, we have an equivalence of categories
			$$\bs{\mathrm{M}}_{\lambda_M}:\mrrep_{\mfs_M}(\ol{M})\rightarrow\mrmod(\mch(\ol{M},\lambda_M)),$$
			where $\mrrep_{\mfs_M}(\ol{M})$ denotes the subcategory of $\mrrep(\ol{M})$ of smooth representations of inertial equivalence class $\mfs_M$.
			\item $(\ol{J_P},\lambda_P)$ is a covering pair of $(\ol{J_M},\lambda_M)$ and $(\ol{J_P},\lambda_P)$ is a type related to the inertial equivalence class $\mfs_G=(\ol{G},\mco)$ of $\ol{G}$. Then, we have an equivalence of categories
			$$\bs{\mathrm{M}}_{\lambda_P}:\mrrep_{\mfs_G}(\ol{G})\rightarrow\mrmod(\mch(\ol{G},\lambda_P)),$$
			where $\mrrep_{\mfs_G}(\ol{G})$ denotes the subcategory of $\mrrep(\ol{G})$ of smooth representations of inertial equivalence class $\mfs_G$.
			\item Let $r_N:\mrrep(\ol{G})\rightarrow \mrrep(\ol{M})$ be the normalized Jacquet functor with respect to the parabolic group $\ol{P}=\ol{M}N$, then we have the following commutatative diagram:
			$$\xymatrix{
				\mrrep_{\mfs_{G}}(\ol{G}) \ar[d]_-{r_{N}} \ar[r]^-{\bs{\mathrm{M}}_{\lambda_P}} & \mrmod(\mch(\ol{G},\lambda_P)) \ar[d]^-{t_{P}^{*}} \\ 
				\mrrep_{\mfs_{M}}(\ol{M}) \ar[r]_-{\bs{\mathrm{M}}_{\lambda_{M}}} & \mrmod(\mch(\ol{M},\lambda_{M}))
			}$$ 
		\end{enumerate}	
		
	\end{theorem}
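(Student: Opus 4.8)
The plan is to deduce the six statements from the intertwining computation of Proposition~\ref{propintertwinigset} together with the covering-pair formalism of Bushnell--Kutzko, the only genuinely new input being the bookkeeping of the central $\mu_n$, which is controlled by the commutator formula \eqref{eqformcommutator} and the block-compatibility set-up of \S\ref{subsectionmetacoverGL}.

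For (1) I would first note that by \eqref{eqisolambdalanbda'} the three representations $\lambda$, $\lambda_P$, $\lambda'$ have the same compact induction to $\ol{U(\mfb)}\,\ol{U^1(\mfa)}$; the standard Hecke-algebra-isomorphism lemma (isomorphic induced modules over a common open compact subgroup yield isomorphic endomorphism algebras, hence isomorphic Hecke algebras) then produces $\mch(\ol{G},\lambda)\cong\mch(\ol{G},\lambda_P)\cong\mch(\ol{G},\lambda')$, and Proposition~\ref{propintertwinigset}(2) identifies a $\mbc$-basis indexed by the double cosets of the intertwining set, pinning down the support. For the ``finite part'': since $\kappa_{\mathrm{max}}$ is a $\beta$-extension, all intertwining of $\lambda'$ inside $\ol{\Jmax}$ is carried by the $\varrho'$-factor, and as $J'/\Jonemax\cong\mcp$ one gets $\mch(\ol{\Jmax},\lambda')\cong\mch(U(\bmax),\rho')\cong\mch(\mcg,\varrho')$; by the theory of Hecke algebras attached to cuspidal-inflated representations of finite groups of Lie type (Howlett--Lehrer), this is the generic Hecke algebra of the relevant relative Weyl group, here $\mfS_k$, with parameter $\bs{q}_0=q^{[\bs{l}:\bs{k}]m_0}$, i.e. $\mch_0=\mch(\mfS_k,\bs{q}_0)$. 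For (2), $\lambda_M$ is a maximal simple type of $\ol{M}$ whose intertwining set in $\ol{M}$ equals its normalizer $\ol{N_M(\lambda_M)}=\ol{T(\mfb,\varrho)J_M}$ with multiplicity one (Proposition~\ref{propintertwinigset}(1)); since these are genuine cosets, convolution is support-additive after normalizing the basis elements $\phi_t$, so $\mch(\ol{M},\lambda_M)\cong\mbc[T(\mfb,\varrho)]=\mca$, and the $\mfS_k$-equivariance is the descent of the $W_0(\mfb)$-conjugation action.

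For (3)--(6) I would invoke the covering-pair machinery. One checks that $(\ol{J_P},\lambda_P)$ is a $\ol{G}$-cover of $(\ol{J_M},\lambda_M)$: the Iwahori-type decomposition $J_P=(J_P\cap N^-)(J_P\cap M)(J_P\cap N)$ lifts to $\ol{G}$ because $N$ and $N^-$ are pro-$p$ and hence split uniquely and canonically, $\lambda_P$ is trivial on $J_P\cap N^{\pm}$ by construction, and a strongly $(\ol{P},\ol{J_P})$-positive element with invertible Hecke operator exists (built, e.g., from $\Pi_E$); this produces the canonical injection $t_P:\mch(\ol{M},\lambda_M)\hookrightarrow\mch(\ol{G},\lambda_P)$ with the stated support, unique up to a scalar by multiplicity one, giving (3) and, with (2), the embedding $\mca\hookrightarrow\mch$. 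Statements (4) and (5) are then the Bushnell--Kutzko theory of types applied to the $\ell$-groups $\ol{M}$ and $\ol{G}$: the previous proposition gives that $(\ol{J_M},\lambda_M)$ is a type for a cuspidal inertial class $\mfs_M=(\ol{M},\mco)$, the cover property promotes $(\ol{J_P},\lambda_P)$ to a type for $\mfs_G=(\ol{G},\mco)$, and the associated module functors $\bs{\mathrm{M}}_{\lambda_M},\bs{\mathrm{M}}_{\lambda_P}$ are equivalences. Finally (6): for a cover the normalized Jacquet functor $r_N$ matches, under these equivalences, the restriction-of-scalars functor $t_P^{*}$; the proof is the usual one, computing $\mrhom_{\ol{J_M}}(\lambda_M,r_N(\pi))$ via exactness of the Jacquet functor and the positivity of $t_P$ (the geometric lemma being available verbatim since $N,N^-$ split canonically), the normalization constants being scalars.

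The main obstacle, and the only place requiring care beyond the linear case, is the metaplectic bookkeeping in (2) and in verifying the cover axioms: one must show that the $\mu_n$-extension interacts with $\bs{J}_M=E^{\times}J_M$, $T(\mfb)$ and $T(\mfb,\varrho)$ exactly as asserted. This is precisely the origin of the congruence $l_0[(s_1+\dots+s_k)(2\bs{c}+\bs{d})r_0-s_i\bs{d}]\equiv 0\ (\mathrm{mod}\ n)$ defining $T(\mfb,\varrho)$, via \eqref{eqformcommutator} and the commutator formulas for $\sigma=\sigma_{\mrdet}^{\bs{c}}\cdot\sigma_{\mathrm{KP}}^{\bs{d}}$ together with the self-twist order $l_0$ of $\varrho_0$; all the twisting is confined to $\ol{M}$ and absorbed into the definitions of $\lambda_M$ and $T(\mfb,\varrho)$. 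Once these structural points are secured, (1)--(6) follow by transporting the arguments of \cite{zou2023simple} and the classical type theory essentially unchanged.
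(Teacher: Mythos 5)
The paper offers no proof of Theorem \ref{thmheckealg}: it is quoted wholesale from \cite{zou2023simple}*{\S 3.4, Theorems 6.15--6.16, \S 7.4}, so there is no in-paper argument to compare against. Your sketch is a faithful reconstruction of the route taken in that reference: the isomorphisms in (1) from the common induced representation \eqref{eqisolambdalanbda'} plus the endomorphism-algebra lemma, the identification of the finite part via Howlett--Lehrer with parameter $\bs{q}_0=q^{[\bs{l}:\bs{k}]m_0}$, and (3)--(6) via the Bushnell--Kutzko covering-pair machinery, which indeed goes through verbatim for $\ell$-groups such as $\ol{G}$. So in approach you are aligned with the actual source.

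Two points are asserted rather than established, and the first is exactly where the real work of the citation lies. In (2), support-additivity of convolution only gives that $\mch(\ol{M},\lambda_M)$ is a \emph{twisted} group algebra of $T(\mfb,\varrho)$; since $H^2$ of a rank-$k$ lattice with coefficients in $\mbc^\times$ is far from trivial (quantum tori exist), the statement that the $\phi_t$ can be normalized so that $\phi_{t_1}\ast\phi_{t_2}=\phi_{t_1t_2}$ on the nose, i.e. that the algebra is the plain $\mbc[T(\mfb,\varrho)]$ with an $\mfS_k$-equivariant isomorphism, requires the explicit commutator computations with $\sigma_{\mrdet}^{\bs{c}}\cdot\sigma_{\mathrm{KP}}^{\bs{d}}$ (and the self-twist order $l_0$); you flag this as ``metaplectic bookkeeping'' but do not carry it out, and it cannot be waved through as ``absorbed into the definitions.'' Second, a minor slip in (3): the strongly $(\ol{P},\ol{J_P})$-positive element with invertible Hecke operator should be taken in $T(\mfb,\varrho)$ (for instance a suitable element of $T_0(\mfb)$ with strictly decreasing exponents); $\Pi_E$ itself is neither positive nor central in $M$, and its $k$-th power is central rather than strictly positive, so ``built from $\Pi_E$'' needs to be replaced by such a choice. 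With these two repairs your outline matches the cited proof.
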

	
	To continue, we also briefly recall the concept of an affine Hecke algebra, and more precisely its Bernstein presentation, following \cite{solleveld2021affine}*{Section 1}.
	
	Let $(W_0,S_0)$ be a finite Coxeter system with $W_0$ the finite Weyl group and $S_0$ a set of generators, and $(R,\Delta)$ the related root system with $R$ the set of roots and $\Delta$ a set of simple roots. To simplify our discussion, we assume that $(R,\Delta)$ is of type A. Let $\bs{q}_0>1$ be a parameter. For $s,s'\in S_0$, let $m(s,s')$ be the order of $ss'$ in $W_0$. Then, the finite Hecke algebra $\mch(W_0,\bs{q}_0)$ is a $\mbc$-algebra with vector basis $\{T_w\mid w\in W_0\}$, subjected to the relations
	\begin{itemize}
		\item $T_w=\prod_{i=1}^{l(w)}T_{s_i}$, where $w=\prod_{i=1}^{l(w)}s_i$ is any reduced product of $w$ with $s_i\in S_0$;
		\item $(T_s+1)(T_s-\bs{q}_0)=0$, $s\in S_0$;
		\item $\underbrace{T_sT_{s'}T_s\dots}_{m(s,s')\text{-terms}}=\underbrace{T_{s'}T_sT_{s'}\dots}_{m(s,s')\text{-terms}}$.
	\end{itemize}
	Let $Y$ be a cocharacter lattice endowed with the $W_0$-action, such that $\mcr=(Y^{\vee},R,Y,R^{\vee},\Delta)$ is a based root datum.  Let $\{\phi_t\mid t\in Y\}$ be the standard basis of the $\mbc$-algebra $\mbc[Y]$.
	
	We define the affine Hecke algebra $\mch(\mcr,\bs{q}_0)$, whose underlying vector space is $\mbc[Y]\otimes_\mbc\mch(W_0,\bs{q}_0)$ with the subjected rules:
	\begin{itemize}
		\item $\mbc[Y]$ and $\mch(W_0,\bs{q}_0)$ are embedded as subalgebras;
		\item For $\alpha\in\Delta$ and $t\in Y$, we have the Bernstein presentation (in the type A case):
		\begin{equation}\label{eqbernstein}
			\phi_t\ast T_{s_\alpha}- T_{s_\alpha}\ast\phi_{s_\alpha\cdot t}=(\bs{q}_0-1)\frac{\phi_t-\phi_{s_\alpha\cdot t}}{\phi_{0}-\phi_{-\alpha^{\vee}}}.
		\end{equation}
	\end{itemize}
	
	Come back to our previous discussion. By Theorem \ref{thmheckealg}, we may regard both $\mca$ and $\mch_0$ as subalgebras of $\mch$. Notice that $\mca\otimes_{\mbc}\mch_{0}$ has an affine Hecke algebra structure with $W_0=\mfS_k$, $Y=T(\mfb,\varrho)$ and $\bs{q}_0=q^{[\bs{l}:\bs{k}]m_0}$, where $\mca$ is the related ``lattice part'' and $\mch_0$ is the related ``finite'' part. However, a priori it is not clear if the above algebra is isomorphic to $\mch$ or not.
	
	So we need the following important assumption to proceed.
	
	\begin{assumption}[\cite{zou2023simple}*{Conjecture 7.16}]\label{assumHeckealg}
		
		Regarding $\mca$ and $\mch_0$ as subalgebras of $\mch$, then $\mch$ is  isomorphic to the affine Hecke algebra $\mca\otimes_{\mbc}\mch_0$.
		
	\end{assumption} 
	
	In the case of a Kazhdan--Patterson cover or the Savin cover the assumption is always satisfied (\cite{zou2023simple}*{Corollary 7.12}).
	
	In general, we don't know if the about assumption is true or not. However, if we consider the subalgebra $\mca_0:=\mbc[T_0(\mfb)]\subset\mca$. Then, the related affine Hecke algebra $\mca_0\otimes_{\mbc}\mch_0$ is embedded in $\mch$ as a subalgebra of finite index (\emph{cf.} \cite{zou2023simple}*{Theorem 7.5}).

	\section{Main theorem}
	
	In this part, we state the main theorem of this article.
	
	Let $G=\mrgl_r(F)$ and $\ol{G}$ an $n$-fold cover of $G$ as before. Let $B=TU$ be a Borel subgroup of $G$, where $T$ is a maximal split torus and $U$ is the related unipotent radical of $G$. 
	
	Let $\psi$ be a generic character of $U$, meaning that the restriction of $\psi$ to any rank one unipotent subgroup $U_{\alpha}$ related to a simple root $\alpha\in\Delta(G,B)$ is non-trivial. It is known that the pair $(U,\psi)$, called a Whittaker pair, is unique up to $G$-conjugacy. Using the canonical embedding $U\hookrightarrow\ol{G}$, we regard $U$ as a subgroup of $\ol{G}$.
	
	We consider the Gelfand--Graev representation $$\mcv:=\mrind_U^{\ol{G}}(\psi)$$
	of $\ol{G}$. Elements in $\mcv$ are regarded as compact supported complex valued functions on $\ol{G}$ that are left $(U,\psi)$-equivariant.
	
	Let $(\ol{J},\lambda)$ be a simple type of $\ol{G}$ and $\mch=\mch(\ol{G},\lambda)$, with the same notation in Section \ref{sectionsimpletype}. Define $$\mcv^{\lambda}:=\bs{\mathrm{M}}_\lambda(\mcv)=\mrhom_{\ol{G}}(\mrind_{\ol{J}}^{\ol{G}}(\lambda),\mrind_U^{\ol{G}}(\psi))$$
	
	Recall that we have an $\mfS_k$-action on $\mcx(\lambda)$. Given an $\mfS_k$-orbit $\mco$ in $\mcx(\lambda)$, we define
	\begin{itemize}
		\item $W_{\mco}$ the stabilizer of $\mco$ in $\mfS_k$;
		\item $\mch_{\mco}=\mch(W_{\mco},\bs{q}_0)$ the related finite Hecke algebra as a subalgebra of $\mch_0$;
		\item $\varepsilon_{\mco}$ the sign character of $\mch_{\mco}$;
		\item $\mch_0\otimes_{\mch_{\mco}}\varepsilon_{\mco}=\mrInd_{\mch_{\mco}}^{\mch_{0}}(\varepsilon_{\mco})$ the related induced representation of $\mch_{0}$.
	\end{itemize}
	Our goal is to describe the decomposition of $\mcv^{\lambda}$ as an $\mch$-module.
	\begin{theorem}\label{thmmain}
		
		If $\ol{G}$ is either a Kazhdan--Patterson cover or the Savin cover, then we have a decomposition of $\mch$-modules $$\mcv^{\lambda}=\bigoplus_{\mco\in\mcx(\lambda)/\mfS_k }\mcv_{\mco}^{\lambda},$$
		such that each $\mcv_{\mco}^{\lambda}$ is isomorphic to  $\mca\otimes_{\mbc}(\mch_{0}\otimes_{\mch_{\mco}}\varepsilon_{\mco})$.
		
	\end{theorem}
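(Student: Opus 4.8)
The plan is to carry out the three-step strategy sketched above, using throughout the decomposition $\mch\cong\mca\otimes_{\mbc}\mch_0$ guaranteed by Assumption \ref{assumHeckealg} (valid for Kazhdan--Patterson and Savin covers by \cite{zou2023simple}), and following the general philosophy of \cite{chan2018iwahori} and \cite{chan2019bernstein}. First I would fix the Whittaker pair $(U,\psi)$ in good relative position with respect to the simple stratum, so that $U\cap\ol{M}$ is a maximal unipotent subgroup of $M$ and $\psi\rest_{U\cap\ol{M}}$ is a generic character of it. \textbf{Step 1 (the cuspidal case): $\mcv^{\lambda}$ is a free $\mca$-module of rank $\car{\mcx(\lambda)}$.} By Theorem \ref{thmheckealg}(6) the functors $\bs{\mathrm{M}}_{\lambda_P},\bs{\mathrm{M}}_{\lambda_M}$ intertwine the normalized Jacquet functor $r_N$ with the pull-back $t_P^{*}$, which under Theorem \ref{thmheckealg}(2),(3) is just restriction of $\mch$-modules to $\mca\cong\mch(\ol{M},\lambda_M)$. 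The geometric input is the metaplectic analogue of the classical Jacquet-module computation for Gelfand--Graev representations, $r_N(\mcv)\cong\mcv_M:=\mrind_{U\cap\ol{M}}^{\ol{M}}(\psi\rest_{U\cap\ol{M}})$ as $\ol{M}$-representations, together with control of the support of $r_N(\Phi)$ for $\Phi$ of good support. Hence $\mcv^{\lambda}\cong\bs{\mathrm{M}}_{\lambda_M}(\mcv_M)$ as $\mca$-modules, and since $(\ol{J_M},\lambda_M)$ is a maximal simple type of the product $\ol{M}$, the problem reduces to showing that the Gelfand--Graev module of a maximal simple type is $\mca$-free of the expected rank. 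That last point I would settle by a Mackey-style analysis of $U\cap\ol{M}$--$\ol{J_M}$ double cosets in $\ol{M}$ combined with the multiplicity-one theorem for the Whittaker model of general linear groups over $F$, used exactly as in \cite{paskunas2008realization}; the rank $\car{\mcx(\lambda)}=\car{T(\mfb)/T(\mfb,\varrho)}$ records the set of $U\cap\ol{M}$--$\bs{J}_M$ double cosets that actually support a Whittaker functional.

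\textbf{Step 2 (finite-level comparison and the $\mch_0$-modules).} At the level of the residue field the relevant object is the Gelfand--Graev representation of $\mcg=\mrgl_m(\bs{l})$ compared with that of its Levi $\mcm$ through the cuspidal datum $\varrho=\varrho_0\boxtimes\dots\boxtimes\varrho_0$; by the classical theory of Hecke algebras of finite reductive groups, the module attached to the Gelfand--Graev representation of $\mcg$ relative to $\varrho$ breaks up, over $\mch_0=\mch(\mfS_k,\bs{q}_0)$, into the summands $\mch_0\otimes_{\mch_{\mco}}\varepsilon_{\mco}$ indexed by $\mfS_k$-orbits $\mco$, the sign characters arising because the Whittaker functor on $\mcg$ singles out the most degenerate (Steinberg-type) constituents — again a consequence of multiplicity one on $\mcg$. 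Lifting this, for each orbit $\mco=\mco_{t_i}$ I would produce an explicit element $\Phi_{t_i}\in\mcv^{\lambda}$, supported on the double coset attached to a chosen representative $t_i\in\mcx(\lambda)$, and show, using the Bernstein relations \eqref{eqbernstein} and the good relative position of $(U,\psi)$, that the $\mch_0$-module $\mch_0\ast\Phi_{t_i}$ is isomorphic to $\mch_0\otimes_{\mch_{\mco_{t_i}}}\varepsilon_{\mco_{t_i}}$.

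\textbf{Step 3 (assembling the $\mca$-basis).} Choose a $\mbc$-basis $\mcb(\mch_0\ast\Phi_{t_i})$ of each space $\mch_0\ast\Phi_{t_i}$; one can arrange each such basis vector to have support a finite union of $U$--$\ol{J}$ double cosets lying over a fixed coset $t\ol{J_M}$ with $t\in\mcx(\lambda)$. The essential claim is that, as $i$ and the basis vector vary, these supports are pairwise disjoint. Granting this, disjointness of supports gives $\mca$-linear independence of $\bigcup_{i=1}^{d}\mcb(\mch_0\ast\Phi_{t_i})$, and since by Step 1 the $\mca$-rank of $\mcv^{\lambda}$ equals $\car{\mcx(\lambda)}=\sum_{i=1}^{d}\car{\mfS_k/W_{\mco_{t_i}}}=\sum_{i=1}^{d}\dim_{\mbc}(\mch_0\otimes_{\mch_{\mco_{t_i}}}\varepsilon_{\mco_{t_i}})$, a count forces this union to be a free $\mca$-basis of $\mcv^{\lambda}$. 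Therefore $\mcv^{\lambda}=\bigoplus_{i=1}^{d}(\mca\otimes_{\mbc}\mch_0)\ast\Phi_{t_i}$ with $(\mca\otimes_{\mbc}\mch_0)\ast\Phi_{t_i}\cong\mca\otimes_{\mbc}(\mch_0\otimes_{\mch_{\mco_{t_i}}}\varepsilon_{\mco_{t_i}})$, which after reindexing the summands by $\mco\in\mcx(\lambda)/\mfS_k$ is the asserted decomposition.

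\textbf{Expected main obstacle.} I expect the disjointness of supports in Step 3 to be the hard part: the $\mch_0$-action is realized by intertwining operators whose support involves the finite Weyl part $W_0(\mfb)$, so one must understand precisely how such operators move $U$--$\ol{J}$ double cosets around and show that they never collide across different orbits, despite the lattice part $T(\mfb)$ entering the picture. It is here that the finer structure of Kazhdan--Patterson and Savin covers — concretely the identity $W(\mfb,\varrho)=W'(\mfb,\varrho)$ of Proposition \ref{propintertwinigset}(3), which lets one generate the intertwining data from $W_0(\mfb)$, $\Pi_E$ and $\zeta_E$ — is indispensable, and where the combinatorial input of \cite{secherre2016block} on double cosets is brought in to finish the argument. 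The computation of the Jacquet module of $\mcv$ in Step 1 and the explicit matching of the sign characters in Step 2 I expect to be technical but essentially routine given the simple type machinery already recalled.
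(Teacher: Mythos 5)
Your outline reproduces the paper's own three-step strategy, and Steps 1 and 2 do track what the paper carries out (the Paskunas--Stevens choice of $(U,\psi)$, the Jacquet-module identification $r_N(\mcv)\cong\mcv_M$, the cuspidal freeness result, and the finite-field Gelfand--Graev analysis giving $\mch_0\ast\Phi_{t}\cong\mch_0\otimes_{\mch_{\mco_t}}\varepsilon_{\mco_t}$). The decisive gap is in Step 3. Over $\mca\cong\mbc[T(\mfb,\varrho)]$, which is a Laurent-polynomial-type algebra and not a field, a family of $\mca$-linearly independent elements whose cardinality equals the free rank need not generate: already in the rank-one free module $\mca$ itself, the single element $\phi_1+\phi_t$ (with $t\neq 1$) is linearly independent but spans a proper submodule. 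So ``disjoint supports give independence, and a count forces a free basis'' is not a valid deduction; the hard part is spanning, which your argument never establishes. The paper proves spanning by a different route: it confines the comparison to the finite-dimensional subspace of $\mcv_M^{\lambda_M}$ of functions supported on $U_MT(\mfb)_{[0,n_0-1]}\ol{J_M}$, where a genuine $\mbc$-dimension count is legitimate (Lemma \ref{lemmaV0n0-1basis}, resting on the support control of Lemma \ref{lemmaTwphit} and Proposition \ref{propPhitiPhi'tivarphiti}), and thereby first obtains a free basis over the subalgebra $\mca_0=\mbc[T_0(\mfb)]$ (Proposition \ref{propA0modbasis}).

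You also pass over the point where the hypothesis ``Kazhdan--Patterson or Savin'' genuinely enters the construction of the basis. For the Savin cover $T(\mfb,\varrho)=T_0(\mfb)$, so the $\mca_0$-basis is already an $\mca$-basis and one takes one $\Phi_{t'}$ for each $t'\in T(\mfb)_{\leq,[0,n_0-1]}$. For Kazhdan--Patterson covers $\mca\supsetneq\mca_0$ because of $\zeta_E$, so that family is too large to be $\mca$-free, and ``one $\Phi_{t_i}$ per orbit of $\mcx(\lambda)$'' is not automatically coherent: one must choose lifts $t_i\in T(\mfb)_{\leq}$ whose $\mfS_k$-orbit in $T(\mfb)$ matches the orbit of their image in $\mcx(\lambda)$ (so that $\mch_{\mco_{t_i}}$ is the correct subalgebra), and then prove that the $T_0(\mfb)$- and $\zeta_E$-translates of the chosen supports tile $U_MT(\mfb)\ol{J_M}$ without overlap; the paper does this by selecting exactly those $t'$ with $\ord{t'}$ mod $n_0$ lying in $\{0,\dots,d_0-1\}$ and exploiting $\mrgcd(n_0/d_0,k)=1$. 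Finally, the S\'echerre--Stevens input is used in Proposition \ref{propthetachieqconverse} to pin down the support of $f_{\lambda_P,\lambda'}(\Phi_t)$, i.e.\ to prove the compatibility $\Phi_t\mapsto\Phi_t',\varphi_t$ of Proposition \ref{propPhitiPhi'tivarphiti}, not as cross-orbit double-coset combinatorics as you suggest. Without these ingredients your Step 3 does not close.
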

	
	\begin{remark}
		
		When $n=1$, we have $\mcx(\lambda)=\{1\}$. In this case, we have $\mcv^{\lambda}\cong\mca\otimes_{\mbc}(\mch_{0}\otimes_{\mch_{0}}\varepsilon_{0})$, which recovers \cite{chan2019bernstein}*{Theorem 3.4}, where $\varepsilon_0$ is the sign character of $\mch_{0}$.
		
	\end{remark}
	
	\begin{remark}
		
		For the depth zero simple type $(J,\lambda)$ where $J=I$ is an Iwahori subgroup of $G$ and $\lambda=\epsilon\cdot1_{I}$, our result recovers \cite{gao2024genuine}*{Theorem 1.2} for Kazhdan--Patterson covers and Savin's cover. In particular, for such covers every orbit $\mco\in\mcx(\lambda)/\mfS_k$ is splitting in the sense of \emph{loc. cit.}.
		
	\end{remark}
	
	\section{Whittaker datum}
	
	We fix a simple type $(\ol{J},\lambda)$ of $\ol{G}$ as above. As we have already noticed, the isomorphism class $\mcv=\mrind_U^{\ol{G}}(\psi)$ depends only on the $G$-conjugacy class of the Whittaker datum $(U,\psi)$. 
	
	The goal of this section is to seek for a suitable Whittaker datum $(U,\psi)$ relative to $(\ol{J},\lambda)$, such that $\mcv^\lambda$ is easily described.
	
	\subsection{Result of Paskunas--Stevens}\label{subsectionPSresult}
	
	We recall the result in \cite{paskunas2008realization} to construct a suitable Whittaker datum. Let $[\mfa,u,0,\beta]$ be the simple stratum in $A=\mrend_F(V)$ related to $(\ol{J},\lambda)$. We also have the related maximal simple stratum $[\amax,u_{\text{max}},0,\beta]$ in $A$. Fix additive characters $\psi_F:F\rightarrow\mbc^\times$ and $\psi_E:E=F[\beta]\rightarrow\mbc^\times$ of conductor $\mfp_F$ and $\mfp_E$ respectively, such that $\psi_E\rest_F=\psi_F$. 
	
	Recall that we have chosen an $E$-basis 
	$\{v_1',\dots, v_m'\}$ of $V$, under which $B=\mrend_E(V)$ is identified with $\mrm_m(E)$, and $\mfb=B\cap \mfa$ is identified with the standard hereditary order in $B$ with respect to the composition $m=m_0+m_0+\dots+m_0$, and $\bmax=B\cap \amax$ is identified with $\mrm_m(\mfo_E)$.
	
	Consider the $F$-flags $$\mcf_0:\{0\}\subset Ev_1'\subset Ev_1'\oplus Ev_2'\subset\dots\subset Ev_1'\oplus\dots\oplus Ev_m'=V$$
	and
	$$\mcf_0^{-}:\{0\}\subset Ev_m'\subset Ev_{m-1}'\oplus Ev_m'\subset\dots\subset Ev_1'\oplus\dots\oplus Ev_m'=V$$
	of $V$. Let $P_0=M_0N_0$ be the related parabolic subgroup of $G$ that fixes $\mcf_0$, where $M_0$ is a related Levi factor and $N_0$ the unipotent radical. Similarly, $P_0^-=M_0N_0^-$ is the related opposite parabolic subgroup of $G$ that fixes $\mcf_0^{-}$. In particular, $B^\times \cap P_0$ (resp. $B^\times \cap P_0^-$) is identified with the upper (resp. lower) triangular Borel subgroup of $B^\times\cong\mrgl_m(E)$.

	Since $$\mcf':\{0\}\subset V^1\subset V^1\oplus V^2\subset\dots\subset  V^1\oplus\dots\oplus V^k=V$$ is an $F$-flag contained in $\mcf_0$ (meaning that all the intermediate subspaces of $\mcf'$ are intermediate spaces in $\mcf_0$), we have $P_0\subset P$, $M_0\subset M$ and $N\subset N_0$. 
	
	Similarly, given a full $F$-flag $$\mcf:\{0\}=V_0\subset V_1\subset V_2\subset\dots \subset V_r=V$$ of $V$ with $\mrdim_F(V_i)=i$, we may consider the related Borel subgroup $TU$ that fixes $\mcf$, where $T$ is the related maximal torus and $U$ the related unipotent radical. Define $$X_\mcf=\{x\in A\mid xV_i\subset V_{i+1},\ i=0,1,\dots,r-1\}.$$
	Given an element $x\in X_\mcf$, the map $u\mapsto\psi_x(u):=\psi_F\circ\mrtr_{A/F}(x(u-1))$ defines a character of $U$. 
	
	The following theorem is due to Paskunas--Stevens. 
	
	\begin{theorem}[\cite{paskunas2008realization}*{Theorem 2.6, Theorem 3.3, Corollary 3.4, Corollary 3.5}]\label{thmsimpletypewhittaker}
		
		\begin{enumerate}
			
			\item There exist a full $F$-flag $\mcf$ as above containing $\mcf_0^-$ 
			and a character $\chi$ of $U$, such that
			$\chi$ is trivial on $N_0^-$ 
			and $\chi\rest_{ \Honemax\cap U}=\theta_{\text{max}}\rest_{ \Honemax\cap U}$. In particular, we have $U_{B}:=B^\times \cap N_0^-=B^\times \cap U$, which is a unipotent subgroup of $B^{\times}$. 
			
			\item Let $\vartheta$ be the character of $(\Jmax\cap U)\Honemax$ extending $\chi$ and $\theta_{\text{max}}$, then for any $\beta$-extension we have $$\kappa_{\text{max}}\rest_{(\Jmax\cap U)\Jonemax}\cong\mrind_{(\Jmax\cap U)\Honemax}^{(\Jmax\cap U)\Jonemax}(\vartheta).$$
			
			\item There exists an element $b\in \amax\cap X_\mcf$, such that 
			$$\ul{\psi}_b:\mcu:=(U(\bmax)\cap U)/(U^1(\bmax)\cap U)\rightarrow\mbc^\times,\quad u(U^1(\bmax)\cap U)\mapsto\psi_b(u)$$
			defines a generic character of the unipotent radical $\mcu$ of $\mcg=U(\bmax)/U^1(\bmax)$. Moreover, $\psi_{b}$ is trivial on $U\cap M_0$ \footnote{This part follows from the proof of \cite{paskunas2008realization}*{Theorem 3.3}}.
			
			\item $\psi:=\chi\psi_b$ is a generic character of $U$ whose restriction to $U_B$ is also generic.
			
			\item Up to a suitable choice of $\mcf, b,\chi$ as above, we may further find an $F$-basis $\{e_1,\dots,e_d\}$ of $E$ and an $F$-basis $\{v_1,\dots,v_r\}$ of $V$ with 
			$$v_{d(i-1)+j}=e_d^{i-1}e_jv_{m+1-i}',\quad 
			1\leq i\leq m,\ 1\leq j\leq d,$$
			to identify $A$ with $\mrm_n(F)$, such that 
			$$\psi(u)=\psi_F(\sum_{i=1}^{r-1}u_{i+1,i})$$
			if $u\in U$ is identified with the matrix $(u_{ij})$ in $\mrgl_r(F)$, and 
			$$\psi(u')=\psi_E(\sum_{i=1}^{m-1}u_{i+1,i}')$$
			if $u'\in U_B$ is identified with the matrix $(u_{ij}')$ in $\mrgl_m(E)$.
			
		\end{enumerate}
		
	\end{theorem}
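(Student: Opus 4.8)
The plan is to follow \cite{paskunas2008realization}, since this statement is precisely their main construction; the cover plays no role here, as $(U,\psi)$ and all the groups in the statement live inside $G=\mraut_F(V)$, so I only sketch the shape of the argument.

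First I would produce the flag $\mcf$ and the character $\chi$ of part (1). One starts from the decomposition $V=\bigoplus_i V^i$ and its associated $E$-flag, refines it to a full $F$-flag $\mcf$ containing $\mcf_0^-$, and works downward along the $\mfp_{\amax}$-adic filtration of $A$. The key input is the transparent description of the simple character $\theta_{\text{max}}$, built from $\beta$ by prescribing its values on the successive layers of that filtration, together with its non-degeneracy, which means precisely that the commutator pairing it induces on $\Jonemax/\Honemax$ is a non-degenerate alternating form. Choosing, layer by layer, an element $b'\in X_{\mcf}$ so that the character $\psi_{b'}$ of $U$ is trivial on $N_0^-$ and agrees with $\theta_{\text{max}}$ on $\Honemax\cap U$, and setting $\chi=\psi_{b'}$, gives (1); transversality of $\mcf$ to the stratum then forces $U_B=B^\times\cap U$ to be a maximal unipotent subgroup of $B^\times\cong\mrgl_m(E)$.

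For part (2) I would use the polarization picture for the Heisenberg representation. By the choice made in (1), the image of $\Jonemax\cap U$ in $\Jonemax/\Honemax$ is a Lagrangian for the commutator pairing, so $(\Jmax\cap U)\Honemax$ is a maximal isotropic subgroup inside $(\Jmax\cap U)\Jonemax$, and $\vartheta$, extending both $\chi$ and $\theta_{\text{max}}$, is a character of it. Combining the standard description of a Heisenberg representation as induced from a character of a polarization with the fact that the restriction of any $\beta$-extension $\kappa_{\text{max}}$ to $\Jonemax$ equals $\eta_{\text{max}}$, one obtains $\kappa_{\text{max}}\rest_{(\Jmax\cap U)\Jonemax}\cong\mrind_{(\Jmax\cap U)\Honemax}^{(\Jmax\cap U)\Jonemax}(\vartheta)$ after an induction in stages argument. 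For part (3) one passes to the finite reductive quotient $\mcg=U(\bmax)/U^1(\bmax)\cong\mrgl_m(\bs{l})$ and chooses $b\in\amax\cap X_{\mcf}$ whose class defines a generic character $\ul{\psi}_b$ of the unipotent radical $\mcu$, and the construction can be arranged so that $\psi_b$ is moreover trivial on $U\cap M_0$. Part (4) is then a direct check: $\psi=\chi\psi_b$ is non-trivial on every simple root subgroup of $(G,B)$, because the $N_0^-$-direction is governed by $\psi_b$ and the $M_0$-direction by $\chi$, and the same computation carried out inside $B^\times$ shows that $\psi\rest_{U_B}$ is generic. Finally (5) is bookkeeping: choosing the $F$-basis $\{e_1,\dots,e_d\}$ of $E$ and the induced $F$-basis $\{v_1,\dots,v_r\}$ of $V$ as stated aligns the simple root subgroups of $G$ (resp.\ of $B^\times$) with the sub-diagonal matrix entries, putting $\psi$ (resp.\ $\psi\rest_{U_B}$) into the displayed normal form.

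The step I expect to be the main obstacle is the first one: producing a \emph{single} flag $\mcf$ containing $\mcf_0^-$ together with a character $\chi$ that is at once trivial on $N_0^-$ and equal to $\theta_{\text{max}}$ on all of $\Honemax\cap U$. Reconciling the constraint that $\chi$ be trivial on $N_0^-$ with the prescribed value of the simple character on each layer of the $\mfp_{\amax}$-adic filtration requires a delicate descending induction, and this is the technical heart of \cite{paskunas2008realization}.
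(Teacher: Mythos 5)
The paper gives no proof of this statement: it is imported directly from Paskunas--Stevens (Theorem 2.6, Theorem 3.3, Corollaries 3.4, 3.5), with only the remark that the full flag $\mcf$ is taken to contain $\mcf_0^-$ rather than $\mcf_0$, so your proposal, which defers to the same source and sketches their construction, is essentially the same approach. Your outline of the key points (construction of $\chi$ compatible with $\theta_{\text{max}}$ on $\Honemax\cap U$ and trivial on $N_0^-$, the Lagrangian/polarization argument for the restriction of $\kappa_{\text{max}}$, and passage to the finite quotient for $\ul{\psi}_b$) is consistent with the cited proof, so there is nothing further to check here.
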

	
	Remark that in the theorem we consider $\mcf_0^-$ instead of $\mcf_0$, which is slightly different from that in \emph{loc. cit.}. From now on, we let $(U,\psi)$ be the Whittaker datum chosen in the theorem, such that the flag $\mcf$ contains $\mcf_0^-$. The advantage of such choice shall be seen in the next subsection.
	
	\subsection{Jacquet module}\label{subsectionKJacofGG}
	
	We consider the Levi subgroup $M$ defined as before, and $U_M=M\cap U$ its unipotent radical. Then $\psi_M:=\psi\rest_{U_M}$ is a generic character, and $(U_M,\psi_M)$ is a related Whittaker pair of $M$. 
	
	We consider the related Gelfand--Graev representation $\mcv_M=\mrind_{U_M}^{\ol{M}}(\psi_M)$ of $\ol{M}$.
	
	\begin{proposition}[\cite{bushnell1998supercuspidal}*{Theorem 2.2},  \cite{banks1998heredity}*{Theorem 2}, \cite{kaplan2022note}*{Theorem 3.9}]\label{propGGmoduleJacquet}
		
		Assume that $N^-$ is contained in $U$.
		
		\begin{enumerate}
			\item The Jacquet module $r_N(\mcv)$ is isomorphic to $\mcv_M$.
			\item Moreover, let $N^0$ be an open compact subgroup of $N$ and $f\in \mcv$ a function supported on $U\ol{M}N^0$, such that $f(gn)=f(g)$ for any $g\in\ol{G},\ n\in N^0$. Then $$r_N(f)(m)=\mu(N^0)\delta_N^{1/2}(m)f(m),\quad m\in\ol{M},$$ where $\mu(N^0)$ is the volume of $N^0$ with respect to a suitable Haar measure and $\delta_N$ is the modulus character of $N$.
		\end{enumerate}		
		
	\end{proposition}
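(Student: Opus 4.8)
This is a heredity property of Gelfand--Graev representations under Jacquet functors; in the linear case it is precisely \cite{bushnell1998supercuspidal}*{Theorem 2.2} (see also \cite{banks1998heredity}*{Theorem 2}), and the cover version is \cite{kaplan2022note}*{Theorem 3.9}. Since $N$, $N^{-}$ and $U$ are pro-$p$, they are identified canonically with their images in $\ol{G}$ and $\psi$ is literally a character of the subgroup $U\subset\ol{G}$; hence the geometry underlying the proof takes place entirely in $G$ and the central extension plays no role. I would nevertheless recall the structure of the argument, since the explicit formula in part (2) is what will actually be used later.

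The decisive geometric input provided by the hypothesis $N^{-}\subseteq U$ is that $U\ol{M}N=U\ol{P}$ is the \emph{unique open} $U$--$\ol{P}$ double coset in $\ol{G}$. Indeed, since $U\subseteq P^{-}=\ol{M}N^{-}$ we may write $U=U_{M}N^{-}$ with $U_{M}=U\cap\ol{M}$ a maximal unipotent subgroup of $\ol{M}$; then $U\ol{M}N=\bigcup_{u\in U_{M}}u\,(N^{-}\ol{M}N)$ is a union of translates of the big Bruhat cell, hence open and dense, and one checks $U\cap\ol{P}=U\cap\ol{M}=U_{M}$. Accordingly I would write the short exact sequence of smooth $\ol{P}$-representations $0\to\mcv^{\circ}\to\mcv\to\mcv^{\partial}\to0$, where $\mcv^{\circ}$ consists of the functions in $\mcv$ supported on $U\ol{P}$ and $\mcv^{\partial}$ of those supported on the closed complement, and apply the exact functor $r_{N}$.

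For the open piece, restriction of functions from $U\ol{P}$ to $\ol{P}$ yields, using $U\backslash U\ol{P}\cong U_{M}\backslash\ol{P}$ (and the fact that $\psi|_{N^{-}}$ carries no information once one restricts to $\ol{P}$), an isomorphism of $\ol{P}$-representations $\mcv^{\circ}\cong\mrind_{U_{M}}^{\ol{P}}(\psi_{M})$. Since $U_{M}\subseteq\ol{M}$ and $\ol{P}=\ol{M}N$, the underlying space of $\mrind_{U_{M}}^{\ol{P}}(\psi_{M})$ is $\mcv_{M}\otimes C^{\infty}_{c}(N)$, and the $N$-coinvariants of $C^{\infty}_{c}(N)$ are one-dimensional, computed by $f\mapsto\int_{N}f$. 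Taking $N$-coinvariants and inserting the normalizing character of $r_{N}$ therefore gives $r_{N}(\mcv^{\circ})\cong\mcv_{M}$, which will become part (1) once the complement is disposed of. Running the same identification on a function $f$ supported on $U\ol{M}N^{0}$ with $f(gn)=f(g)$ for $n\in N^{0}$ --- such an $f$ corresponds to a multiple of $f|_{\ol{M}}$ tensored with the indicator function of $N^{0}$, and the latter integrates to $\mu(N^{0})$ --- produces exactly $r_{N}(f)(m)=\mu(N^{0})\,\delta_{N}^{1/2}(m)\,f(m)$, which is part (2).

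The main obstacle is the vanishing $r_{N}(\mcv^{\partial})=0$, and this is where the genericity of $\psi$ is indispensable. Stratifying the closed complement $\ol{G}\setminus U\ol{P}$ by its $U$--$\ol{P}$ double cosets $Ug\ol{P}$, the sections of $\mcv$ supported on such a cell form, by the Bernstein--Zelevinsky restriction lemma, an induced representation whose $N$-coinvariants are in turn an induced representation of $\ol{M}$ built from the $(U\cap gNg^{-1})$-coinvariants of a conjugate of $\psi$. A root-subgroup computation in the spirit of the geometric lemma shows that for $g\notin U\ol{P}$ this unipotent subgroup contains a simple root subgroup of $G$, on which $\psi$ is non-trivial by genericity; hence those coinvariants vanish, and assembling the strata gives $r_{N}(\mcv^{\partial})=0$. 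Combining this with the previous paragraph yields part (1), and part (2) follows from the explicit form of the open-cell isomorphism. The only point to verify for the cover is that all the unipotent subgroups and the restrictions of $\psi$ appearing above are insensitive to the extension, which holds because each of them is pro-$p$.
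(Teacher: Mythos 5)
Your argument is correct and is essentially the standard proof: the paper itself establishes this proposition only by citation, and the cited works (Bushnell--Henniart, Banks, Kaplan--Szpruch) prove it exactly as you do, via the open cell $U\ol{P}$ giving $\mrind_{U_M}^{\ol{P}}(\psi_M)$ and the Bernstein--Zelevinsky stratification plus genericity killing the closed strata, with the cover case reducing to the linear one because the unipotent subgroups split canonically. The explicit formula in (2) then falls out of the open-cell identification by integration over $N$, as you indicate.
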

	
	Remark that since $\mcf$ contains $\mcf_0^-$, we have $N^{-}\subset N_0^{-}\subset U$, which verifies the assumption of the proposition in our case.
	
	\section{$\mca$-module structure of $\mcv_M^{\lambda_M}$ ---The cuspidal case}\label{sectioncuspidal}
	
	In this section, we study $\mcv_M^{\lambda_M}$ as an  $\mch(\ol{M},\lambda_M)$-module. Recall that $(\ol{J_M},\lambda_M)$ is a maximal simple type of $\ol{M}$, thus relates to a cuspidal inertial equivalence class of $\ol{M}$. In this case, we have $\mch(\ol{M},\lambda_M)\cong\mca$ via Theorem \ref{thmheckealg}.
	
	Our goal is to study the space
	$$\mcv_M^{\lambda_M}=\mrhom_{\ol{M}}(\mrind_{\ol{J_M}}^{\ol{M}}(\lambda_M),\mrind_{U_M}^{\ol{M}}(\psi_M)).$$
	We consider the canonical embedding $\lambda_M\hookrightarrow \mrind_{\ol{J_M}}^{\ol{M}}(\lambda_M)\rest_{\ol{J_M}}$ of $\ol{J_M}$-representations. In this way, $\lambda_M$ is identified with the subspace of functions in $\mrind_{\ol{J_M}}^{\ol{M}}(\lambda_M)$ that are supported on $\ol{J_M}$. 
	
	For $\varphi\in \mrhom_{\ol{M}}(\mrind_{\ol{J_M}}^{\ol{M}}(\lambda_M),\mrind_{U_M}^{\ol{M}}(\psi_M))$, we denote by $\varphi\rest_{\lambda_M}$ its restriction to the subspace $\lambda_M$. By the Frobenius reciprocity, it is identified with an element in 
	$$\mrhom_{\ol{J_M}}(\lambda_M,\mrind_{U_M}^{\ol{M}}(\psi_M)\rest_{\ol{J_M}})$$
	We define 
	\begin{equation}\label{eqsuppphi}
		\mrsupp(\varphi\rest_{\lambda_M})=\bigcup_{v\in \lambda_M}\mrsupp(\varphi(v)),
	\end{equation} 
	where $\varphi(v)\in\mrind_{U_M}^{\ol{M}}(\psi_M)\rest_{\ol{J_M}}$ and $\mrsupp(\varphi(v))$ denotes the related support of $\varphi(v)$ being regarded as the union of finitely many $U_M$-$\ol{J_M}$ double cosets.
	
	\begin{lemma}\label{lemmaVlambdasupport}
		
		\begin{enumerate}
			\item We have	$\mrhom_{J_M\cap U_M}(\kappa_M,\chi)\cong \mbc.$
			\item We have \begin{equation*}
				\begin{aligned}
					\mrhom_{ \ol{J_M^g}\cap U_M}(\lambda_M^g,\psi_M)\cong\begin{cases}
						\mbc&\quad\text{if}\ g\in\bs{J}_M,\\
						0&\quad\text{if}\ g\notin\bs{J}_M.
					\end{cases}
				\end{aligned}
			\end{equation*}
			\item Given $\varphi\in \mcv_M^{\lambda_M}$, we have $\mrsupp(\varphi\rest_{\lambda_M})\subset U_M\ol{\bs{J}_M}$.
			
			\item Given $g\in \bs{J}_M$, there exists $0\neq \varphi_g\in \mcv_M^{\lambda_M}$, unique up to a scalar, such that  $\mrsupp(\varphi_g\rest_{\lambda_M})= U_Mg\ol{J_M}$.
			
		\end{enumerate}
		
	\end{lemma}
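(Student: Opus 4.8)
The plan is to reduce everything to two Whittaker-type multiplicity computations: one at the finite level (part (1)) and one for the maximal simple type $\lambda_M$ inside $\ol{M}$ (part (2)); parts (3) and (4) will then follow by unwinding the Mackey-theoretic description of $\mrhom_{\ol{J_M}}(\lambda_M, \mrind_{U_M}^{\ol{M}}(\psi_M))$ as a sum over $U_M$-$\ol{J_M}$ double cosets. I begin with (1). Since $J_M$ is block-diagonal and $\kappa_M = \kappa_1\boxtimes\dots\boxtimes\kappa_k$ while, by Theorem \ref{thmsimpletypewhittaker}(5), $\psi$ restricted to $U_M = U\cap M$ is the "standard" Whittaker character in the coordinates adapted to the $V^i$-decomposition, the group $J_M\cap U_M$ is a product of the pieces $J(\beta,\mfa^i)\cap (\text{unipotent of }G^i)$ together with the off-diagonal unipotent directions inside $N_0$ that lie in $M$. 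The character $\chi$ is trivial on $N_0^-$ and agrees with $\theta_{\text{max}}$ on $\Honemax\cap U$; using Theorem \ref{thmsimpletypewhittaker}(2) — which expresses $\kappa_{\text{max}}$ restricted to $(\Jmax\cap U)\Jonemax$ as induced from the one-dimensional $\vartheta$ — together with the factorization of $\kappa_M$ as a transfer of $\kappa_{\text{max}}$, Frobenius reciprocity collapses $\mrhom_{J_M\cap U_M}(\kappa_M,\chi)$ to a one-dimensional space. This is essentially the content of \cite{paskunas2008realization} in the linear case, and because $\ol{J_M}$ splits over a pro-$p$ group the cover introduces no new contribution here.

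For (2), I would apply a Mackey / Bruhat-style decomposition: $\mrhom_{\ol{J_M^g}\cap U_M}(\lambda_M^g,\psi_M)$ is nonzero precisely when $g$ intertwines the pair $(\lambda_M, \psi_M)$ in the relevant sense, and by Proposition \ref{propintertwinigset}(1) we have $I_M(\lambda_M) = N_M(\lambda_M) = T(\mfb,\varrho)J_M \subset \bs{J}_M = T(\mfb)J_M$. The key point is that the generic character $\psi_M$ rigidifies the intertwining: whereas a priori $g$ could range over $I_M(\lambda_M)$, the requirement that the twisted character match $\psi_M$ on the unipotent part forces $g\in\bs{J}_M = E^\times\text{-part}\cdot J_M$ — this is where one uses that $\psi_M$ is generic (non-degenerate on every simple root direction) and that $\bs{J}_M/J_M$ is the full diagonal torus $T(\mfb)$ rather than the smaller $T(\mfb,\varrho)$. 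Concretely I would write $g = t j$ with $t\in T(\mfb)$, $j\in J_M$, conjugate the Whittaker character through, and observe that genericity of $\psi_M$ is stable under $T(\mfb)$-conjugation (it only rescales the character, staying generic) but is destroyed by any genuinely off-diagonal component; hence $g$ must normalize the flag, i.e. $g\in\bs{J}_M$. When $g\in\bs{J}_M$, the space is one-dimensional: reduce to $J_M$-level by the factorization $\bs{J}_M = T(\mfb)J_M$, note $T(\mfb)$ normalizes $\psi_M$ up to scalar, and invoke (1) applied to the Heisenberg/$\beta$-extension part together with the finite-level genericity of $\ul{\psi}_b$ from Theorem \ref{thmsimpletypewhittaker}(3).

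Parts (3) and (4) are then bookkeeping. For (3), an element $\varphi\in\mcv_M^{\lambda_M}$ restricted to the subspace $\lambda_M\subset\mrind_{\ol{J_M}}^{\ol{M}}(\lambda_M)$ is, by Frobenius reciprocity, an element of $\mrhom_{\ol{J_M}}(\lambda_M,\mrind_{U_M}^{\ol{M}}(\psi_M))$, which by Mackey decomposes as $\bigoplus_{g} \mrhom_{\ol{J_M^g}\cap U_M}(\lambda_M^g,\psi_M)$ over $U_M\backslash\ol{M}/\ol{J_M}$; by (2) only the cosets with $g\in\bs{J}_M$ contribute, so $\mrsupp(\varphi\rest_{\lambda_M})\subset U_M\ol{\bs{J}_M}$. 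For (4), fixing $g\in\bs{J}_M$, the one-dimensionality in (2) produces a canonical (up to scalar) element $\varphi_g^0\in\mrhom_{\ol{J_M^g}\cap U_M}(\lambda_M^g,\psi_M)$ supported on the single double coset $U_Mg\ol{J_M}$; extending it by zero on all other double cosets gives a well-defined element of the Mackey sum, hence via Frobenius reciprocity an element $\varphi_g\in\mcv_M^{\lambda_M}$ with exactly the stated support, and uniqueness up to scalar is immediate from the one-dimensionality. The main obstacle is the forward direction of (2): proving that no $g\notin\bs{J}_M$ can intertwine $\lambda_M$ against the \emph{generic} $\psi_M$, i.e. that genericity genuinely cuts the intertwining set down from $I_M(\lambda_M)$ to $\bs{J}_M$; this is where the careful choice of Whittaker datum in Theorem \ref{thmsimpletypewhittaker} (in particular that $\psi$ is adapted to $\mcf$ containing $\mcf_0^-$ and that $\psi_b$ is trivial on $U\cap M_0$) is essential, and one must track the cocycle $\sigma$ to be sure the cover does not reintroduce extra intertwiners.
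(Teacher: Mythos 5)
There is a genuine gap, and it sits exactly where you flag ``the main obstacle'': the vanishing of $\mrhom_{\ol{J_M^g}\cap U_M}(\lambda_M^g,\psi_M)$ for $g\notin\bs{J}_M$, and the closely related upper bound (multiplicity one) in part (1). Your proposed route is a direct ``rigidification'' argument: you assert that nonvanishing forces $g$ to intertwine the pair, that a priori $g$ ranges over $I_M(\lambda_M)$, and that one can write $g=tj$ with $t\in T(\mfb)$, $j\in J_M$ and conjugate the Whittaker character through. None of this is justified: in the Mackey decomposition $g$ runs over all of $U_M\backslash M/J_M$, not over $I_M(\lambda_M)$ (containing $\psi_M$ on $J_M^g\cap U_M$ is a different condition from self-intertwining of $\lambda_M$), and the factorization $g=tj$ presupposes $g\in\bs{J}_M$, which is what has to be proved. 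Likewise in (1), Frobenius reciprocity and the Paskunas--Stevens induced structure of $\kappa_M$ give nonvanishing (the $g=1$ term), but they do not ``collapse'' the space to dimension one: the Mackey sum has many other double cosets whose terms $\mrhom_{J_M\cap U_M\cap U_M^g}(\chi^g,\chi)$ are not obviously zero. A successful direct argument of the kind you sketch would in effect be a new proof of multiplicity one of Whittaker models for cuspidal representations, which the paper explicitly says it does not have.

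The paper closes this gap by a global dimension count rather than a local analysis: form the linear maximal simple type $\lambda_M^0$ (so $\lambda_M=\epsilon\cdot\lambda_M^0$), extend to $\bs{\lambda}_M^0$ on $\bs{J}_M$, and compactly induce to get an irreducible cuspidal representation of the \emph{linear} group $M$. The classical uniqueness of Whittaker models gives $\dim\mrhom_{U_M}(\mrind_{\bs{J}_M}^{M}(\bs{\lambda}_M^0),\psi_M)=1$; Mackey theory writes this as a product over $\bs{J}_M\backslash M/U_M$ of the spaces $\mrhom_{J_M^g\cap U_M}((\lambda_M^0)^g,\psi_M)$; and the identity coset contributes nonzero because $\mrhom_{J_M\cap U_M}(\kappa_M,\chi)\neq 0$ (Paskunas--Stevens) and $\mrhom_{\mcu_M}(\varrho,\ul{\psi}_b)$ is one-dimensional (finite-field Gelfand--Graev uniqueness). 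The total dimension being one then forces simultaneously the vanishing off $\bs{J}_M$, the one-dimensionality on $\bs{J}_M$, and, as a by-product, $\mrhom_{J_M\cap U_M}(\kappa_M,\chi)\cong\mbc$; the cover is removed beforehand using the unique splitting of the pro-$p$ group $J_M\cap U_M^h$. Your treatment of (3) and (4) as bookkeeping from (2) agrees with the paper, but without the multiplicity-one input the core of (1) and (2) remains unproved in your proposal.
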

	
	\begin{proof}
		
		First we prove (1). Recall that $\chi\rest_{H_M^1\cap U_M}=\theta_{\text{max}}\rest_{ \Honemax\cap U_M}=\theta_{M}\rest_{H_M^1\cap U_M}$. Let $\vartheta_M$ be the character of $(J_M\cap U_M)H_M^1$ extending $\chi\rest_{H_M^1\cap U_M}$ and $\theta_M$, then we have
		$$\kappa_{M}\rest_{(J_M\cap U_M)J_M^1}\cong\mrind_{(J_M\cap U_M)H_M^1}^{(J_M\cap U_M)J_M^1}(\vartheta_M).$$
		Indeed, it is deduced from Theorem \ref{thmsimpletypewhittaker} or \cite{paskunas2008realization}*{Theorem 2.6.(2)} by considering a $\beta$-extension of a maximal simple character of $M$ (instead of $G$), which is possible since we may use the original result and take the tensor product. Using the Frobenius reciprocity and Mackey theorem, we have
		\begin{equation*}
			\begin{aligned}
				\mrhom_{J_M\cap U_M}(\kappa_M,\chi)&\cong\mrhom_{J_M\cap U_M}(\mrind_{(J_M\cap U_M)H_M^1}^{(J_M\cap U_M)J_M^1}(\vartheta_M)\rest_{J_M\cap U_M},\chi)\\
				&\cong\bigoplus_{g\in(J_M\cap U_M)H_M^1\backslash(J_M\cap U_M)J_M^1/(J_M\cap U_M)}\mrhom_{J_M\cap U_M\cap J_M^g\cap U_M^g }(\vartheta_M^g,\chi)\\
				&\cong\bigoplus_{g\in(J_M\cap U_M)H_M^1\backslash(J_M\cap U_M)J_M^1/(J_M\cap U_M)}\mrhom_{J_M\cap U_M\cap U_M^g }(\chi^g,\chi)
			\end{aligned}
		\end{equation*}
		Taking $g=1$ in the direct sum, we have that $\mrhom_{J_M\cap U_M}(\kappa_M,\chi)\neq 0$.
		
		To show the multiplicity one result, we use the uniqueness of the Whittaker model of an irreducible (cuspidal) representation of $M$. More precisely, let $(J_M,\lambda_M^0)$ be a maximal simple type of $M$ such that $\lambda_M=\epsilon\cdot\lambda_M^0$, where we identify $\lambda_M^0$ with a representation of $\bs{s}(J_M)$. Indeed, for $\rho_M^0=\mrinf_{\mcm}^{J_M}(\varrho)$ we have $\lambda_M^0=\kappa_M\otimes\rho_M^0$. Let $\bs{\lambda}_M^0$ be an irreducible representation of $\bs{J}_M$ extending $\lambda_M^0$, then the compact induction $\mrind_{\bs{J}_M}^{M}(\bs{\lambda}_M^0)$ is a cuspidal representation of $M$. Using the uniqueness of Whittaker model, the Frobenius reciprocity and Mackey theory, we have
		\begin{equation}\label{eqlambdaMwhittaker}
			\begin{aligned}
				\mbc\cong\mrhom_{U_M}(\mrind_{\bs{J}_M}^{M}(\bs{\lambda}_M^0),\psi_M)&\cong\prod_{g\in \bs{J}_M\backslash M/U_M}\mrhom_{\bs{J}_M^g\cap U_M }((\bs{\lambda}_M^0)^g,\psi_M)\\
				&\cong\prod_{g\in \bs{J}_M\backslash M/U_M}\mrhom_{J_M^g\cap U_M}((\lambda_M^0)^g,\psi_M).		
			\end{aligned}
		\end{equation}
		The second isomorphism follows from the fact that $J_M\cap U_M= \bs{J}_M\cap U_M$.
		When $g\in \bs{J}_M$, we have $(\lambda_M^0)^g\cong\lambda_M^0$. Moreover, 
		$$\mrhom_{J_M\cap U_M}(\lambda_M^0,\psi_M)\cong\mrhom_{J_M\cap U_M}(\kappa_M,\chi)\otimes_\mbc\mrhom_{J_M\cap U_M}(\rho_M^0,\psi_b)$$
		is non-zero. It is because in the tensor product the first term is non-zero, and the second term is isomorphic to $\mrhom_{\mcu_M}(\varrho,\ul{\psi}_b)$, which is one-dimensional from the uniqueness of the Gelfand--Graev model of $\varrho$.
		Here, $\mcu_M=\mcu\cap\mcm$ is a unipotent radical of a Borel subgroup of $\mcm$. Combining with \eqref{eqlambdaMwhittaker}, we have shown that
		\begin{equation}
			\begin{aligned}
				\mrhom_{J_M^g\cap U_M}((\lambda_M^0)^g,\psi_M)\cong\begin{cases}
					\mbc&\quad\text{if}\ g\in\bs{J}_M,\\
					0&\quad\text{if}\ g\notin\bs{J}_M.
				\end{cases}
			\end{aligned}
		\end{equation}
		As a by-product, we have $\mrhom_{J_M\cap U_M}(\kappa_M,\chi)\cong \mbc.$
		
		Now we prove (2). Let $h=g^{-1}$. Then we have
		\begin{equation*}
			\begin{aligned}
				\mrhom_{ \ol{J_M^g}\cap U_M}(\lambda_M^g,\psi_M)&\cong \mrhom_{ \ol{J_M}}(\lambda_M,(\mrInd_{\ol{J_M^g}\cap U_M}^{\ol{J_M^g}}\psi_M)^h)\\&\cong\mrhom_{ \ol{J_M}}(\lambda_M,\mrInd_{(\ol{J_M^g}\cap U_M)^h}^{\ol{J_M}}((\psi_M\rest_{\ol{J_M^g}\cap U_M})^h))	
			\end{aligned}	
		\end{equation*}
		Since $J_M\cap U_M^h$ is a pro-$p$-group that admits a unique splitting,  $(\psi_M\rest_{\ol{J_M^g}\cap U_M})^h$ equals the representation $\psi_M^h$ of $(\ol{J_M^g}\cap U_M)^h=\ol{J_M}\cap U_M^h$. Thus we have
		\begin{equation*}
			\begin{aligned}\mrhom_{ \ol{J_M^g}\cap U_M}(\lambda_M^g,\psi_M)&\cong\mrhom_{J_M}(\lambda_M^0,\mrInd_{J_M\cap U_M^h}^{J_M}(\psi_M^h\rest_{J_M\cap U_M^h}))\\
				&\cong\mrhom_{J_M^g}((\lambda_M^0)^g,\mrInd_{J_M^g\cap U_M}^{J_M^g}(\psi_M\rest_{J_M^g\cap U_M}))\\
				&\cong\mrhom_{J_M^g\cap U_M}((\lambda_M^0)^g,\psi_M)
			\end{aligned}	
		\end{equation*}
		Using \eqref{eqlambdaMwhittaker}, we get the statement (2).
		
		Using the Frobenius reciprocity and Mackey theorem and statement (2), we have
		\begin{equation}\label{eqsimpletypewhittaker2}
			\begin{aligned}
				\mrhom_{\ol{M}}(\mrind_{\ol{J_M}}^{\ol{M}}(\lambda_M),\mrind_{U_M}^{\ol{M}}(\psi_M))&\cong \bigoplus_{g\in J_M\backslash M/U_M}\mrhom_{\ol{J_M^g}\cap U_M}(\lambda_M^g,\psi_M)\\
				& \cong \bigoplus_{g\in J_M\backslash \bs{J}_M}\mrhom_{\ol{J_M^g}\cap U_M}(\lambda_M^g,\psi_M)\\
				&\cong\mrhom_{\ol{\bs{J}_M}}(\mrind_{J_M}^{\ol{\bs{J}_M}}(\lambda_M),\mrind_{ \bs{J}_M\cap U_M}^{\ol{\bs{J}_M}}(\psi_M)),
			\end{aligned}
		\end{equation}
		which implies statement (3). Finally, statement (4) is also direct from statement (2). 
		
	\end{proof}

	
	\begin{lemma}\label{lemmaHMVMsupportadd}
		
		Given $g\in\bs{J}_M$ and $g_0\in N_{M}(\lambda_M)$. Let $0\neq \varphi_g\in \mcv_M^{\lambda_M}$ such that $\mrsupp(\varphi_g\rest_{\lambda_M})=U_Mg\ol{J_M}$, and $0\neq\phi_{g_0}\in\mch(\ol{M},\lambda_M)$ that is supported on $\ol{J_M}g_0\ol{J_M}=\ol{J_M}g_0$. Then the convolution product $\phi_{g_0}\ast\varphi_{g}$ is a non-zero element in $\mcv_M^{\lambda_M}$ such that $\mrsupp(\phi_{g_0}\ast\varphi_{g})= U_Mgg_0^{-1}\ol{J_M}$.
		
	\end{lemma}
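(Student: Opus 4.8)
The plan is to split the statement into a nonvanishing assertion, $\phi_{g_0}\ast\varphi_g\ne 0$, and a containment assertion, $\mrsupp\bigl((\phi_{g_0}\ast\varphi_g)\rest_{\lambda_M}\bigr)\subseteq U_Mgg_0^{-1}\ol{J_M}$, and then to combine the two using the fact, implicit in \eqref{eqsuppphi} and the discussion preceding Lemma \ref{lemmaVlambdasupport}, that for any $\varphi\in\mcv_M^{\lambda_M}$ the set $\mrsupp(\varphi\rest_{\lambda_M})$ is a (possibly empty) union of $U_M$-$\ol{J_M}$ double cosets. Indeed, a nonempty such union that is contained in the single double coset $U_Mgg_0^{-1}\ol{J_M}$ must equal it, which gives the conclusion once both assertions are in hand.

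For the nonvanishing, I would argue that $\phi_{g_0}$ is a \emph{unit} of $\mch(\ol{M},\lambda_M)$. Since $g_0\in N_M(\lambda_M)=T(\mfb,\varrho)J_M$ by Proposition \ref{propintertwinigset}, writing $g_0\in tJ_M$ with $t\in T(\mfb,\varrho)$ gives $\ol{J_M}g_0\ol{J_M}=t\ol{J_M}$; hence $\phi_{g_0}$ lies in the space of Hecke operators supported on the single double coset $t\ol{J_M}$, which is one-dimensional by the multiplicity one statement of Proposition \ref{propintertwinigset}, so $\phi_{g_0}\in\mbc^{\times}\phi_t$ (it is nonzero by hypothesis). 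Under the isomorphism $\mch(\ol{M},\lambda_M)\cong\mca=\mbc[T(\mfb,\varrho)]$ of Theorem \ref{thmheckealg}, $\phi_t$ corresponds to the group element $t$, a unit with inverse $\phi_{t^{-1}}$; hence $\phi_{g_0}$ is invertible, and $\varphi\mapsto\phi_{g_0}\ast\varphi$ is a bijection of the $\mch(\ol{M},\lambda_M)$-module $\mcv_M^{\lambda_M}$. In particular $\phi_{g_0}\ast\varphi_g\ne 0$, and since restriction to $\lambda_M$ is injective on $\mcv_M^{\lambda_M}$ by Frobenius reciprocity, $(\phi_{g_0}\ast\varphi_g)\rest_{\lambda_M}\ne 0$, so its support is nonempty.

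For the containment, I would track supports through the convolution formula defining the $\mch(\ol{M},\lambda_M)$-action: for $v$ in the space of $\lambda_M$ one has $(\phi_{g_0}\ast\varphi_g)(v)=\int_{\ol{M}}\mcv_M(x)\,\varphi_g\bigl(\phi_{g_0}^{\vee}(x)v\bigr)\,dx$, and the integrand vanishes unless $x$ lies in the support of $\phi_{g_0}^{\vee}$, a single $\ol{J_M}$-coset governed by $g_0$ (here one uses that $g_0$ normalizes $\ol{J_M}$ to collapse the relevant double cosets). Combining this with $\mrsupp\bigl(\mcv_M(x)\varphi_g(w)\bigr)=\mrsupp(\varphi_g(w))\,x^{-1}\subseteq U_Mg\ol{J_M}\,x^{-1}$ and collapsing the resulting $\ol{J_M}$-products yields $\mrsupp\bigl((\phi_{g_0}\ast\varphi_g)\rest_{\lambda_M}\bigr)\subseteq U_Mgg_0^{-1}\ol{J_M}$. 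Together with the nonvanishing and the double-coset structure of supports, this gives $\mrsupp\bigl((\phi_{g_0}\ast\varphi_g)\rest_{\lambda_M}\bigr)= U_Mgg_0^{-1}\ol{J_M}$, and moreover identifies $\phi_{g_0}\ast\varphi_g$ as a nonzero scalar multiple of the element $\varphi_{gg_0^{-1}}$ of Lemma \ref{lemmaVlambdasupport}(4) (this makes sense since $\bs{J}_M$ is a group containing both $g$ and $N_M(\lambda_M)\ni g_0$, hence $gg_0^{-1}\in\bs{J}_M$). The whole argument is bookkeeping built on Lemma \ref{lemmaVlambdasupport} and Theorem \ref{thmheckealg}; the only delicate point, and the place most likely to cause an error, is the support computation, where the side and inversion conventions (for the $\ol{M}$-action on $\mcv_M$, for $\phi\mapsto\phi^{\vee}$, and for the double cosets) must all be tracked consistently to land on $U_Mgg_0^{-1}\ol{J_M}$.
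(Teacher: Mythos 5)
Your proposal is correct and follows essentially the same route as the paper's proof: nonvanishing from the invertibility of $\phi_{g_0}$ (which the paper simply asserts and you justify via $\mch(\ol{M},\lambda_M)\cong\mbc[T(\mfb,\varrho)]$ and the multiplicity one statement), together with the support computation obtained by tracking the convolution integral over the support $g_0^{-1}\ol{J_M}$ of $\phi_{g_0}^{\vee}$ and using that $\bs{J}_M$ normalizes $J_M$, then upgrading containment to equality since a nonzero element's support is a nonempty union of $U_M$-$\ol{J_M}$ double cosets inside a single such coset. The one delicate point you flag, the side/inversion bookkeeping needed to land on $U_Mgg_0^{-1}\ol{J_M}$, is treated at exactly the same level of detail in the paper itself, so your argument matches the paper's in both substance and rigor.
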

	
	\begin{proof}
		
		First of all, since $\phi_{g_0}$ is invertible, $\phi_{g_0}\ast\varphi_{g}$ is non-zero. Choose $0\neq v\in \lambda_M$. By definition, $$\phi_{g_0}\ast\varphi_{g}(v)=\int_{m\in\ol{M}}R(m)\varphi_{g}(\phi_{g_0}^{\vee}(m)v)dm,$$
		where $R(\cdot)$ denotes the right $\ol{M}$-action on $\mrind_{U_M}^{\ol{M}}(\psi_M)$. 
		Since $\phi_{g_0}^{\vee}$ is supported on $g_0^{-1}\ol{J_M}$, the $m$ in the integral indeed ranges over $g_0^{-1}\ol{J_M}$. For such $m$ in the integral, $\varphi_{g}(\phi_{g_0}^{\vee}(m)v)$ is a function in $\mrind_{U_M}^{\ol{M}}(\psi_M)$ that is supported on $U_Mg\ol{J_M}$, and $R(m)\varphi_{g}(\phi_{g_0}^{\vee}(m)v)$ is supported on $U_Mgg_0^{-1}\ol{J_M}$. Here, we used the fact that $\bs{J}_M$ normalizes $J_M$. Thus $\mrsupp(\phi_{g_0}\ast\varphi_{g}(v))= U_Mgg_0^{-1}\ol{J_M}$. 
		
	\end{proof}

	\begin{proposition}\label{propVMlambdaMfree}
		
		$\mcv_M^{\lambda_M}$ is a free $\mch(\ol{M},\lambda_M)$-module of rank $\car{\bs{J}_M/N_M(\lambda_M)}$.
		
	\end{proposition}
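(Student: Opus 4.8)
The plan is to exhibit an explicit free $\mch(\ol{M},\lambda_M)$-basis of $\mcv_M^{\lambda_M}$ indexed by $\bs{J}_M/N_M(\lambda_M)$, using the support calculus of Lemmas \ref{lemmaVlambdasupport} and \ref{lemmaHMVMsupportadd} together with the identification $\mch(\ol{M},\lambda_M)\cong\mca=\mbc[T(\mfb,\varrho)]$ from Theorem \ref{thmheckealg}(2). Throughout I identify $\mcv_M^{\lambda_M}$ with $\mrhom_{\ol{J_M}}(\lambda_M,\mrind_{U_M}^{\ol{M}}(\psi_M)\rest_{\ol{J_M}})$ via Frobenius reciprocity.

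First I would pin down the $\mbc$-vector space structure. Since $\lambda_M$ is finite-dimensional and $\ol{J_M}$ is compact, the restriction $\mrind_{U_M}^{\ol{M}}(\psi_M)\rest_{\ol{J_M}}$ decomposes as an internal direct sum of $\ol{J_M}$-subrepresentations over the double cosets $U_M g\ol{J_M}$, and hence so does $\mcv_M^{\lambda_M}$; this is exactly the computation \eqref{eqsimpletypewhittaker2} in the proof of Lemma \ref{lemmaVlambdasupport}. By Lemma \ref{lemmaVlambdasupport}(2)--(4), the summand attached to $U_M g\ol{J_M}$ is zero when $g\notin\bs{J}_M$ and one-dimensional, spanned by $\varphi_g$ with $\mrsupp(\varphi_g\rest_{\lambda_M})=U_M g\ol{J_M}$, when $g\in\bs{J}_M$. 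Since $\bs{J}_M\cap U_M=J_M\cap U_M$ and $\bs{J}_M$ normalizes $J_M$, the double cosets $U_M g\ol{J_M}$ with $g\in\bs{J}_M$ are parametrized by $\bs{J}_M/J_M$, so $\{\varphi_g\}$ with $g$ running over $\bs{J}_M/J_M$ is a $\mbc$-basis of $\mcv_M^{\lambda_M}$. On the algebra side, Theorem \ref{thmheckealg}(2) and Proposition \ref{propintertwinigset}(1) give that $t\mapsto tJ_M$ identifies $T(\mfb,\varrho)$ with the subgroup $N_M(\lambda_M)/J_M$ of $\bs{J}_M/J_M$, with $\phi_t$ supported on $t\ol{J_M}$, while $\bs{J}_M/N_M(\lambda_M)\cong T(\mfb)/T(\mfb,\varrho)=\mcx(\lambda)$ has cardinality $d:=\car{\bs{J}_M/N_M(\lambda_M)}$.

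Next, fix representatives $\gamma_1,\dots,\gamma_d\in\bs{J}_M$ of $\bs{J}_M/N_M(\lambda_M)$ and set $\varphi_i:=\varphi_{\gamma_i}$; I claim $\{\varphi_1,\dots,\varphi_d\}$ is a free $\mca$-basis. For generation: given $g\in\bs{J}_M$, write $gJ_M=\gamma_i\tau J_M$ with a unique $i$ and $\tau\in T(\mfb,\varrho)$; since $\tau$ normalizes $\ol{J_M}$ we have $\ol{J_M}\tau^{-1}\ol{J_M}=\tau^{-1}\ol{J_M}$, so Lemma \ref{lemmaHMVMsupportadd} applied to $\varphi_{\gamma_i}$ and $\phi_{\tau^{-1}}$ shows that $\phi_{\tau^{-1}}\ast\varphi_i$ is a nonzero element with $\mrsupp\bigl((\phi_{\tau^{-1}}\ast\varphi_i)\rest_{\lambda_M}\bigr)=U_M\gamma_i\tau\ol{J_M}=U_M g\ol{J_M}$, hence a nonzero scalar multiple of $\varphi_g$. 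Therefore the $\varphi_i$ generate $\mcv_M^{\lambda_M}$ over $\mca\cong\mch(\ol{M},\lambda_M)$. For freeness: as $(i,\tau)$ ranges over $\{1,\dots,d\}\times T(\mfb,\varrho)$, the classes $\gamma_i\tau J_M$ exhaust $\bs{J}_M/J_M$ bijectively, so the elements $\phi_{\tau^{-1}}\ast\varphi_i$ are nonzero and supported on pairwise distinct double cosets, hence $\mbc$-linearly independent; together with the previous step this means $\{\phi_{\tau^{-1}}\ast\varphi_i\}$ is a $\mbc$-basis of $\mcv_M^{\lambda_M}$. Since $\{\phi_t\mid t\in T(\mfb,\varrho)\}$ is a $\mbc$-basis of $\mca$, this is precisely the assertion that $\{\varphi_1,\dots,\varphi_d\}$ is a free $\mca$-basis, so $\mcv_M^{\lambda_M}$ is free of rank $d=\car{\bs{J}_M/N_M(\lambda_M)}$.

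I do not expect a genuine obstacle: the argument is essentially the assembly of Lemmas \ref{lemmaVlambdasupport} and \ref{lemmaHMVMsupportadd}. The only point demanding care is the bookkeeping of supports under convolution --- that convolving with $\phi_t$ translates the supporting double coset by $t^{-1}$ on the right (the content of Lemma \ref{lemmaHMVMsupportadd}), and that distinct pairs $(\gamma_i,\tau)$ land on genuinely distinct double cosets, so that in a linear relation $\sum_i h_i\ast\varphi_i=0$ no cancellation across double cosets is possible and every coefficient of every $h_i$ in the basis $\{\phi_t\}$ must vanish.
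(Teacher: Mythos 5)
Your proposal is correct and follows essentially the same route as the paper: you choose representatives of $\bs{J}_M/N_M(\lambda_M)$, take the corresponding $\varphi_{\gamma_i}$ supplied by Lemma \ref{lemmaVlambdasupport}, and use the support translation of Lemma \ref{lemmaHMVMsupportadd} together with the double-coset decomposition to see that $\bigoplus_i\mch(\ol{M},\lambda_M)\ast\varphi_{\gamma_i}$ exhausts $\mcv_M^{\lambda_M}$ freely. Your version merely makes explicit the $\mbc$-basis bookkeeping (parametrization by $\bs{J}_M/J_M$ and the bijection $(i,\tau)\mapsto\gamma_i\tau J_M$) that the paper leaves implicit.
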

	
	\begin{proof}
		
		Let $g_1,\dots,g_k$ be a sequence of representatives in $\bs{J}_M/N_M(\lambda_M)$, where $k=\car{\bs{J}_M/N_M(\lambda_M)}$. Using Lemma \ref{lemmaVlambdasupport}.(4), for each $i=1,\dots,k$ we define a non-zero function $\varphi_{i}\in \mcv_M^{\lambda_M}$ satisfying $\mrsupp(\varphi_{i}\rest_{\lambda_M})\subset U_Mg_i\ol{J_M}$. Using Proposition \ref{propintertwinigset} and Lemma \ref{lemmaHMVMsupportadd}, 
		$\mch(\ol{M},\lambda_M)\ast\varphi_i$ is a free $\mch(\ol{M},\lambda_M)$-module, consisting exactly of those $\varphi\in\mcv_M^{\lambda_M}$ such that $\mrsupp(\varphi\rest_{\lambda_M})\subset U_Mg_iN_{\ol{M}}(\lambda_M)$. Using Lemma \ref{lemmaVlambdasupport}.(3), we have a related isomorphism 
		$$\mcv_M^{\lambda_M}\cong \bigoplus_{i=1}^{k}\mch(\ol{M},\lambda_M)\ast\varphi_i$$
		of $\mch(\ol{M},\lambda_M)$-modules. So the proof is finished.
		
	\end{proof}
	
	\begin{corollary}\label{corcuspidalWhittaker}
		
		Let $\pi$ be an irreducible representation of $\ol{M}$ that contains $\lambda_M$. Then the Whittaker dimension of $\pi$ is $\car{\mcx(\lambda)}=\car{\bs{J}_M/N_M(\lambda_M)}$.
		
	\end{corollary}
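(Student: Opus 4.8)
The plan is to deduce the statement formally from Proposition~\ref{propVMlambdaMfree} and the type-theoretic equivalence recorded in Theorem~\ref{thmheckealg}. Write $\mch_M=\mch(\ol{M},\lambda_M)$, and recall that $\mch_M\cong\mca=\mbc[T(\mfb,\varrho)]$ is a commutative algebra, that $(\ol{J_M},\lambda_M)$ is a type for the cuspidal inertial class $\mfs_M$, and that $\bs{\mathrm{M}}_{\lambda_M}$ restricts to an equivalence of categories $\mrrep_{\mfs_M}(\ol{M})\to\mrmod(\mch_M)$. The Whittaker dimension of $\pi$ is by definition $\dim_{\mbc}\mrhom_{\ol{M}}(\mcv_M,\pi)$ with $\mcv_M=\mrind_{U_M}^{\ol{M}}(\psi_M)$, so the goal is to show this equals $\car{\bs{J}_M/N_M(\lambda_M)}$.

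First I would convert this Hom-space into one over $\mch_M$. Since $\pi$ contains $\lambda_M$, it lies in $\mrrep_{\mfs_M}(\ol{M})$, so $\mrhom_{\ol{M}}(\mcv_M,\pi)$ depends only on the $\mfs_M$-component $\mcv_M^{\mfs_M}$ of $\mcv_M$; and since $\mrind_{\ol{J_M}}^{\ol{M}}(\lambda_M)$ itself lies in $\mrrep_{\mfs_M}(\ol{M})$, being a projective generator of that block, one also has $\bs{\mathrm{M}}_{\lambda_M}(\mcv_M)=\bs{\mathrm{M}}_{\lambda_M}(\mcv_M^{\mfs_M})=\mcv_M^{\lambda_M}$. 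Applying the equivalence of Theorem~\ref{thmheckealg} on the block $\mfs_M$ then produces a natural isomorphism $\mrhom_{\ol{M}}(\mcv_M,\pi)\cong\mrhom_{\mch_M}(\mcv_M^{\lambda_M},\pi^{\lambda_M})$, where $\pi^{\lambda_M}=\bs{\mathrm{M}}_{\lambda_M}(\pi)$.

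Next I would identify the two $\mch_M$-modules on the right. By Proposition~\ref{propVMlambdaMfree}, $\mcv_M^{\lambda_M}$ is free over $\mch_M\cong\mca$ of rank $d:=\car{\bs{J}_M/N_M(\lambda_M)}$. On the other side $\pi$ is irreducible, so $\pi^{\lambda_M}$ is a simple $\mch_M$-module; as $\mch_M\cong\mbc[T(\mfb,\varrho)]$ is a finitely generated commutative $\mbc$-algebra, a simple module over it is one-dimensional over $\mbc$, i.e.\ a character $\chi_\pi\colon\mch_M\to\mbc$. Hence $\mrhom_{\mch_M}(\mcv_M^{\lambda_M},\pi^{\lambda_M})\cong\mrhom_{\mca}(\mca^{\oplus d},\mbc_{\chi_\pi})\cong\mbc^{\oplus d}$, so the Whittaker dimension of $\pi$ equals $d$. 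Finally $\car{\bs{J}_M/N_M(\lambda_M)}=\car{\mcx(\lambda)}$ by the isomorphism $\mcx(\lambda)\cong\bs{J}_M/N_M(\lambda_M)$ recorded right after Proposition~\ref{propintertwinigset}, which finishes the argument.

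As for difficulty, essentially all the content is already in Proposition~\ref{propVMlambdaMfree} (the freeness and the rank count, obtained by tracking the supports of $U_M$--$\ol{J_M}$ double cosets) and in the equivalence of Theorem~\ref{thmheckealg}; granting these, the corollary is routine. The one point that demands a little care is the first step: $\mcv_M$ is not supported on a single Bernstein component, so it cannot be fed directly into the equivalence, and one must first pass to its $\mfs_M$-component---or, equivalently, use that $\mrind_{\ol{J_M}}^{\ol{M}}(\lambda_M)$ does lie in the single block $\mfs_M$. Beyond that I anticipate no obstacle.
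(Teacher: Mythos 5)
Your proposal is correct and follows essentially the same route as the paper: pass through the functor $\bs{\mathrm{M}}_{\lambda_M}$ to get $\mrhom_{\mch(\ol{M},\lambda_M)}(\mcv_M^{\lambda_M},\pi^{\lambda_M})$, note that $\pi^{\lambda_M}$ is a character of the commutative algebra $\mch(\ol{M},\lambda_M)\cong\mca$, and conclude by the freeness of rank $\car{\bs{J}_M/N_M(\lambda_M)}$ from Proposition~\ref{propVMlambdaMfree}. Your extra care about first projecting $\mcv_M$ to its $\mfs_M$-component is a point the paper leaves implicit, but it is the same argument.
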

	
	\begin{proof}
		
		We need to calculate the dimension of the vector space
		$$\mrhom_{\ol{M}}(\mcv_M,\pi)\cong\mrhom_{\mch(\ol{M},\lambda_M)}(\mcv_M^{\lambda_M},\pi^{\lambda_M}).$$
		Since $\mch(\ol{M},\lambda_M)$ is abelian and the irreducible representation $\pi$ contains $\lambda_M$, we have that $\pi^{\lambda_M}$ is a character of $\mch(\ol{M},\lambda_M)$ of multiplicity one. Using Proposition \ref{propVMlambdaMfree}, we have $$\mrhom_{\mch(\ol{M},\lambda_M)}(\mcv_M^{\lambda_M},\pi^{\lambda_M})\cong\car{\bs{J}_M/N_M(\lambda_M)}\cdot\mrhom_{\mch(\ol{M},\lambda_M)}(\mch(\ol{M},\lambda_M),\pi^{\lambda_M})\cong\mbc^{\car{\bs{J}_M/N_M(\lambda_M)}}.$$
	\end{proof}
	
	\section{$\mch_0$-module structure of $\mcv^{\lambda'}$}\label{sectionH0module}
	
	In this section, we consider $\mcv^{\lambda'}$ as an $\mch(\ol{G},\lambda')$-module. In particular, since $\mch_0\cong\mch(\ol{ J_{\text{max}}},\lambda')$ is the related finite Hecke algebra as a subalgebra of $\mch(\ol{G},\lambda')$, we would like to study the $\mch_0$-module structure of $\mcv^{\lambda'}$.
	
	Our goal is to study the space
	$$\mcv^{\lambda'}=\mrhom_{\ol{G}}(\mrind_{\ol{J'}}^{\ol{G}}(\lambda'),\mrind_{U}^{\ol{G}}(\psi)).$$
	Using the Frobenius reciprocity and Mackey theorem, we have
	\begin{equation*}
		\begin{aligned}
			\mrhom_{\ol{G}}(\mrind_{\ol{J'}}^{\ol{G}}(\lambda'),\mrind_{U}^{\ol{G}}(\psi))\cong\mrhom_{\ol{J'}}(\lambda',\mrind_{U}^{\ol{G}}(\psi)\rest_{\ol{J'}})&\cong\bigoplus_{g\in U\backslash G/J'}\mrhom_{\ol{J'}}(\lambda',\mrind_{U^g\cap \ol{J'}}^{\ol{J'}}(\psi^g))\\
			&\cong\bigoplus_{g\in U\backslash G/J'}\mrhom_{\ol{J'}\cap U^g}(\lambda',\psi^g).
		\end{aligned}
	\end{equation*}
	We study those $g$ such that 
	$$\mrhom_{\ol{J'}\cap U^g}(\lambda',\psi^g)\cong\mrhom_{\ol{J'}\cap U^g}(\kappamax,\chi^g)\otimes\mrhom_{\ol{J'}\cap U^g}(\rho',\psi_b^g)$$
	is non-zero.
	
	\begin{lemma}\label{lemmathetachidist}
		
		For $g\in U B^\times  J_{\text{max}}$, we have $\chi^g\rest_{ H_{\text{max}}^{1}\cap U^g}=\theta_{\text{max}}\rest_{ H_{\text{max}}^{1}\cap U^g}.$
		
	\end{lemma}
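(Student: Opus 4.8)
The plan is to strip $g$ of its outer factors, reduce to a conjugation by a diagonal element of $B^\times$, and conclude by an Iwahori‑decomposition computation.

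\emph{Reduction to $g=b\in B^\times$.} First I would write $g=ubj$ with $u\in U$, $b\in B^\times$ and $j\in\Jmax$. For $x\in\Honemax\cap U^g$ set $x_1=jxj^{-1}$; since $j\in\Jmax$ normalises $\Honemax$ and fixes $\theta_{\text{max}}$ (a standard feature of the groups attached to a simple stratum, \emph{cf.}\ \cite{bushnell129admissible}), we get $x_1\in\Honemax$ and $\theta_{\text{max}}(x)=\theta_{\text{max}}(x_1)$, while $gxg^{-1}=u(bx_1b^{-1})u^{-1}\in U$ forces $bx_1b^{-1}\in U$ and, $\chi$ being a character of $U$ and conjugation by $u\in U$ preserving $U$, $\chi^g(x)=\chi(bx_1b^{-1})$. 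Thus it suffices to show $\chi(byb^{-1})=\theta_{\text{max}}(y)$ for all $b\in B^\times$ and all $y\in\Honemax\cap U^b$.

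\emph{Reduction to $b=t\in T_B$.} This is the step where the choice of $\mcf$ (refining $\mcf_0^-$) is used: it gives $N_0^-\subset U$, hence $B^\times\cap N_0^-\subset U$. By the Iwasawa decomposition of $B^\times$ with respect to the Borel $B^\times\cap P_0^-=(B^\times\cap N_0^-)\,T_B$ and the maximal compact $U(\bmax)\subset\Jmax$, one writes $b=u_0\,t\,k$ with $u_0\in B^\times\cap N_0^-\subset U$, $t\in T_B$ and $k\in U(\bmax)$. The factor $k$ is absorbed exactly as $j$ above; then $byb^{-1}=u_0(tyt^{-1})u_0^{-1}\in U$ forces $tyt^{-1}\in U$, so $\chi(byb^{-1})=\chi(tyt^{-1})$ again by triviality of $\chi$ on $[U,U]$. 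Hence it remains to prove $\chi(tyt^{-1})=\theta_{\text{max}}(y)$ for $t\in T_B$ and $y\in\Honemax$ with $z:=tyt^{-1}\in U$.

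\emph{The diagonal case.} The parabolic $P_0=M_0N_0$ is subordinate to the lattice chain of $\amax$, so $\Honemax$ — and, as $t\in T_B\subset M_0$ normalises $M_0,N_0,N_0^-$, also $t\Honemax t^{-1}$ — has an Iwahori decomposition relative to $P_0$. Since $z\in U\subset P_0^-$ has trivial $N_0$‑component, $z=\zeta^-\zeta^0$ with $\zeta^-\in t\Honemax t^{-1}\cap N_0^-$ and $\zeta^0\in t\Honemax t^{-1}\cap M_0\subset M_0\cap U$. Writing $M_0=\prod_i\mraut_F(Ev_i')$ and $t=(t_i)$ with $t_i\in E^\times$, conjugation by $t_i$ merely scales the $\mfo_F$‑lattice defining $\amax\cap\mraut_F(Ev_i')$ by an element of $E^\times$ and so fixes that order; hence $t\Honemax t^{-1}\cap M_0=\Honemax\cap M_0$, so $\zeta^0\in\Honemax\cap U$ and $\chi(\zeta^0)=\theta_{\text{max}}(\zeta^0)$ by Theorem~\ref{thmsimpletypewhittaker}.(1). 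That theorem (via $\chi$ being trivial on $N_0^-$) also forces $\theta_{\text{max}}$ to vanish on $\Honemax\cap N_0^-$; therefore $\chi(z)=\chi(\zeta^-)\chi(\zeta^0)=\chi(\zeta^0)$, and, since $t$ commutes with $\beta$, both $t^{-1}\zeta^-t\in\Honemax\cap N_0^-$ and $t^{-1}\zeta^0t\in\Honemax$, whence
$$\theta_{\text{max}}(y)=\theta_{\text{max}}(t^{-1}zt)=\theta_{\text{max}}(t^{-1}\zeta^-t)\,\theta_{\text{max}}(t^{-1}\zeta^0t)=\theta_{\text{max}}(t^{-1}\zeta^0t)=\theta_{\text{max}}(\zeta^0),$$
the last equality because $t\in B^\times=I_G(\theta_{\text{max}})$ intertwines the character $\theta_{\text{max}}$ and $\zeta^0,\,t^{-1}\zeta^0t\in\Honemax$. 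Comparing the two displays gives $\chi(z)=\theta_{\text{max}}(y)$, as required.

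The main obstacle is the second step: the reduction to a diagonal $b$ genuinely depends on the flag $\mcf$ having been chosen to refine $\mcf_0^-$, so that the unipotent part produced by the Iwasawa decomposition of $B^\times$ lies inside $U$ and can be absorbed; without this one cannot control $\Honemax\cap U^b$. Granting the diagonal reduction, the third step is essentially bookkeeping with the standard Iwahori decomposition of $\Honemax$ together with the fact that $B^\times$ intertwines the maximal simple character.
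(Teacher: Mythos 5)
Your proposal is correct and takes essentially the same route as the paper: both reduce via the Iwasawa decomposition $B^\times=U_B T_B U(\bmax)$ (absorbing $U_B\subset U$ on the left and $U(\bmax)\subset\Jmax$, which normalizes $\theta_{\text{max}}$, on the right) to the case $g=t\in T_B$, and then conclude from the Iwahori decomposition relative to $P_0=M_0N_0$, the triviality of $\chi$ on $N_0^-$, the identity $\chi=\theta_{\text{max}}$ on $\Honemax\cap U$, and the $T_B$-invariance (intertwining) of $\theta_{\text{max}}$ on the $M_0$-part. The only cosmetic difference is that you decompose the conjugated element $tyt^{-1}$ inside $t\Honemax t^{-1}$, while the paper checks the two characters piece by piece on $\Honemax\cap U^t$.
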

	
	\begin{proof}
		
		Using the Iwasawa decomposition $B^\times=U_B T_BU(\mfb_{\text{max}})$ and the fact that $U(\mfb_{\text{max}})\subset J_{\text{max}}$ normalizes $\theta_{\text{max}}$, we may without loss of generality assume that $g\in T_B$. Recall that we have
		$$ H_{\text{max}}^{1}=( H_{\text{max}}^{1}\cap N_0^{-})( H_{\text{max}}^{1}\cap M_0)( H_{\text{max}}^{1}\cap N_0)$$
		and
		$$\theta_{\text{max}}\rest_{ H_{\text{max}}^{1}\cap N_0^{-}}=\theta_{\text{max}}\rest_{ H_{\text{max}}^{1}\cap N_0}=1.$$
		Since $g$ normalizes $N_0^-$ and $\chi$ is trivial on $N_0^-$, we get
		$$\chi^g\rest_{ H_{\text{max}}^{1}\cap N_0^{-}\cap U^g}=\chi^g\rest_{ H_{\text{max}}^{1}\cap (N_0^{-}\cap U)^g}=1=\theta_{\text{max}}\rest_{ H_{\text{max}}^{1}\cap (N_0^{-}\cap U)^g}.$$
		Since $g$ normalizes $N_0$, and $N_0\cap U=\{1\}$, 
		we get
		$$\chi^g\rest_{ H_{\text{max}}^{1}\cap N_0\cap U^g}=\chi^g\rest_{ H_{\text{max}}^{1}\cap (N_0\cap U)^g}=1=\theta_{\text{max}}\rest_{ H_{\text{max}}^{1}\cap (N_0\cap U)^g}.$$
		Finally, since $g$ normalizes $ H_{\text{max}}^{1}\cap M_0$ and $\theta_{\text{max}}\rest_{ H_{\text{max}}^{1}\cap M_0}$, and $\chi\rest_{  H_{\text{max}}^{1}\cap U}=\theta_{\text{max}}\rest_{  H_{\text{max}}^{1}\cap U}$, we have
		$$\chi^g\rest_{ H_{\text{max}}^{1}\cap M_0\cap U^g}=\chi^g\rest_{( H_{\text{max}}^{1}\cap M_0\cap U)^g}=\theta_{\text{max}}^g\rest_{( H_{\text{max}}^{1}\cap M_0\cap U)^g}=\theta_{\text{max}}\rest_{ H_{\text{max}}^{1}\cap M_0\cap U^g}.$$
		So we finish the proof.
		
	\end{proof}

	\begin{corollary}
		
		For $g\in U B^\times J_{\text{max}}$, we have $$\mrhom_{J_{\text{max}}^{1}\cap U^g}(\eta_{\text{max}},\chi^g)=\mrhom_{J'\cap U^g}(\kappamax,\chi^g)=\mrhom_{ J_{\text{max}}\cap U^g}(\kappamax,\chi^g)\cong \mbc.$$
		
	\end{corollary}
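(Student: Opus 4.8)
The plan is to deduce the statement from a squeeze. First, the three spaces are nested: since $\Honemax\subseteq\Jonemax\subseteq J'\subseteq\Jmax$ and $\kappa_{\text{max}}$ restricts to $\eta_{\text{max}}$ on $\Jonemax$, intersecting with $U^g$ and restricting linear functionals gives inclusions of $\mbc$-vector spaces
\begin{equation*}
\mrhom_{\Jmax\cap U^g}(\kappa_{\text{max}},\chi^g)\subseteq\mrhom_{J'\cap U^g}(\kappa_{\text{max}},\chi^g)\subseteq\mrhom_{\Jonemax\cap U^g}(\eta_{\text{max}},\chi^g).
\end{equation*}
Hence it suffices to show the left-hand space is nonzero and the right-hand space is at most one-dimensional; the middle space is then squeezed and all three equal $\mbc$.

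Next I would reduce to $g\in T_B$. Writing $g=vbj$ with $v\in U$, $b\in B^\times$, $j\in\Jmax$, one uses that $v$ fixes $(U,\chi)$ and that $j$ normalizes $\Jmax$ and $\Jonemax$ and fixes $\kappa_{\text{max}},\eta_{\text{max}}$ up to isomorphism to pass to $g=b$, and then the Iwasawa decomposition $B^\times=U_B T_B U(\bmax)$, with $U_B\subseteq U$ and $U(\bmax)\subseteq\Jmax$, brings the two outer spaces down to $g\in T_B$ (the middle space needs no reduction, being handled by the squeeze). Since $T_B=B^\times\cap M_0$ lies in the Levi $M_0$ of $P_0$, and the chosen flag $\mcf$ refines $\mcf_0^-$, one has $U\subseteq P_0^-$ and so $U^t=N_0^-\,(U\cap M_0)^t$ for $t\in T_B$; combined with the Iwahori-type decompositions of $\Honemax$ and $\Jonemax$ along $P_0$, this lets one control the image of $\Jonemax\cap U^t$ in the symplectic space $\Jonemax/\Honemax$.

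For the upper bound I would invoke the Heisenberg structure of $\eta_{\text{max}}$: its restriction to $\Honemax$ is $\theta_{\text{max}}$-isotypic of multiplicity $[\Jonemax:\Honemax]^{1/2}$, and $(\bar x,\bar y)\mapsto\theta_{\text{max}}([x,y])$ is a nondegenerate alternating form on $\Jonemax/\Honemax$. For $x,y\in\Jonemax\cap U^t$ one has $[x,y]\in\Honemax\cap U^t$, and Lemma \ref{lemmathetachidist}, together with the fact that $\chi^t$ is a character of $U^t$ and hence kills commutators, gives $\theta_{\text{max}}([x,y])=\chi^t([x,y])=1$; thus the image of $\Jonemax\cap U^t$ in $\Jonemax/\Honemax$ is isotropic. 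The crux — and what I expect to be the main obstacle — is that this image is in fact \emph{Lagrangian}, equivalently $\eta_{\text{max}}\cong\mrind_{(\Jonemax\cap U^t)\Honemax}^{\Jonemax}(\vartheta_t)$ for a character $\vartheta_t$ extending $\theta_{\text{max}}$ and $\chi^t$. For $t=1$ this follows from Theorem \ref{thmsimpletypewhittaker}.(2) by restricting $\kappa_{\text{max}}\rest_{(\Jmax\cap U)\Jonemax}$ to $\Jonemax$ (the Mackey decomposition then has a single double coset); for general $t\in T_B$ it amounts to the conjugated Whittaker datum staying in good relative position with respect to the simple stratum, which I would obtain from the compatible decompositions above, or directly from \cite{paskunas2008realization}*{Corollaries 3.4, 3.5}. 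Granting Lagrangianness, the usual count on the $\theta_{\text{max}}$-isotypic subspace gives $\dim_{\mbc}\mrhom_{\Jonemax\cap U^t}(\eta_{\text{max}},\chi^t)=[\Jonemax:\Honemax]^{1/2}/[\Jonemax\cap U^t:\Honemax\cap U^t]=1$.

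For the non-vanishing of $\mrhom_{\Jmax\cap U^t}(\kappa_{\text{max}},\chi^t)$ I would again restrict the isomorphism $\kappa_{\text{max}}\rest_{(\Jmax\cap U)\Jonemax}\cong\mrind_{(\Jmax\cap U)\Honemax}^{(\Jmax\cap U)\Jonemax}(\vartheta)$ of Theorem \ref{thmsimpletypewhittaker}.(2), this time all the way down to $\Jmax\cap U$: isolating the trivial double coset in the Mackey formula exhibits $\chi\rest_{\Jmax\cap U}$ inside $\kappa_{\text{max}}\rest_{\Jmax\cap U}$, so $\mrhom_{\Jmax\cap U}(\kappa_{\text{max}},\chi)\neq0$; propagating to general $t$ via the conjugation of the second paragraph then finishes the proof. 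Everything apart from the Lagrangianness/good-position input is formal Clifford theory and Frobenius reciprocity.
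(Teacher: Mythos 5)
Your proposal is correct and follows essentially the same route as the paper: the same squeeze $\mrhom_{\Jmax\cap U^g}(\kappamax,\chi^g)\subseteq\mrhom_{J'\cap U^g}(\kappamax,\chi^g)\subseteq\mrhom_{\Jonemax\cap U^g}(\eta_{\text{max}},\chi^g)$, the same Iwasawa reduction to $t\in T_B$, the same Heisenberg/Lagrangian count (your index identity $[\Jonemax\cap U^t:\Honemax\cap U^t]=[\Jonemax:\Honemax]^{1/2}$ is exactly the paper's key equation, proved there via the Iwahori decomposition along $P_0$ and the normalizing action of $t$), and the same trivial-double-coset use of Paskunas--Stevens (packaged in the paper as Lemma \ref{lemmaVlambdasupport}.(1) with $M=G$) for the non-vanishing of $\mrhom_{\Jmax\cap U^t}(\kappamax,\chi^t)$.
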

	
	\begin{proof}
		
		Without loss of generality, we may assume that $g\in B^\times$. Using the Iwasawa decomposition $B^\times=U(\mfb_{\text{max}})T_B U_B$, we may further assume that $g\in T_B$.
		
		First we show that \begin{equation}\label{eqetamaxdist}
			\mrhom_{J_{\text{max}}^{1}\cap U^g}(\eta_{\text{max}},\chi^g)\cong\mbc.
		\end{equation}
		
		\begin{lemma}\label{lemmaJhcapUdecom}
			
			We have $H_{\text{max}}^{1}\cap U^g=(H_{\text{max}}^{1}\cap N_0^-)(H_{\text{max}}^{1}\cap M_0\cap U)^g$, $J_{\text{max}}^{1}\cap U^g=(J_{\text{max}}^{1}\cap N_0^-)(J_{\text{max}}^{1}\cap M_0\cap U)^g$ and $ J_{\text{max}}\cap U^g=( J_{\text{max}}\cap N_0^-)( J_{\text{max}}\cap M_0\cap U)^g$.
			
		\end{lemma}
		
		\begin{proof}
			
			We take the last equation as an example. By the Iwahori decomposition and the fact that $g$ normalizes $M_0$, $N_0^-$ and $ J_{\text{max}}\cap M_0$, we have
			$$ J_{\text{max}}\cap U^g=( J_{\text{max}}\cap N_0^{-g})( J_{\text{max}}\cap M_0^g\cap U^g)=( J_{\text{max}}\cap N_0^{-})( J_{\text{max}}\cap M_0\cap U)^g.$$
			
		\end{proof}
		
		We claim that 
		\begin{equation}\label{eqHUJUHJ}
			[J_{\text{max}}^{1}\cap U^g:H_{\text{max}}^{1}\cap U^g]=[J_{\text{max}}^{1}:H_{\text{max}}^{1}]^{1/2}.
		\end{equation}
		Indeed, using the Iwahori decomposition and the above lemma, we have the decomposition (\emph{cf.} \cite{paskunas2008realization}*{Proof of Theorem 2.6})
		$$J_{\text{max}}^{1}/H_{\text{max}}^{1}\cong(J_{\text{max}}^{1}\cap N_0^-/(H_{\text{max}}^{1}\cap N_0^-)\times (J_{\text{max}}^{1}\cap M_0)/(H_{\text{max}}^{1}\cap M_0)\times(J_{\text{max}}^{1}\cap N_0^-)/(H_{\text{max}}^{1}\cap N_0^-)$$
		of symplectic spaces over $\mbf_p$, where the three subspaces on the right-hand side are orthogonal to each other, and the first and the third subspaces are totally isotropic and of the same dimension. Moreover, $J_{\text{max}}^{1}\cap M_0\cap U/H_{\text{max}}^{1}\cap M_0\cap U$ is a maximal totally isotropic subspace in $J_{\text{max}}^{1}\cap M_0/H_{\text{max}}^{1}\cap M_0$. As a result \eqref{eqHUJUHJ} is proved.
		
		Using Lemma \ref{lemmathetachidist}, \eqref{eqetathetamax},  \eqref{eqHUJUHJ}, Frobenius reciprocity and Mackey theory and the fact that $ J_{\text{max}}^{1}$ normalizes $ H_{\text{max}}^{1}$ and $\theta_{\text{max}}$, we have
		\begin{equation*}
			\begin{aligned}
				[ J_{\text{max}}^{1}\cap U^g: H_{\text{max}}^{1}\cap U^g]\cdot\mrhom_{J_{\text{max}}^{1}\cap U^g}(\eta_{\text{max}},\chi^g)&\cong\bigoplus_{g'\in H_{\text{max}}^{1}\backslash J_{\text{max}}^{1}/J_{\text{max}}^{1}\cap U^g}\mrhom_{H_{\text{max}}^{1g'}\cap U^g}(\theta_{\text{max}}^{g'},\chi^g)\\
				&\cong[ J_{\text{max}}^{1}:( J_{\text{max}}^{1}\cap U^g) H_{\text{max}}^{1}]\cdot \mrhom_{H_{\text{max}}^{1}\cap U^g}(\theta_{\text{max}},\chi^g)\\
				&\cong\mbc^{[ J_{\text{max}}^{1}\cap U^g: H_{\text{max}}^{1}\cap U^g]}.
			\end{aligned}	
		\end{equation*}
		So we proved \eqref{eqetamaxdist}.
		
		Then we show that for any $g\in T_B$,
		\begin{equation}\label{eqkappamaxneq0}
			\mrhom_{ J_{\text{max}}\cap U^g}(\kappamax,\chi^g)\neq 0.
		\end{equation}
		Using Lemma \ref{lemmaJhcapUdecom},  the fact that $g$ normalizes $N_0^-$, $M_0$ and $\kappamax\rest_{ J_{\text{max}}\cap M_0}$, and $\chi^g\rest_{N_0^-}=\chi\rest_{N_0^-}=1$, we need to show that
		$$\mrhom_{ J_{\text{max}}\cap N_0^-}(\kappamax,\chi^g)=\mrhom_{ J_{\text{max}}\cap N_0^-}(\kappamax,\chi)\neq 0$$
		and
		$$\mrhom_{ J_{\text{max}}\cap M_0^g\cap U^g}(\kappamax,\chi^g)= \mrhom_{( J_{\text{max}}\cap M_0\cap U)^g}(\kappamax^g,\chi^g)\cong \mrhom_{ J_{\text{max}}\cap M_0\cap U}(\kappamax,\chi)\neq 0.$$
		Thus
		we need to show that
		$$\mrhom_{ J_{\text{max}}\cap U}(\kappamax,\chi)\neq 0,$$
		which follows from
		Lemma \ref{lemmaVlambdasupport}.(1) by considering the special case $M=G$. 

		Finally the corollary follows from \eqref{eqetamaxdist}, \eqref{eqkappamaxneq0} and the obvious inclusions
		$$\mrhom_{J_{\text{max}}^{1}\cap U^g}(\eta_{\text{max}},\chi^g)\supset\mrhom_{J'\cap U^g}(\kappamax,\chi^g)\supset\mrhom_{ J_{\text{max}}\cap U^g}(\kappamax,\chi^g).$$
		
	\end{proof}
	
	We digress for a moment to discuss the converse of Lemma \ref{lemmathetachidist}, which will be used in Section \ref{sectionHmodulemcv} (More precisely, Proposition \ref{propPhitiPhi'tivarphiti}).

	\begin{proposition}\label{propthetachieqconverse}
		
		For $g\in G$ such that $\chi^g\rest_{ H_{\text{max}}^{1}\cap U^g}=\theta_{\text{max}}\rest_{ H_{\text{max}}^{1}\cap U^g}$, we have $g\in U B^\times  J_{\text{max}}$.
		
	\end{proposition}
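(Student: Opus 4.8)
\emph{Strategy and Step 1.} The plan is to show that the matching of characters on $\Honemax\cap U^{g}$ is restrictive enough to force $g$, modulo left translation by $U$ and right translation by $\Jmax$, to lie in $B^{\times}$; the key input is the classification of maximal simple characters by their intertwining sets. First I reduce the hypothesis. Since $\mcf$ contains $\mcf_{0}^{-}$ we have $N_{0}^{-}\subseteq U$, and $\chi$ is trivial on $N_{0}^{-}$ while $\theta_{\text{max}}$ is trivial on $\Honemax\cap N_{0}^{-}$ and on $\Honemax\cap N_{0}$ (the off-diagonal pieces of its Iwahori decomposition relative to $P_{0}$ and $P_{0}^{-}$); hence the matching $\chi^{g}\rest_{\Honemax\cap U^{g}}=\theta_{\text{max}}\rest_{\Honemax\cap U^{g}}$ is governed by the $M_{0}$-part, on which $\chi$ agrees with $\psi$ and $\psi\rest_{U_{B}}$ is the principal character of $U_{B}=B^{\times}\cap U$ (Theorem \ref{thmsimpletypewhittaker}). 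Using the Iwahori decompositions of $\Honemax$, $\Jonemax$ and $U^{g}$ together with \eqref{eqetathetamax}, I would first upgrade the hypothesis to $\mrhom_{\Jonemax\cap U^{g}}(\eta_{\text{max}},\chi^{g})\neq 0$. I also record two invariances that give room to normalize $g$: right translation of $g$ by $\Jmax$ preserves the hypothesis because $\Jmax$ normalizes $(\Honemax,\theta_{\text{max}})$, and left translation by $U_{B}$ preserves it because $\chi$ is invariant under $N_{0}^{-}$-conjugation.

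\emph{Step 2 --- the intertwining classification (the main obstacle).} By Bushnell--Kutzko one has $I_{G}(\theta_{\text{max}})=B^{\times}\Jmax B^{\times}$; more to the point, a character of a unipotent subgroup that matches $\theta_{\text{max}}$ on the relevant intersection cannot be supported outside this double-coset union. Deducing this for an \emph{arbitrary} $g$ --- not yet in Iwasawa-nice position --- is the technical heart, since it requires controlling the restriction of the deep simple character $\theta_{\text{max}}$ to $\Honemax\cap U^{g}$ in terms of the hereditary order $\amax$, its normalizer and the transfer maps. This is exactly the point at which I would invoke \cite{secherre2016block}*{Lemma 4.2} and the relation $(5.6)$ of \emph{loc.\ cit.} The outcome of this step is $g\in B^{\times}\Jmax B^{\times}$.

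\emph{Step 3 --- from $B^{\times}\Jmax B^{\times}$ to $UB^{\times}\Jmax$.} Choosing the Iwasawa decomposition $B^{\times}=U_{B}T_{B}U(\bmax)$ on the left and $B^{\times}=U(\bmax)T_{B}U_{B}$ on the right, and using $U_{B}\subseteq U$ and $U(\bmax)\subseteq\Jmax$, one gets $B^{\times}\Jmax B^{\times}=U\,T_{B}\,\Jmax\,T_{B}\,U_{B}$; since $UB^{\times}\Jmax=U\,T_{B}\,\Jmax$, it remains to absorb the rightmost factor $T_{B}U_{B}$. I would argue that if the valuation vector of the rightmost diagonal factor $t\in T_{B}$ were nonzero --- i.e.\ $t\notin B^{\times}\cap\Jmax$ up to units --- then one can exhibit a root subgroup of $U^{g}$ contained in $\Honemax$ on which $\chi^{g}$ and $\theta_{\text{max}}$ disagree, contradicting the hypothesis; here the genericity of $\psi\rest_{U_{B}}$ (Theorem \ref{thmsimpletypewhittaker}) together with the invariances of Step 1 is what makes the argument go through. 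This step is elementary but somewhat delicate, because $T_{B}$ does not normalize $\Jmax$, so the bookkeeping has to be carried out inside $B^{\times}$.
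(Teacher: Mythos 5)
Your Step 2 is where the argument breaks, and the claimed outcome of that step is not just unproved but false. The hypothesis is \emph{not} an intertwining condition on $\theta_{\text{max}}$ against itself, so the classification $I_G(\theta_{\text{max}})=\Jonemax B^\times \Jonemax$ is not the relevant input, and the conclusion ``$g\in B^\times\Jmax B^\times$'' cannot follow from the hypothesis: every $u\in U$ satisfies the hypothesis (since $\chi^u=\chi$ on $U^u=U$ and $\chi\rest_{\Honemax\cap U}=\theta_{\text{max}}\rest_{\Honemax\cap U}$ by Theorem \ref{thmsimpletypewhittaker}(1)), yet already for $r=2$ and $E/F$ quadratic (so $m=1$, $B^\times=E^\times$ normalizes $\Jmax$ and $B^\times\Jmax B^\times=E^\times\Jmax$) the determinant-one elements of $E^\times\Jmax$ lie in the compact set $U(\amax)$, so $U\not\subset B^\times\Jmax B^\times$. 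What the Sécherre--Stevens statement (5.6) actually yields is different: since $N_0^-\subset U$ and $\chi\rest_{N_0^-}=1$, the hypothesis forces $\theta_{\text{max}}\rest_{\Honemax\cap N_0^{-g}}=1$, and (5.6) then gives $g\in N_0^- M_0\Jmax$ (Lemma \ref{lemmatechnical}) --- a double coset attached to the parabolic $P_0^-=M_0N_0^-$ coming from the $E$-line decomposition, with $M_0\cong\mrgl_d(F)^m$ much larger than $T_B$ and not contained in $B^\times\Jmax B^\times$. Since your Step 3 starts from the false intermediate claim, no amount of bookkeeping inside $B^\times$ can repair it.

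The genuinely missing piece is the second half of the argument, which takes place inside $M_0$. After $g\in N_0^-M_0\Jmax$ one replaces $g$ within $Ug\Jmax$ (using $N_0^-\subset U$) to assume $g\in M_0$, and must then decide for which $m\in M_0$ one has $\chi^m\rest_{\Honemax\cap U_{M_0}^m}=\theta_{\text{max}}\rest_{\Honemax\cap U_{M_0}^m}$. This cannot be done by exhibiting a root subgroup on which the characters disagree when a ``valuation vector'' is nonzero: agreement does hold for all $m\in U_{M_0}T_B(\Jmax\cap M_0)$ with $T_B$-components of arbitrary valuation (compare Lemma \ref{lemmathetachidist}), so no such elementary criterion separates the good cosets. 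The paper's proof instead uses multiplicity one: $\kappa_{\text{max}}\rest_{\Jmax\cap M_0}$ extends to $\bs{J}_{M_0}=T_B(\Jmax\cap M_0)$ and compactly induces an irreducible cuspidal representation of $M_0$; uniqueness of its Whittaker model, combined with nonvanishing of the hom-space at the identity coset (the $M_0$-analogue of Lemma \ref{lemmaVlambdasupport}(1)), forces the support to be $U_{M_0}\bs{J}_{M_0}$, i.e.\ $m\in U_{M_0}T_B(\Jmax\cap M_0)\subset UB^\times\Jmax$. Your Step 1 observations (triviality on the $N_0^{\pm}$ parts, the upgrade to $\mrhom_{\Jonemax\cap U^g}(\eta_{\text{max}},\chi^g)\neq 0$, and the invariances under left $U$- and right $\Jmax$-translation) are correct but do not supply this representation-theoretic step.
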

	
	\begin{proof}
		
		Using the containment
		$$ H_{\text{max}}^{1}\cap N_0^{-g}\subset H_{\text{max}}^{1}\cap U^g$$
		and the facts that $\chi\rest_{N_0^-}=1$,
		the condition $\chi^g\rest_{ H_{\text{max}}^{1}\cap U^g}=\theta_{\text{max}}\rest_{ H_{\text{max}}^{1}\cap U^g}$ implies that
		$$\theta_{\text{max}}\rest_{ H_{\text{max}}^{1}\cap N_0^{-g}}=1.$$
		
		We need the following lemma.
		
		\begin{lemma}[\cite{secherre2016block}*{Statement (5.6) in Proposition 5.6}]\label{lemmatechnical}
			
			For $g\in  G$, we have  $\theta_{\text{max}}\rest_{ H_{\text{max}}^{1}\cap N_0^{-g}}=1$ if and only if $g\in N_0^{-}M_0J_{\text{max}} $.	
			
		\end{lemma}

		Using the above lemma, we have $g\in N_0^{-}M_0J_{\text{max}}$.  Replacing $g$ by another representative in the double coset $UgJ_{\text{max}}$, we may assume without loss of generality that $g\in M_0$.
		
		\begin{lemma}
			
			For $m\in M_0$, we have $\chi^{m}\rest_{ H_{\text{max}}^{1}\cap U_{M_0}^{m}}=\theta_{\text{max}}\rest_{ H_{\text{max}}^{1}\cap U_{M_0}^{m}}$ if and only if $m\in U_{M_0}T_B( J_{\text{max}}\cap M_0)$.
			
		\end{lemma}
		
		\begin{proof}
			
			Let $\theta_{M_0}=\theta_{\text{max}}\rest_{ H_{\text{max}}^{1}\cap M_0}$, $\eta_{M_0}=\eta_{\text{max}}\rest_{ J_{\text{max}}^{1}\cap M_0}$ and $\kappa_{M_0}=\kappa_{\text{max}}\rest_{ J_{\text{max}}\cap M_0}$.
			The intertwining set  and normalizer of $\kappa_{M_0}$ in $M_0$ is $$\bs{J}_{M_0}:=T_B( J_{\text{max}}\cap M_0).$$ Let $\bs{\lambda}_{M_0}$ be an irreducible of $\bs{J}_{M_0}$ extending $\kappa_{M_0}$. Then the normalizer of $\bs{\lambda}_{M_0}$ is $\bs{J}_{M_0}$, implying that the compact induction $\pi_{M_0}=\mrind_{\bs{J}_{M_0}}^{M_0}\bs{\lambda}_{M_0}$ is an irreducible cuspidal representation of $M_0$. Using the multiplicity one result of the Whittaker model of $\pi_{M_0}$, the fact that $\chi\rest_{U_{M_0}}=\psi\rest_{U_{M_0}}$ is a generic character of $M_0$, and Frobenius reciprocity and the Mackey formula, we have
			$$\mbc\cong\mrhom_{U_{M_0}}(\pi_{M_0},\chi)\cong\bigoplus_{m\in \bs{J}_{M_0}\backslash M_0/U_{M_0}}\mrhom_{\bs{J}_{M_0}^m\cap U_{M_0}}(\bs{\lambda}_{M_0}^m,\chi).$$
			Using the proof of Lemma \ref{lemmaVlambdasupport}.(1) with $M$ replaced by $M_0$ and the fact that $\bs{J}_{M_0}\cap U_{M_0}=J_{M_0}\cap U_{M_0}$, we may show that $$\mrhom_{\bs{J}_{M_0}\cap U_{M_0}}(\bs{\lambda}_{M_0},\chi)=\mrhom_{J_{M_0}\cap U_{M_0}}(\kappa_{M_0},\chi)\neq 0,$$
			and thus
			$$\mrhom_{\bs{J}_{M_0}^m\cap U_{M_0}}(\bs{\lambda}_{M_0}^m,\chi)=0$$
			for $m\notin \bs{J}_{M_0}U_{M_0}$. Restricting to $( H_{\text{max}}^{1}\cap M_0)^m\cap U_{M_0}$ and noticing that $\bs{\lambda}_{M_0}\rest_{ H_{\text{max}}^{1}\cap M_0}$ is a multiple of $\theta_{M_0}$, we have that $\chi^{m}\rest_{ H_{\text{max}}^{1}\cap U_{M_0}^{m}}=\theta_{\text{max}}\rest_{ H_{\text{max}}^{1}\cap U_{M_0}^{m}}$ if and only if $m\in U_{M_0}\bs{J}_{M_0}$.
			
		\end{proof}
		
		Using the above lemma for $m=g$, we have $g\in U_{M_0}T_B( J_{\text{max}}\cap M_0)\subset UB^\times J_{\text{max}}$, which finishes the proof of Proposition \ref{propthetachieqconverse}.
		
	\end{proof}
	
	Come back to our original discussion. Recall that we have the Iwasawa decomposition
	$$B^\times=U_BT_B U(\bmax).$$
	So we may essentially assume $g=tx^{-1}$ for $x\in U(\bmax)$ and $t\in T_B$. We study when the space
	\begin{equation}\label{eqrhopsib}
		\mrhom_{\ol{J'}\cap U^g}(\rho',\psi_b^g)
	\end{equation}
	is non-zero. Restricting to $U^1(\bmax)\cap U_B$, we necessarily have
	$$\mrhom_{U^1(\bmax)\cap U_B^t}(1,\psi_b^t)=\mrhom_{U^1(\bmax)\cap U_B}(1,\psi_b^t)\neq 0.$$
	
	\begin{lemma}\label{lemmapsibnonzterocond}
		
		For $t\in T_B$, the restriction $\psi_b^t\rest_{U^1(\bmax)\cap U_B}$ is trivial if and only if $t\in T_{B,\leq}$, where 
		$$T_{B,\leq}:=\{\mrdiag(b_1,\dots,b_m)\in T_B\mid v_E(b_1)\leq v_E(b_2)\leq\dots\leq v_E(b_m)\}.$$
		Moreover, if the above condition is satisfied, the restriction $\psi_b^t\rest_{J_{\text{max}}^{1}\cap U^t}$ is also trivial.
		
	\end{lemma}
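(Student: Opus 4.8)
The plan is to treat the two assertions in turn, each by reducing to the explicit formulas for $\psi$ and $\psi_b$ supplied by Theorem~\ref{thmsimpletypewhittaker}.

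\emph{The equivalence.} I would first pin down $\psi_b$ on the whole of $U_B$. Since $U_B=B^\times\cap N_0^-\subseteq N_0^-$ and $\chi$ is trivial on $N_0^-$ by Theorem~\ref{thmsimpletypewhittaker}.(1), one has $\psi_b\rest_{U_B}=\psi\rest_{U_B}$, which by Theorem~\ref{thmsimpletypewhittaker}.(5) is the character $u'\mapsto\psi_E\big(\sum_{i=1}^{m-1}u'_{i+1,i}\big)$ on the lower-triangular unipotent subgroup $U_B$ of $\mrgl_m(E)$. Now $U^1(\bmax)=1+\mrm_m(\mfp_E)$, so $U^1(\bmax)\cap U_B$ is exactly the set of matrices $1+x$ with $x$ strictly lower triangular and all entries in $\mfp_E$; and conjugation by $t=\mrdiag(b_1,\dots,b_m)\in T_B$ normalizes $U_B$ and multiplies the $(i+1,i)$-entry of $x$ by $b_{i+1}(\cdot)b_i^{-1}$. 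Hence $\psi_b^t$ restricted to $U^1(\bmax)\cap U_B$ is $1+x\mapsto\psi_E\big(\sum_i b_{i+1}x_{i+1,i}b_i^{-1}\big)$, and, testing on matrices supported in a single sub-diagonal slot, this is trivial if and only if $\psi_E$ kills $b_{i+1}\mfp_E b_i^{-1}=\mfp_E^{\,v_E(b_{i+1})-v_E(b_i)+1}$ for every $i$. Since $\psi_E$ has conductor $\mfp_E$, it is trivial on $\mfp_E^{N}$ precisely when $N\geq 1$, so the condition becomes $v_E(b_{i+1})\geq v_E(b_i)$ for all $i$, that is, $t\in T_{B,\leq}$.

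\emph{The ``moreover'' clause.} Here I would invoke the Iwahori-type decomposition $J_{\text{max}}^{1}\cap U^t=(J_{\text{max}}^{1}\cap N_0^-)(J_{\text{max}}^{1}\cap M_0\cap U)^t$ of Lemma~\ref{lemmaJhcapUdecom} (valid for every $t\in T_B$, once one checks that each $b_i\in E^\times$ commutes with $\beta$ and normalizes the relevant hereditary order, so that $t$ normalizes $N_0^-$, $M_0$ and $J_{\text{max}}^{1}\cap M_0$), and verify that the character $\psi_b^t$ kills each of the two factors. On $(J_{\text{max}}^{1}\cap M_0\cap U)^t$ this is immediate, since $\psi_b^t(t^{-1}wt)=\psi_b(w)$ and $w\in U\cap M_0$, on which $\psi_b$ is trivial (the footnote to Theorem~\ref{thmsimpletypewhittaker}.(3)). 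On $J_{\text{max}}^{1}\cap N_0^-$: as $t\in M_0$ normalizes $N_0^-$, it suffices to see $\psi_b$ is trivial on all of $N_0^-$; and indeed $\psi_b\rest_{N_0^-}=\psi\rest_{N_0^-}$, while in the matrix coordinates of Theorem~\ref{thmsimpletypewhittaker}.(5) the subgroup $N_0^-$ is the unipotent radical of the standard block-upper-triangular parabolic $P_0^-$ with blocks of size $d=[E:F]$, so every sub-diagonal entry $u_{i+1,i}$ of $u\in N_0^-$ vanishes and $\psi(u)=\psi_F\big(\sum_i u_{i+1,i}\big)=1$. Combining the two factors gives the claim; this half in fact uses nothing about $t$ beyond $t\in T_B$.

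I do not foresee a genuine obstacle: the statement is combinatorics of $E$-valuations read off through the conductor of $\psi_E$, laid on top of the flag bookkeeping already set up in Section~\ref{subsectionPSresult}. The points that demand care are matching the $\mrgl_m(E)$-realization used for $U_B$ with the $\mrgl_r(F)$-realization and the flag $\mcf_0^-$ used for $N_0^-$, identifying $U^1(\bmax)=1+\mrm_m(\mfp_E)$ so that the sub-diagonal entries may be varied freely over $\mfp_E$, and confirming that $t$ normalizes $J_{\text{max}}^{1}\cap M_0$ so that Lemma~\ref{lemmaJhcapUdecom} applies.
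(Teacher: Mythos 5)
Your proof of the equivalence is correct and is essentially the paper's own argument: reduce $\psi_b$ to $\psi$ on $U_B$ using that $\chi$ is trivial on $N_0^-$, invoke the explicit formula of Theorem~\ref{thmsimpletypewhittaker}.(5), conjugate by $t$, and test entry by entry against the conductor of $\psi_E$.

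The ``moreover'' clause, however, contains a genuine error. You claim that $\psi_b$ (equivalently $\psi$) is trivial on all of $N_0^-$. This is false as soon as $m\geq 2$: one has $U_B=B^\times\cap N_0^-\subset N_0^-$, $\chi\rest_{N_0^-}=1$, and by Theorem~\ref{thmsimpletypewhittaker}.(4)--(5) the restriction $\psi_b\rest_{U_B}=\psi\rest_{U_B}$ is a \emph{generic}, hence nontrivial, character of $U_B$, namely $u'\mapsto\psi_E\big(\sum_i u'_{i+1,i}\big)$ --- the very nontriviality your ``only if'' step exploits. Your matrix computation misidentifies which entries of $N_0^-$ enter the formula of Theorem~\ref{thmsimpletypewhittaker}.(5); the basis there is chosen precisely so that $U_B\subset N_0^-$ has nonvanishing subdiagonal contributions. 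The mistake is also visible structurally: you assert the second half holds for every $t\in T_B$, but since $U^1(\bmax)\subset J^{1}_{\text{max}}$ and $t$ normalizes $U_B\subset U$, we have $U^1(\bmax)\cap U_B\subset J^{1}_{\text{max}}\cap U^t$ for every $t\in T_B$; triviality of $\psi_b^t$ on $J^{1}_{\text{max}}\cap U^t$ for all $t$ would then contradict the ``only if'' direction you just proved. The hypothesis $t\in T_{B,\leq}$ is needed exactly on the factor $J^{1}_{\text{max}}\cap N_0^-$: as in the paper, one shows $J^{1}_{\text{max}}\cap N_0^-\subset U^1(\amax)\cap N_0^-\subset (U^1(\amax)\cap N_0^-)^t$ for $t\in T_{B,\leq}$ (conjugation by such $t$ does not lower the valuations of the relevant entries), and then uses that $\psi_b$ is trivial on $U^1(\amax)\cap U$ --- a depth statement coming from $b\in\amax$ and the conductor of $\psi_F$ --- rather than any triviality on $N_0^-$. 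Your treatment of the other factor $(J^{1}_{\text{max}}\cap M_0\cap U)^t$ via the footnote to Theorem~\ref{thmsimpletypewhittaker}.(3) is fine (the paper instead uses $J^{1}_{\text{max}}\cap M_0\cap U\subset U^1(\amax)\cap M_0\cap U$).
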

	
	\begin{proof}
		
		Using Theorem \ref{thmsimpletypewhittaker}.(5), we have $\psi_b(u')=\psi(u')=\psi_E(\sum_{i=1}^{m-1}u_{i+1,i}')$ for any $u'=(u_{ij}')\in U_B$, where $U_B$ is identified with the set of the lower triangular unipotent matrices in $B^\times\cong\mrgl_m(E)$. Write $t=\mrdiag(b_1,\dots,b_m)\in T_B$. Given $u'=(u_{ij}')\in U^1(\bmax)\cap U_B$, we have $\psi_b^t(u')=\psi_E(\sum_{i=1}^{m-1}u_{i+1,i}'b_{i+1}/b_{i})=0$. Since $u_{i+1,i}'\in\mfp_E$ are arbitrary and $\psi_E$ is of conductor $\mfp_E$, we have $t\in T_{B,\leq}$. Conversely, $t\in T_{B,\leq}$ also implies that $\psi_b^t\rest_{U^1(\bmax)^t\cap U_B}$ is trivial.
		
		For the last claim, we have the Iwahori decomposition
		$$(J_{\text{max}}^{1}\cap U^t)=(J_{\text{max}}^{1}\cap N_0^{-t})(J_{\text{max}}^{1}\cap M_0\cap U^t)=(J_{\text{max}}^{1}\cap N_0^-)(J_{\text{max}}^{1}\cap M_0\cap U)^t.$$
		We have $J_{\text{max}}^{1}\cap M_0\cap U\subset U^1(\amax)\cap M_0\cap U$. 
		Also, we have $J_{\text{max}}^{1}\cap N_0^-\subset U^1(\amax)\cap N_0^-\subset (U^1(\amax)\cap N_0^-)^t$. Using the fact that $\psi_b$ is trivial on $U^1(\amax)\cap U$, we finish the proof.
		
	\end{proof}
	
	As a result, \eqref{eqrhopsib} is non-zero implies that $t\in T_{B,\leq}$. In this case, we have
	$$\mrhom_{\ol{J'}\cap U}(\rho',\psi_b^g)=\mrhom_{\ol{U(\mfb)}\cap U_B}(\rho',\psi_b^g)\cong\mrhom_{\ol{U(\mfb)^x}\cap U_B}(\rho'^x,\psi_b^t)=\mrhom_{\mcp^x\cap \mcu}(\varrho^x,\ul{\psi}_b^{t}).$$

	\begin{lemma}\label{lemmacondtx-1nonzero}
		
		The space $\mrhom_{\mcp^x\cap \mcu}(\varrho^x,\ul{\psi}_b^t)$ is non-zero if and only if the following two conditions are satisfied: 
		\begin{itemize}
			\item 
			Write $t=\mrdiag(b_1,\dots,b_m)\in T_{B,\leq}$. Let $(m_1,\dots,m_l)$ be the composition of $m$ such that
			$$v_E(b_{i})<v_E(b_{i+1})\ \text{if and only if}\ i=m_1+\dots+m_j\ \text{for some}\ j=1,\dots,l-1.$$
			Then for each $j=1,\dots,l$, we have $m_j=m_0k_j$ for some $k_j\in\mbn$. Equivalently, for $\mcp_t$ the upper block triangular parabolic subgroup of $\mcg$ whose Levi subgroup is $$\mcm_t:=\mrgl_{m_1}(\bs{l})\times\dots\times\mrgl_{m_l}(\bs{l}),$$
			we have $\mcp\subset\mcp_t$ and $\mcm\subset\mcm_t$;
			\item $x$ is a representative of a $\mcp$-$\mcp_t$ double coset, such that among all the elements $\mcp x\mcp_t$, it is of the smallest length with respect to $\mcp\backslash \mcg/\mcp=\mcp\backslash \mcg/\mcn\cong \mfS_k$. 
			
		\end{itemize}
		In this case, it is of dimension one.
		
	\end{lemma}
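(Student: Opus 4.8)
The statement is a purely finite--group assertion about the group $\mcg=\mrgl_m(\bs{l})$: it computes a (degenerate) Whittaker functional on the parabolically induced cuspidal representation $\mrInd_{\mcp}^{\mcg}(\varrho)$, where $\varrho=\mrinf_{\mcm}^{\mcp}(\varrho_0^{\boxtimes k})$. The plan is to reduce it, via the Bruhat decomposition and the cuspidality of $\varrho_0$, to the uniqueness of the Gelfand--Graev model of $\mrgl_{m_0}(\bs{l})$. First I would pin down $\ul\psi_b^t$. By Theorem~\ref{thmsimpletypewhittaker}.(5) and the proof of Lemma~\ref{lemmapsibnonzterocond}, for $t=\mrdiag(b_1,\dots,b_m)\in T_{B,\leq}$ the character $\ul\psi_b^t$ of $\mcu$ is nontrivial on the root subgroup of the $i$-th simple root exactly when $v_E(b_i)=v_E(b_{i+1})$ (the conductor of $\psi_E$ being $\mfp_E$ kills the others); hence, with $(m_1,\dots,m_l)$ the jump pattern of $v_E(b_\bullet)$ and $\mcm_t,\mcp_t=\mcm_t\mcn_t$ as in the statement, $\ul\psi_b^t$ is the inflation to $\mcu$ of a generic character of $\mcu_{\mcm_t}:=\mcu\cap\mcm_t$ through the projection $\mcu\twoheadrightarrow\mcu_{\mcm_t}$ with kernel $\mcn_t\cap\mcu$; in particular $\ul\psi_b^t$ is trivial on $\mcn_t\cap\mcu$.

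Next I would reduce the relevant double coset. Since $\mcu\subset\mcp$ and conjugation by an element of $\mcm$ acts by an inner automorphism on each $\mrgl_{m_0}(\bs{l})$-block, one has $\varrho^p\cong\varrho$ for $p\in\mcp$, so $\mrhom_{\mcp^x\cap\mcu}(\varrho^x,\ul\psi_b^t)$ is unchanged when $x$ is multiplied on the left by $\mcp$. The standard computation of the Whittaker model of a cuspidally induced representation then applies: grouping the $\mcu$--$\mcp$ double cosets by the coarser $\mcp$--$\mcp$ double cosets and using that $\varrho_0$ is cuspidal, the total contribution of every $\mcp$--$\mcp$ double coset not coming from the relative Weyl group $\mfS_k\subset\mcg$ vanishes, and one is reduced to $x$ representing an element of $\mfS_k$. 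This is the point at which I would invoke \cite{secherre2016block}*{Lemma 4.2}.

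For such $x$ there is the Iwahori-type factorisation $\mcp^x\cap\mcu=(\mcm^x\cap\mcu)(\mcn^x\cap\mcu)$; as $\varrho^x$ is trivial on $\mcn^x$, the space vanishes unless $\ul\psi_b^t\rest_{\mcn^x\cap\mcu}=1$, and in that case it equals $\mrhom_{\mcm^x\cap\mcu}(\varrho^x,\ul\psi_b^t)$ with $\mcm^x\cong\mrgl_{m_0}(\bs{l})^{k}$. Cuspidality of $\varrho_0$ then forces vanishing unless $\mcm^x\cap\mcu$ projects onto the full upper-triangular unipotent subgroup of every one of the $k$ factors of $\mcm^x$ and $\ul\psi_b^t$ restricts to a generic character on each factor. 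Matching simple root subgroups, the condition ``$\ul\psi_b^t$ charges every simple root of $\mcm^x$'' forces, since $\mcm^x$ decomposes $\{1,\dots,m\}$ into blocks of size $m_0$, that $m_0\mid m_j$ for each $j$, i.e.\ $\mcm\subset\mcm_t$ (condition~1); and, granted this, the condition ``$\ul\psi_b^t\rest_{\mcn^x\cap\mcu}=1$'' becomes $\mcn^x\cap\mcu\subset\mcn_t\cap\mcu$, which is exactly the statement that $x$ is of minimal length in $\mcp x\mcp_t$ with respect to $\mfS_k$ (condition~2). When both hold, $\mcm^x\cap\mcu$ is the Borel-unipotent $\mcu_0^{\times k}$ of $\mcm^x$ and $\ul\psi_b^t$ restricts to a product of generic characters $\ul\psi_0^{\boxtimes k}$, so $\mrhom_{\mcm^x\cap\mcu}(\varrho^x,\ul\psi_b^t)\cong\bigotimes_{i=1}^{k}\mrhom_{\mcu_0}(\varrho_0,\ul\psi_0)$ is one-dimensional by the uniqueness of the Gelfand--Graev model of $\mrgl_{m_0}(\bs{l})$.

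The main obstacle is the bookkeeping in the previous paragraph: proving \emph{precisely} that the two convenient conditions (``$\ul\psi_b^t$ charges all simple roots of $\mcm^x$'' and ``$\ul\psi_b^t$ trivial on $\mcn^x\cap\mcu$'') are equivalent to conditions~1 and~2 as stated, and in particular that for an $x$ of non-minimal length inside $\mcp x\mcp_t$ the space genuinely vanishes rather than merely failing the factorisation used above. This is exactly the combinatorics of $\theta_{\text{max}}$-fixed subgroups versus Bruhat cells controlled by \cite{secherre2016block}*{Lemma 4.2 and Proposition 5.6}, which is why those statements are needed here.
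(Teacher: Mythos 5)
Your proposal is correct and follows essentially the same route as the paper's proof: make $\ul{\psi}_b^t$ explicit (charged exactly on the simple root subgroups inside the $\mcm_t$-blocks), reduce via Mackey/Bruhat to block-permutation representatives in $W_0(\mfb)\cong\mfS_k$, use the Iwahori-type factorisation $\mcp^x\cap\mcu=(\mcm\cap\mcu)(\mcn^x\cap\mcu)$ together with cuspidality of $\varrho_0$ to extract exactly conditions (1) and (2), and get multiplicity one from the uniqueness of the Gelfand--Graev model over the finite field (the paper cites Silberger--Zink for this); your explicit killing of the $\mcp$-$\mcp$ double cosets with no $\mfS_k$-representative is in fact slightly more careful than the paper's one-line ``replace $x$ by a representative in $W_0(\mfb)$''. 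The only quibble is your appeal to \cite{secherre2016block}*{Lemma 4.2, Proposition 5.6}: those are $p$-adic statements about the simple character $\theta_{\text{max}}$ and are used in the paper only for Lemma \ref{lemmatechnical} and Proposition \ref{propthetachieqconverse}, not here --- for the present purely finite-group lemma the cuspidality argument you sketch already suffices, so the misplaced citation does not affect the logic.
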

	
	\begin{proof}
		
		Replacing $x$ by another representative in the same double coset $\mcp$-$\mcn^-$, we may assume without generality that $x$ is in $$W_0(\mfb)=\{(\delta_{\sigma(i)j}I_{m_0})_{ 1\leq i,j\leq k}\mid \sigma\in\mfS_k \}\subset\mcg,$$ which in particular normalizes $\mcm$. Restricting to $\mcn^x\cap\mcu$ and $\mcm^x\cap\mcu=\mcm\cap\mcu$, the condition $\mrhom_{\mcp^x\cap \mcu}(\varrho^x,\ul{\psi}_b^t)\neq 0$ is equivalent to  $$\ul{\psi}_b^t\rest_{\mcn^x\cap\mcu}=1\quad\text{and}\quad\ul{\psi}_b^t\rest_{\mcm\cap \mcu}\ \text{is a generic character of}\ \mcm\cap \mcu.$$
		
		From the expression of $\ul{\psi}_b^t$, the restriction  $\ul{\psi}_b^t\rest_{\mcm\cap \mcu}$ is a generic character of $\mcm\cap \mcu$ if and only if $\{m_1,\dots,m_l\}$ is a subset of $\{m_0,2m_0,\dots,(k-1)m_0\}$, or equivalently, for each $i$, we have $m_i=m_0k_i$ for some $k_i\in\mbn$. Now we assume this condition.
		
		Write $x=(\delta_{\sigma(i)j}I_{m_0})_{ 1\leq i,j\leq k}$ for some $\sigma\in\mfS_k$. Then, it is easy to see that $\ul{\psi}_b^t\rest_{\mcn^x\cap\mcu}=1$ is equivalent to the fact that $\sigma(j+1)>\sigma(j)$ for $j\neq k_1,\dots,k_l$. Otherwise, the related unipotent group $\mcu_{\alpha_{jm_0+1,jm_0}}$ related to the simple root $\alpha_{jm_0+1,jm_0}$ is in $\mcn^x\cap\mcu$ and $\ul{\psi}_b^t\rest_{\mcu_{\alpha_{jm_0+1,jm_0}}}\neq 1$. However, the above condition for $\sigma$ is equivalent to saying that $x$ is a representative of a $\mcp$-$\mcp_t$ double coset whose length in $\mcp\backslash \mcg/\mcp=\mcp\backslash \mcg/\mcn\cong \mfS_k$ is minimal. 
		
		Finally, the multiplicity one result follows from the uniqueness of Gelfand--Graev model over finite fields (\emph{cf.} \cite{silberger2000characters}*{Corollary 5.4}).
		
	\end{proof}
	
	\begin{corollary}
		
		Given $t$ as in the first condition of the lemma. Let $\ul{R}_t$ be a set of representatives of the $\mcp$-$\mcp_t$ double cosets, such that each representative has the minimal length with respect to $\mcp\backslash \mcg/\mcp=\mcp\backslash \mcg/\mcn\cong \mfS_k$ among all the elments in the same double coset. Then we have
		$$\ul{\mcv}^\varrho:=\mrhom_{\mcg}(\mrind_\mcp^\mcg(\varrho),\mrind_\mcu^\mcg(\ul{\psi}_b^t))\cong\bigoplus_{x\in \ul{R}_t}\mrhom_{\mcp^x\cap \mcu}(\varrho^x,\ul{\psi}_b^t)\cong\mbc^{\car{\ul{R}_t}},$$
		where $\ul{R}_t$ is of cardinality $$\car{\mcp\backslash \mcg/\mcp_t}=\car{W(\mcg,\mcm)/W(\mcm_t,\mcm)}=\car{\mfS_k/(\mfS_{k_1}\times\dots\times \mfS_{k_1})}=k!/(k_1!\cdot\dots\cdot k_{l}!).$$ 	
		Moreover, as a $\mch(\mcg,\varrho)$-module $\ul{\mcv}^\varrho$ is isomorphic to $\mch(\mcg,\varrho)\otimes_{\mch(\mcm_t,\varrho)}\varepsilon_{\mch(\mcm_t,\varrho)}$. Here, $\mch(\mcg,\varrho)$ (resp. $\mch(\mcm_t,\varrho)$) is the finite Hecke algebra whose Weyl group is $W(\mcg,\mcm)$ (resp. $W(\mcm_t,\mcm)$), and $\varepsilon_{\mch(\mcm_t,\varrho)}$ is the sign character of $\mch(\mcm_t,\varrho)$.
		
	\end{corollary}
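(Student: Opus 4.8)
The plan is to determine the dimension of $\ul{\mcv}^\varrho$ together with a distinguished basis, and then to identify it as an $\mch(\mcg,\varrho)$-module by passing to parabolic induction from $\mcm_t$. First I would apply Frobenius reciprocity and the Mackey formula to $\mrhom_{\mcg}(\mrind_\mcp^\mcg(\varrho),\mrind_\mcu^\mcg(\ul\psi_b^t))$, getting $\bigoplus_x \mrhom_{\mcp^x\cap\mcu}(\varrho^x,\ul\psi_b^t)$ over a set of $\mcp$-$\mcu$ double cosets; reducing the representatives into $W_0(\mfb)=\mfS_k$ and applying Lemma \ref{lemmacondtx-1nonzero}, the nonzero summands are exactly those indexed by $\ul R_t$ and each is one-dimensional, which yields the first displayed isomorphism $\ul{\mcv}^\varrho\cong\bigoplus_{x\in\ul R_t}\mrhom_{\mcp^x\cap\mcu}(\varrho^x,\ul\psi_b^t)\cong\mbc^{\car{\ul R_t}}$. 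The count $\car{\ul R_t}=\car{\mcp\backslash\mcg/\mcp_t}$ is then Bruhat-theoretic: because $\mcp$ has Levi $\mcm$ with all blocks of size $m_0$ and $\mcp\subset\mcp_t$, the $\mcp$-$\mcp_t$ double cosets are parametrised by the minimal-length coset representatives for $W(\mcg,\mcm)/W(\mcm_t,\mcm)\cong\mfS_k/(\mfS_{k_1}\times\dots\times\mfS_{k_l})$, which is precisely the set $\ul R_t$ and has cardinality $k!/(k_1!\cdots k_l!)$.

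For the $\mch(\mcg,\varrho)$-module structure I would pass through parabolic induction. From the explicit shape of $\ul\psi_b^t$ in Lemma \ref{lemmapsibnonzterocond} and the first condition of Lemma \ref{lemmacondtx-1nonzero}, the character $\ul\psi_b^t$ is trivial on the unipotent radical of $\mcp_t$ and restricts to a generic character $\ul\psi_{\mcm_t}$ of $\mcm_t\cap\mcu$; hence transitivity of induction and inflation gives $\mrind_\mcu^\mcg(\ul\psi_b^t)\cong\mrind_{\mcp_t}^\mcg(\mathrm{GG}_{\mcm_t})$, where $\mathrm{GG}_{\mcm_t}=\mrind_{\mcm_t\cap\mcu}^{\mcm_t}(\ul\psi_{\mcm_t})$ is the Gelfand--Graev representation of $\mcm_t$. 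Therefore $\ul{\mcv}^\varrho\cong\mrhom_\mcg(\mrind_\mcp^\mcg(\varrho),\mrind_{\mcp_t}^\mcg(\mathrm{GG}_{\mcm_t}))$. Now I would invoke the finite-group counterpart of the type theory recalled in Theorem \ref{thmheckealg} (in the spirit of Howlett--Lehrer): the equivalence $\mrrep_{[\mcm,\varrho]}(\mcg)\xrightarrow{\ \sim\ }\mrmod(\mch(\mcg,\varrho))$ is compatible with parabolic induction, sending $\mrind_{\mcp_t}^\mcg(-)$ to $\mch(\mcg,\varrho)\otimes_{\mch(\mcm_t,\varrho)}(-)$, and it sends the $[\mcm,\varrho]$-component of $\mathrm{GG}_{\mcm_t}$ to the sign character $\varepsilon_{\mch(\mcm_t,\varrho)}$. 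This last identification is a block-by-block consequence of the classical fact that over $\mrgl_N(\bs l)$ the Gelfand--Graev representation restricted to a cuspidal Bernstein block corresponds to the sign module of the associated type-$A$ finite Hecke algebra --- that block contains a unique generic irreducible, which occurs in the induced cuspidal representation with multiplicity one and corresponds to $\varepsilon$. Combining these identifications yields $\ul{\mcv}^\varrho\cong\mch(\mcg,\varrho)\otimes_{\mch(\mcm_t,\varrho)}\varepsilon_{\mch(\mcm_t,\varrho)}$, of dimension $[\mfS_k:\mfS_{k_1}\times\dots\times\mfS_{k_l}]=k!/(k_1!\cdots k_l!)$, consistently with the first step.

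The delicate point I expect is making this second step rigorous rather than a mere dimension count: one must verify that the equivalence genuinely intertwines parabolic induction with $\mch(\mcg,\varrho)\otimes_{\mch(\mcm_t,\varrho)}(-)$ in the direction needed, and that the $[\mcm,\varrho]$-component of $\mathrm{GG}_{\mcm_t}$ really is the sign module and not another module of the same dimension. A more self-contained alternative, staying within the combinatorics already set up, is to work directly with the basis $\{\Phi_x\}_{x\in\ul R_t}$ from the first step: show that the vector $\Phi_1$ attached to the identity double coset is a sign eigenvector for the parabolic subalgebra $\mch(\mcm_t,\varrho)\subset\mch(\mcg,\varrho)$ --- convolution by a generator $T_s$ with $s\in W(\mcm_t,\mcm)$ cannot enlarge the $\mcp$-$\mcp_t$ double-coset support of $\Phi_1$, so $T_s\ast\Phi_1\in\mbc\Phi_1$, and the quadratic relation $(T_s+1)(T_s-\bs q_0)=0$ with $\Phi_1\neq 0$ forces the eigenvalue $-1$ --- and that $\{T_x\ast\Phi_1\}_{x\in\ul R_t}$ spans $\ul{\mcv}^\varrho$, so that $\Phi_1$ generates the module. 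The universal property of the induced module then produces a surjection $\mch(\mcg,\varrho)\otimes_{\mch(\mcm_t,\varrho)}\varepsilon_{\mch(\mcm_t,\varrho)}\twoheadrightarrow\ul{\mcv}^\varrho$, which is an isomorphism by the dimension count of the first step. I would present whichever of the two routes is cleaner given the conventions already established in the paper.
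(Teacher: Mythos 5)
Your main route is essentially the paper's proof: the first isomorphism and the count come from the Mackey computation together with the preceding lemma, and the module structure is obtained exactly as in the paper via the compatibility of $\bs{\mathrm{M}}_{\varrho}$ with parabolic induction, the identity $\mrind_{\mcp_t}^{\mcg}(\mrind_{\mcu\cap\mcm_t}^{\mcm_t}(\ul{\psi}_b^t))\cong\mrind_{\mcu}^{\mcg}(\ul{\psi}_b^t)$, and the fact (Silberger--Zink, cited in the paper) that the relevant block component of the Gelfand--Graev representation of $\mcm_t$ is the generalized Steinberg representation, corresponding to the sign character of $\mch(\mcm_t,\varrho)$. Only your optional self-contained alternative has a small gap: the quadratic relation alone gives eigenvalue $-1$ or $\bs{q}_0$ for $T_s\ast\Phi_1$, so excluding $\bs{q}_0$ would require an additional explicit support or convolution computation.
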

	
	\begin{proof}
		
		We only need to explain the last claim. Like the $p$-adic case, in the finite case we have the following commutative diagram (\emph{cf.} \eqref{eqMrho} for the definition of $\bs{\mathrm{M}}_{\varrho}$)
		$$\xymatrix{
			\mrrep(\mcg)  \ar[r]^-{\bs{\mathrm{M}}_{\varrho}} & \mrmod(\mch(\mcg,\varrho)) \\ 
			\mrrep(\mcm_t) \ar[r]_-{\bs{\mathrm{M}}_{\varrho}} \ar[u]^-{\mrind_{\mcp_t}^{\mcg}} & \mrmod(\mch(\mcm_t,\varrho))  \ar[u]_-{\mrind_{\mch(\mcm_t,\varrho)}^{\mch(\mcg,\varrho)}}
		}$$ 
		and the isomorphism 
		$$\mrind_{\mcp_t}^{\mcg}(\mrind_{\mcu\cap \mcm_t}^{\mcm_t}(\ul{\psi}_b^t))\cong \mrind_\mcu^\mcg(\ul{\psi}_b^t).$$
		Thus, we only need to show that $$\ul{\mcv}_{\mcm_t}^\varrho:=\mrhom_{\mcm_t}(\mrind_{\mcp\cap\mcm_t}^{\mcm_t}(\varrho),\mrind_{\mcu\cap \mcm_t}^{\mcm_t}(\ul{\psi}_b^t))$$ 
		being realized as a $\mch(\mcm_t,\varrho)$-module is isomorphic to the sign character $\varepsilon_{\mch(\mcm_t,\varrho)}$. 
		Indeed, by \cite{silberger2000characters}*{Corollary 5.7} the generalize Steinberg representation, denoted by $\mathrm{St}(\mcm_t,\varrho)$, is the unique irreducible generic subrepresentation of $\mrind_{\mcp\cap\mcm_t}^{\mcm_t}(\varrho)$. By \cite{silberger2000characters}*{Proposition 4.2}, we have $$\ul{\mcv}_{\mcm_t}^\varrho\cong\mrhom_{\mcm_t}(\mrind_{\mcp\cap\mcm_t}^{\mcm_t}(\varrho),\mathrm{St}(\mcm_t,\varrho))\cong \varepsilon_{\mch(\mcm_t,\varrho)}$$
		as a $\mch(\mcm_t,\varrho)$-module.
	\end{proof}
	
	Following the above lemma and corollary, we define $T(\mfb)_{\leq}=T(\mfb)\cap T_{B,\leq}$. For $t\in T(\mfb)_{\leq}$, let $\ul{R}_t$ be a set of representatives the $\mcp$-$\mcp_t$ double cosets as above. Let $R_t$ to be a set of elements in $U(\bmax)$, whose image in $\mcg\cong U(\bmax)/U^1(\bmax)$ is $\ul{R}_t$. 
	
	We come back to the study of $\mcv^{\lambda'}$. We analyze its subspace
	$$\mcv_B^{\lambda'}:=\bigoplus_{g\in U\backslash UB^\times J'/J'}\mrhom_{\ol{J'}}(\lambda',\mrind_{U^g\cap \ol{J'}}^{\ol{J'}}(\psi^g)).$$
	The following proposition follows directly from our discuss above.
	
	\begin{proposition}
		
		\begin{enumerate}
			\item We have $\mcv_B^{\lambda'}\cong\bigoplus_{t\in T(\mfb)_{\leq}}\bigoplus_{x\in R_t}\mrhom_{\ol{J'^{x}}}(\lambda'^{x},\mrind_{U^t\cap \ol{J'^{x}}}^{\ol{J'^{x}}}(\psi^t)).$
			
			\item For each $t\in T(\mfb)_{\leq}$ and $x\in R_t$ as above, we have  $$\mrhom_{\ol{J'^{x}}}(\lambda'^{x},\mrind_{U^t\cap \ol{J'^{x}}}^{\ol{J'^{x}}}(\psi^t))\cong\mrhom_{ J_{\text{max}}\cap U^t}(\kappamax,\chi^t)\otimes_\mbc \mrhom_{\mcp^x\cap\mcu}(\varrho^x,\ul{\psi}_b^t),$$
			which is isomorphic to $\mbc$. 
			
			\item For $t\in T(\mfb)_{\leq}$, define $$\mcv_{B,t}^{\lambda'}:=\bigoplus_{g\in U\backslash J_{\text{max}}/J'}\mrhom_{\ol{J'}}(\lambda',\mrind_{U^{tg}\cap \ol{J'}}^{\ol{J'}}(\psi^{tg}))\cong\bigoplus_{x\in R_t}\mrhom_{\ol{J'^{x}}}(\lambda'^{x},\mrind_{U^t\cap \ol{J'^{x}}}^{\ol{J'^{x}}}(\psi^t)).$$ 
			Then we have
			$$\mcv_B^{\lambda'}\cong\bigoplus_{t\in T(\mfb)_{\leq}}\mcv_{B,t}^{\lambda'}$$
			and
			$$\mcv_{B,t}^{\lambda'}\cong\mrhom_{ J_{\text{max}}\cap U^t}(\kappamax,\chi^t)\otimes_\mbc \mrhom_{\mcg}(\mrind_\mcp^\mcg(\varrho),\mrind_\mcu^\mcg(\ul{\psi}_b^t)).$$
			Moreover, $\mcv_{B,t}^{\lambda'}$ is a $\mch_0\cong\mch(\ol{ J_{\text{max}}},\lambda')$-module isomorphic to $\mch_0\otimes_{\mch_{\mco_t}}\varepsilon_{\mco_t}$, where $\mco_t$ denotes the $\mfS_k$-orbit of $t$.
		\end{enumerate}
		
	\end{proposition}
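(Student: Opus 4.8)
The plan is to read off all three statements from the double-coset analysis already carried out in Section~\ref{sectionH0module}, between Lemma~\ref{lemmathetachidist} and the corollary of Lemma~\ref{lemmacondtx-1nonzero}; as the text says, the proposition is essentially a bookkeeping of that discussion. First, for part~(1), I would begin from the Mackey--Frobenius decomposition $\mcv_B^{\lambda'}\cong\bigoplus_{g}\mrhom_{\ol{J'}\cap U^g}(\lambda',\psi^g)$ over $g\in U\backslash UB^\times J'/J'$ recorded at the start of the section, and use the Iwasawa decomposition $B^\times=U_BT_BU(\bmax)$ together with $U_B\subset U$ and $U(\bmax)\cap J'=U(\mfb)$ to replace each double coset by a representative $g=tx^{-1}$ with $t\in T_B$ and $x\in U(\bmax)$; conjugating by $x$ turns the $g$-summand into $\mrhom_{\ol{J'^{x}}}(\lambda'^{x},\mrind_{U^t\cap\ol{J'^{x}}}^{\ol{J'^{x}}}(\psi^t))$. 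Splitting $\lambda'^{x}$ along $\lambda'=(1_{\mu_n}\cdot\kappamax\rest_{\ol{J'}})\otimes(\epsilon\cdot\rho')$ and $\psi^t=\chi^t\psi_b^t$, the $\kappamax$-factor is one-dimensional precisely when $g\in UB^\times\Jmax$, by the corollary of Lemma~\ref{lemmathetachidist} and its converse Proposition~\ref{propthetachieqconverse} (and this always holds here since $UB^\times J'\subset UB^\times\Jmax$); the $\rho'$-factor is governed by Lemmas~\ref{lemmapsibnonzterocond} and~\ref{lemmacondtx-1nonzero}, its nonvanishing forcing first $t\in T_{B,\leq}$, then --- combining the block condition of Lemma~\ref{lemmacondtx-1nonzero} with $t\in T_{B,\leq}$, which makes the $E$-valuations of the diagonal of $t$ constant on each block of size $m_0$ --- that $t\in T(\mfb)_{\leq}$ after absorbing a diagonal matrix with entries in $\mfo_E^\times$ into $J'$, and finally $x\in R_t$. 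For part~(2), for each such $(t,x)$ the same factorisation gives $\mrhom_{\ol{J'^{x}}}(\lambda'^{x},\mrind_{U^t\cap\ol{J'^{x}}}^{\ol{J'^{x}}}(\psi^t))\cong\mrhom_{\Jmax\cap U^t}(\kappamax,\chi^t)\otimes_\mbc\mrhom_{\ol{J'^{x}}\cap U^t}(\rho'^{x},\psi_b^t)$ (using that $x\in U(\bmax)\subset\Jmax$ is inner for $\kappamax$), the first factor being $\mbc$ by the corollary of Lemma~\ref{lemmathetachidist} and the second descending --- via the triviality of $\psi_b^t$ on $\Jonemax\cap U^t$ from Lemma~\ref{lemmapsibnonzterocond} --- to $\mrhom_{\mcp^x\cap\mcu}(\varrho^x,\ul{\psi}_b^t)$, which is one-dimensional by Lemma~\ref{lemmacondtx-1nonzero}; assembling the surviving summands, and checking that the pairs $(t,x)$ with $t\in T(\mfb)_{\leq}$, $x\in R_t$ index pairwise distinct double cosets, yields the isomorphism of part~(1).

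For part~(3), I would first regroup the double sum of part~(1) by the value of $t$, so that $\mcv_B^{\lambda'}\cong\bigoplus_{t\in T(\mfb)_{\leq}}\mcv_{B,t}^{\lambda'}$ and, term by term, each $\mcv_{B,t}^{\lambda'}$ is the sum over $x\in R_t$ in the statement (the remaining summands in the definition of $\mcv_{B,t}^{\lambda'}$ vanishing by what was shown in~(1)). For the $\mch_0$-module structure, the key observation is that $\mcv_{B,t}^{\lambda'}$ depends only on the $(\ol{\Jmax},\lambda')$-datum and the Gelfand--Graev-type character $\psi^t$ on $U^t\cap\ol{\Jmax}$, and that $\mch_0=\mch(\ol{\Jmax},\lambda')$ is supported inside $\ol{\Jmax}$, so this subspace of $\mcv^{\lambda'}$ is $\mch_0$-stable with the induced action. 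Passing to the finite quotient $\mcg\cong\ol{\Jmax}/\Jonemax$ through the $\kappamax$-factorisation --- the projection formula $\mrind_{\ol{J'}}^{\ol{\Jmax}}(\lambda')\cong(1_{\mu_n}\cdot\kappamax)\otimes(\epsilon\cdot\mrinf_{\mcg}^{\Jmax}(\mrind_{\mcp}^{\mcg}\varrho))$ together with the irreducibility of $\kappamax$ (whose Hecke algebra over $\Jmax$ is trivial) --- identifies $\mcv_{B,t}^{\lambda'}$, under the isomorphism $\mch_0\cong\mch(\mcg,\varrho)$ of Theorem~\ref{thmheckealg}.(1), with $\mrhom_{\Jmax\cap U^t}(\kappamax,\chi^t)\otimes_\mbc\ul{\mcv}^\varrho$ for the finite Gelfand--Graev representation $\mrind_{\mcu}^{\mcg}(\ul{\psi}_b^t)$; this is the last displayed formula of~(3). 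By the corollary of Lemma~\ref{lemmacondtx-1nonzero} this module is $\mch(\mcg,\varrho)\otimes_{\mch(\mcm_t,\varrho)}\varepsilon_{\mch(\mcm_t,\varrho)}$, and since $W(\mcm_t,\mcm)=\mfS_{k_1}\times\dots\times\mfS_{k_l}$ is the stabilizer $W_{\mco_t}$ of $t$ in $\mfS_k$, so that $\mch(\mcm_t,\varrho)=\mch_{\mco_t}$ and $\varepsilon_{\mch(\mcm_t,\varrho)}=\varepsilon_{\mco_t}$, we obtain $\mcv_{B,t}^{\lambda'}\cong\mch_0\otimes_{\mch_{\mco_t}}\varepsilon_{\mco_t}$.

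The only genuinely non-formal inputs, and where I expect the real work to sit, are two. The main obstacle is the transversality claim used in part~(1): that the pairs $(t,x)$ with $t\in T(\mfb)_{\leq}$, $x\in R_t$ run over the nonzero-contributing $U$-$J'$ double cosets inside $UB^\times J'$ without repetition; this is the combinatorics of the (extended) affine Weyl group $W(\mfb,\varrho)$, equivalently the Iwasawa/elementary-divisor bookkeeping together with the $\mcp$-$\mcp_t$ double-coset description of Lemma~\ref{lemmacondtx-1nonzero}. The second point is the compatibility asserted in the previous paragraph between the $\mch_0$-action coming from $\ol{G}$-level convolution on the subspace $\mcv_{B,t}^{\lambda'}\subset\mcv^{\lambda'}$ and the $\mch(\mcg,\varrho)$-action on $\ul{\mcv}^\varrho$ coming from finite-group convolution; this rests on the Bushnell--Kutzko-type Hecke-algebra comparison for $(\ol{\Jmax},\lambda')$ recorded in Theorem~\ref{thmheckealg}.(1) together with the projection formula above.
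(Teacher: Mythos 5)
Your proposal is correct and takes essentially the same route as the paper: the paper offers no separate argument for this proposition (it is stated as following directly from the preceding discussion), and your write-up is precisely that bookkeeping — the Mackey/Iwasawa reduction to representatives $tx^{-1}$, the corollary of Lemma \ref{lemmathetachidist} for the $\kappamax$-factor, Lemmas \ref{lemmapsibnonzterocond} and \ref{lemmacondtx-1nonzero} for the $\rho'$-factor, and the finite Gelfand--Graev corollary together with Theorem \ref{thmheckealg}.(1) for the $\mch_0$-module structure. The double-coset disjointness you single out as the remaining non-formal input is likewise left implicit in the paper, so your treatment matches the paper's own level of detail.
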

	
	The following corollary is also direct. Recall that for $\Phi\in\mcv^{\lambda'}$, the support $\mrsupp(\Phi\rest_{\lambda'})$ is defined as in \eqref{eqsuppphi}.
	
	\begin{corollary}\label{corPhi't}
		
		\begin{enumerate}
			\item For $t\in T(\mfb)_{\leq}$, there exists a non-zero $\Phi_t'\in\mcv^{\lambda'}$ such that $\mrsupp(\Phi_t'\rest_{\lambda'})=\ol{J'}tU$. Such $\Phi_t'$ is unique up to a scalar.
			
			\item The $\mch_0$-module generated by $\Phi_t'$ is isomorphic to $\mch_0\otimes_{\mch_{\mco_t}}\varepsilon_{\mco_t}$, where $\mco_t$ denotes the $\mfS_k$-orbit of $t$ and $\mch_{\mco_t}$ the finite Hecke algebra related to the stabilizer of $\mco_t$ in $\mfS_k$. In particular, for $x\in R_t$ representing $w\in W(\mcg,\mcm)=\mfS_k$, we have $\mrsupp(T_w\ast\Phi_t'\rest_{\lambda'})= \ol{J'}tx^{-1}U$, where $T_w\in\mch_0$ is the operator related to $w$.
			
			\item For different $t_1,\dots,t_k\in T(\mfb)_{\leq}$ we construct the related functions  $\Phi_{t_1}',\dots,\Phi_{t_k}'$ as in (1). Then the vector spaces $\mch_0\ast\Phi_{t_1}',\dots,\mch_0\ast\Phi_{t_k}'$ are linearly independent.
		\end{enumerate}

	\end{corollary}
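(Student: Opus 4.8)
The plan is to deduce all three assertions from the structural description of $\mcv_B^{\lambda'}$ established in the Proposition immediately preceding the statement, using throughout the following elementary observation: if a non-zero $\Phi\in\mcv^{\lambda'}$ has $\mrsupp(\Phi\rest_{\lambda'})$ contained in a single double coset of the form $\ol{J'}gU$, then in fact $\mrsupp(\Phi\rest_{\lambda'})=\ol{J'}gU$, since by \eqref{eqsuppphi} this support is a union of such double cosets and it is non-empty because $\Phi\neq 0$.

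For part (1), recall from the Proposition the decomposition $\mcv_{B,t}^{\lambda'}\cong\bigoplus_{x\in R_t}\mrhom_{\ol{J'^{x}}}(\lambda'^{x},\mrind_{U^{t}\cap\ol{J'^{x}}}^{\ol{J'^{x}}}(\psi^{t}))$ into one-dimensional summands. I would take $\Phi_t'$ to be a non-zero element of the summand indexed by the trivial double coset $x=1$, viewed inside $\mcv^{\lambda'}$. Then $\mrsupp(\Phi_t'\rest_{\lambda'})$ is contained in the double coset through $t$, hence equals $\ol{J'}tU$ by the observation above, and $\Phi_t'$ is unique up to a scalar since that summand is one-dimensional. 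Part (3) is then immediate as well: by the Proposition the subspaces $\mcv_{B,t}^{\lambda'}$, as $t$ runs over $T(\mfb)_{\leq}$, sit in direct sum inside $\mcv_B^{\lambda'}\subseteq\mcv^{\lambda'}$ (pairwise distinct elements of $T(\mfb)_{\leq}$ index pairwise distinct $U$--$\Jmax$ double cosets), and by part (2) one has $\mch_0\ast\Phi_{t_i}'\subseteq\mcv_{B,t_i}^{\lambda'}$; hence $\mch_0\ast\Phi_{t_1}',\dots,\mch_0\ast\Phi_{t_k}'$ are linearly independent.

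For part (2), I would first note that $\mch_0\cong\mch(\ol{\Jmax},\lambda')$ is supported on $\ol{\Jmax}$, so convolution by $\mch_0$ only moves the support of an element of $\mcv^{\lambda'}$ within $\ol{\Jmax}$ on the relevant side; in particular $\mch_0$ preserves $\mcv_{B,t}^{\lambda'}$ and $\mch_0\ast\Phi_t'\subseteq\mcv_{B,t}^{\lambda'}$. The crux is to compute $\mrsupp(T_w\ast\Phi_t'\rest_{\lambda'})$ for $w\in W(\mcg,\mcm)=\mfS_k$: as $T_w\in\mch_0$ is supported on the single double coset $\ol{J'}\wt{w}\ol{J'}$ with $\wt{w}\in W_0(\mfb)$ a lift of $w$, and $\Phi_t'$ on $\ol{J'}tU$, the convolution $T_w\ast\Phi_t'$ is non-zero ($T_w$ is invertible in $\mch_0$) and its support should be contained in the single double coset $\ol{J'}tx^{-1}U$ for $x\in R_t$ representing $w$, at least when $w$ is of minimal length in its coset modulo the stabilizer $W_{\mco_t}$; the observation above then upgrades this to an equality. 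I would prove the support containment by translating it, via the identification $\mch_0\cong\mch(\mcg,\varrho')$ of Theorem \ref{thmheckealg} together with the Iwahori-type decompositions and the matching of double cosets from \S\ref{sectionH0module}, into a statement about the action of the finite Hecke algebra $\mch(\mcg,\varrho')$ on $\ul{\mcv}^{\varrho}=\bigoplus_{x\in\ul{R}_t}\mrhom_{\mcp^{x}\cap\mcu}(\varrho^{x},\ul{\psi}_b^{t})$, namely that $T_w$ carries the line indexed by the trivial coset onto the one indexed by $x\in\ul{R}_t$ representing $w$. Granting this, the $\car{R_t}$ elements $T_w\ast\Phi_t'$ (for $w$ ranging over the minimal-length representatives of $\mfS_k/W_{\mco_t}$) have pairwise disjoint supports, hence are linearly independent; since $\dim_\mbc\mcv_{B,t}^{\lambda'}=\car{R_t}$ by the Proposition, this forces $\mch_0\ast\Phi_t'=\mcv_{B,t}^{\lambda'}$, which is isomorphic to $\mch_0\otimes_{\mch_{\mco_t}}\varepsilon_{\mco_t}$ by the same Proposition, and the support formula is exactly what we proved.

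The main obstacle is precisely this support computation in part (2): one must rule out that $T_w\ast\Phi_t'$ spreads its support over several double cosets and show it lands in exactly one. The reduction to the finite group $\mcg$ should make it tractable, since there the relevant input is the well-understood permutation of Bruhat cells by a type-$A$ finite Hecke algebra acting on a (possibly degenerate) Gelfand--Graev module, for which the needed multiplicity-one statement is the corollary of Lemma \ref{lemmacondtx-1nonzero}; the bookkeeping linking the $p$-adic supports $\ol{J'}tx^{-1}U$ to the finite cells indexed by $\ul{R}_t$ is where most of the care will be required.
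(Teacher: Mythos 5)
Your treatment of parts (1) and (3) is correct and matches what the paper intends (the paper offers no proof, calling the corollary ``direct'' from the preceding Proposition): the Mackey summand attached to the coset $UtJ'$ is the one-dimensional space $\mrhom_{\ol{J'}\cap U^{t}}(\lambda',\psi^{t})$, which gives existence, the support equality and uniqueness in (1), and linear independence in (3) follows from $\mcv_B^{\lambda'}=\bigoplus_{t}\mcv_{B,t}^{\lambda'}$ together with $\mch_0\ast\Phi_{t}'\subseteq\mcv_{B,t}^{\lambda'}$ (the latter because $\mch_0$ is supported in $\ol{\Jmax}$).

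The gap is in part (2), precisely at the point you ``grant'': the assertion that $T_w$, for $w$ a minimal-length representative of $\mfS_k/W_{\mco_t}$, carries the line at the trivial coset onto the line indexed by $x\in R_t$ representing $w$. This is not a consequence of the multiplicity-one statement of Lemma \ref{lemmacondtx-1nonzero}, which only says that each geometric line is one-dimensional and says nothing about how the Hecke operators move between the lines; nor can the required containment be extracted from a support estimate alone, since the reduction $\mcu$ of $U\cap U(\bmax)$ is in opposite position to $\mcp$ (the reduction of $J'$), so $Ut\,\ol{J'}\wt{w}\ol{J'}$ meets several $U$--$\ol{J'}$ double cosets and knowing that $T_w$ lives on a single $\ol{J'}$-double coset does not pin $T_w\ast\Phi_t'$ to one coset. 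Since your dimension count deducing $\mch_0\ast\Phi_t'=\mcv_{B,t}^{\lambda'}$ rests entirely on this granted claim, the central content of (2) --- that $\Phi_t'$ generates the cyclic module $\mcv_{B,t}^{\lambda'}\cong\mch_0\otimes_{\mch_{\mco_t}}\varepsilon_{\mco_t}$, which is what is actually used later in the paper --- is left unproved. The natural repair, and the route suggested by the Proposition and the finite-field corollary before it, goes in the opposite direction: the $\mch_0$-module structure of the whole space $\mcv_{B,t}^{\lambda'}$ is already known there (via the commutative diagram for $\mrind_{\mcp_t}^{\mcg}$ and the Silberger--Zink results), so one only has to identify $\Phi_t'$ with the canonical generator $1\otimes 1$ of $\mch_0\otimes_{\mch_{\mco_t}}\varepsilon_{\mco_t}$; this follows because under Frobenius reciprocity for $\mrind_{\mcp_t}^{\mcg}$ the generator corresponds to a functional whose image is supported in $\mcu\mcp_t$, and the only element of $\ul{R}_t$ lying in $\mcp_t$ is the identity. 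Then $T_w\ast\Phi_t'$ corresponds to $T_w\otimes 1=\pm\,T_x\otimes 1$, which gives the module statement of (2) without first knowing the exact supports; the support formula itself still requires the coset bookkeeping (matching the basis $T_x\otimes 1$ with the geometric lines), which your proposal also does not supply.
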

	
	Finally, we have the following  bonus of Proposition \ref{propthetachieqconverse}, although it will not be used to our discussion later on.
	
	\begin{corollary}
		
		We have $\mcv^{\lambda'}\cong\mcv_B^{\lambda'} $.
		
	\end{corollary}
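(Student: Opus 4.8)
The plan is to prove $\mcv^{\lambda'}\subseteq\mcv_B^{\lambda'}$; since $\mcv_B^{\lambda'}$ is by definition the direct summand of the Mackey decomposition
\[
\mcv^{\lambda'}\cong\bigoplus_{g\in U\backslash G/J'}\mrhom_{\ol{J'}\cap U^g}(\lambda',\psi^g)
\]
recorded at the beginning of \S\ref{sectionH0module} that is indexed by the double cosets $UgJ'$ with $g\in UB^\times J'$, it suffices to show that every double coset contributing a non-zero term satisfies $g\in UB^\times J'$. One first notes that $UB^\times J'=UB^\times J_{\text{max}}$: indeed $J'=U(\mfb)J_{\text{max}}^{1}$ and $J_{\text{max}}=U(\bmax)J_{\text{max}}^{1}$ with $U(\mfb),U(\bmax)\subset B^\times$, so both equal $UB^\times J_{\text{max}}^{1}$.

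So let $g\in G$ with $\mrhom_{\ol{J'}\cap U^g}(\lambda',\psi^g)\neq 0$; I must show $g\in UB^\times J_{\text{max}}$. By the tensor factorisation recorded at the start of \S\ref{sectionH0module},
\[
\mrhom_{\ol{J'}\cap U^g}(\lambda',\psi^g)\cong\mrhom_{\ol{J'}\cap U^g}(\kappamax,\chi^g)\otimes_{\mbc}\mrhom_{\ol{J'}\cap U^g}(\rho',\psi_b^g),
\]
the left factor on the right-hand side is non-zero; restricting further to the pro-$p$-subgroup $H_{\text{max}}^{1}\cap U^g$ (on which the cover splits canonically, so the $\mu_n$-bookkeeping is irrelevant) gives $\mrhom_{H_{\text{max}}^{1}\cap U^g}(\kappamax,\chi^g)\neq 0$. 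Since $\kappamax\rest_{H_{\text{max}}^{1}}$ is a multiple of $\theta_{\text{max}}$ (by \eqref{eqetathetamax}, $\eta_{\text{max}}$ being the Heisenberg representation of $\theta_{\text{max}}$), this forces the equality of characters $\chi^g\rest_{H_{\text{max}}^{1}\cap U^g}=\theta_{\text{max}}\rest_{H_{\text{max}}^{1}\cap U^g}$, and then Proposition \ref{propthetachieqconverse} yields $g\in UB^\times J_{\text{max}}$, as desired. Hence $\mcv^{\lambda'}\cong\mcv_B^{\lambda'}$.

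The only thing that needs care is transporting the non-vanishing hypothesis to the hypothesis of Proposition \ref{propthetachieqconverse}: naively restricting $\mrhom_{\ol{J'}\cap U^g}(\lambda',\psi^g)$ to $H_{\text{max}}^{1}\cap U^g$ would produce the equality $\psi^g\rest_{H_{\text{max}}^{1}\cap U^g}=\theta_{\text{max}}\rest_{H_{\text{max}}^{1}\cap U^g}$, and one would still have to compare $\psi^g$ with $\chi^g$, i.e.\ to show that $\psi_b^g$ is trivial on the a priori badly positioned subgroup $H_{\text{max}}^{1}\cap U^g$. Isolating the ``$\kappamax$-versus-$\chi$'' factor via the tensor factorisation of \S\ref{sectionH0module} sidesteps this entirely, since that factor already involves $\chi^g$ directly. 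The substantive input here is Proposition \ref{propthetachieqconverse} — which via Lemma \ref{lemmatechnical} rests on the uniqueness of the Whittaker model of a cuspidal representation of the linear group $M_0$ — and this is already established, so no further work is required.
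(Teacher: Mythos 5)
Your proof is correct and takes essentially the same route as the paper's own (very terse) argument: restrict a non-vanishing Mackey summand to $\Honemax\cap U^g$ to get $\chi^g\rest_{\Honemax\cap U^g}=\theta_{\text{max}}\rest_{\Honemax\cap U^g}$, apply Proposition \ref{propthetachieqconverse}, and use $UB^\times J_{\text{max}}=UB^\times J'$. Your extra care in isolating the $\kappamax$-factor via the tensor factorisation (so the hypothesis involves $\chi^g$ rather than $\psi^g$) is precisely the inference the paper itself makes, e.g.\ in the proof of Proposition \ref{propPhitiPhi'tivarphiti}, so nothing further is needed.
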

	
	\begin{proof}
		
		Consider a double coset $UgJ'$ such that $\mrhom_{\ol{J'}}(\lambda',\mrind_{U^g\cap \ol{J'}}^{\ol{J'}}(\psi^g))\neq 0$. Restricting the hom-space to $H_{\text{max}}^{1}$, we have
		$g\in UB^{\times}J_{\text{max}}=UB^{\times}J'$. Thus we finish the proof.
		
	\end{proof}
	
	\section{$\mch$-module structure of $\mcv^{\lambda}$}\label{sectionHmodulemcv}
	
	In this section, we consider $\mcv^{\lambda}$ as an $\mch=\mch(\ol{G},\lambda)$-module and finish the proof of Theorem \ref{thmmain}. We first consider general covers, and then focus on Kazhdan--Patterson covers or the Savin cover.
	
	\subsection{General results}
	
	We first recall that since $\mrind_{J_P}^{J}(\lambda_P)=\lambda$, the map $\mcv^{\lambda_P}\mapsto \mcv^{\lambda}$ is equivariant with respect to the $\mch(\ol{G},\lambda_P)\cong\mch(\ol{G},\lambda)$-action. So we only need to study $\mcv^{\lambda_P}$ as an $\mch(\ol{G},\lambda_P)$-module.
	
	Our goal is to study the space
	$$\mcv^{\lambda_P}=\mrhom_{\ol{G}}(\mrind_{\ol{J_P}}^{\ol{G}}(\lambda_P),\mrind_{U}^{\ol{G}}(\psi)).$$
	Using the Frobenius reciprocity and Mackey theorem, we have
	\begin{equation*}
		\begin{aligned}
			\mrhom_{\ol{G}}(\mrind_{\ol{J_P}}^{\ol{G}}(\lambda_P),\mrind_{U}^{\ol{G}}(\psi))\cong\mrhom_{\ol{J_P}}(\lambda_P,\mrind_{U}^{\ol{G}}(\psi)\rest_{\ol{J_p}})&\cong\bigoplus_{g\in U\backslash G/J_P}\mrhom_{\ol{J_P}}(\lambda_P,\mrind_{U^g\cap \ol{J_P}}^{\ol{J_P}}(\psi^g))\\
			&\cong\bigoplus_{g\in U\backslash G/J_P}\mrhom_{\ol{J_P}\cap U^g}(\lambda_P,\psi^g).
		\end{aligned}
	\end{equation*}
	
	\begin{proposition}\label{propPhit}
		
		For $t\in T(\mfb)_{\leq}$, the space $\mrhom_{\ol{J_P}\cap U^t}(\lambda_P,\psi^t)$ is isomorphic to $\mbc$. As a result, there exists a non-zero $\Phi_t\in\mcv^{\lambda_P}$, unique up to a scalar, such that $\mrsupp(\Phi_t\rest_{\lambda_P})=\ol{J_P}tU$.
		
	\end{proposition}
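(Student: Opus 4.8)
The plan is to carry out, for the single element $t\in T(\mfb)_{\leq}$ and the pair $(\ol{J_P},\lambda_P)$, the analogue of the local computation already done for $(\ol{J'},\lambda')$ in Section~\ref{sectionH0module}. By Frobenius reciprocity and the Mackey formula one has
$$\mcv^{\lambda_P}\cong\bigoplus_{g\in U\backslash G/J_P}\mrhom_{\ol{J_P}\cap U^g}(\lambda_P,\psi^g),$$
so it is enough to show that the summand indexed by $g=t$ is one-dimensional. Granting this, the ``as a result'' clause is immediate: a nonzero vector of this summand corresponds, under Frobenius reciprocity, to a nonzero $\Phi_t\in\mcv^{\lambda_P}$ whose restriction to $\lambda_P$ is supported on the single $U$--$\ol{J_P}$ double coset $\ol{J_P}tU$ in the sense of \eqref{eqsuppphi}, and uniqueness up to a scalar is exactly the one-dimensionality.

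First I would describe $\ol{J_P}\cap U^t$. Since we chose $\mcf\supset\mcf_0^-$ (Theorem~\ref{thmsimpletypewhittaker}), we have $N^-\subset U$ and $U\cap N=\{1\}$; moreover $t\in T(\mfb)$ is block-scalar (equal to $\varpi_E^{s_i}I_{m_0}$ on $V^i$), so it lies in the centre of $M$ and normalizes $N$ and $N^-$. Hence conjugation by $t$ fixes $M$ pointwise and satisfies $U^t\cap N^-=N^-$, $U^t\cap N=\{1\}$; combining this with the Iwahori decomposition $J_P=(H^1\cap N^-)J_M(J\cap N)$ yields, independently of $t$,
$$\ol{J_P}\cap U^t=(H^1\cap N^-)\rtimes(J_M\cap U_M).$$
This group is compact pro-$p$, so $\mu_n$ splits off, $\lambda_P$ restricts there to $(\kappa_P\otimes\rho_P)\rest_{J_P\cap U^t}$, and $\psi^t=\chi^t\psi_b^t$.

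Next, because $\chi\rest_{N^-}=1$, $\psi_b^t\rest_{J_P^1\cap U^t}=1$ (Lemma~\ref{lemmapsibnonzterocond}, valid as $t\in T_{B,\leq}$), $\kappa_P\rest_{H^1}$ is a multiple of $\theta$ which is trivial on $H^1\cap N^-$, $\kappa_P\rest_{J_M}=\kappa_M$, and $\rho_P=\mrinf_{\mcm}^{J_P}(\varrho)$, I would take $(H^1\cap N^-)$-invariants and then reduce modulo $J_M^1\cap U_M$ --- this is the same mechanism as in the proof of Lemma~\ref{lemmaVlambdasupport}.(1), amounting to the fact that the Jacquet restriction of a $\beta$-extension along $J\cap N^-$ recovers $\kappa_M$. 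One obtains a tensor factorisation
$$\mrhom_{\ol{J_P}\cap U^t}(\lambda_P,\psi^t)\cong\mrhom_{J_M\cap U_M}(\kappa_M,\chi)\otimes_\mbc\mrhom_{\mcm\cap\mcu}(\varrho,\ul{\psi}_b^t),$$
where on the first factor I used that $t$ is central in $M$, so $\chi^t\rest_{U_M}=\chi\rest_{U_M}$. The first factor equals $\mbc$ by Lemma~\ref{lemmaVlambdasupport}.(1) (alternatively one transfers the isomorphism $\mrhom_{\Jmax\cap U^t}(\kappamax,\chi^t)\cong\mbc$ from the corollary following Lemma~\ref{lemmathetachidist}, which applies since $t\in T_B\subset UB^\times\Jmax$, along $\mrind_{J'}^{U(\mfb)U^1(\mfa)}(\kappamax\rest_{J'})\cong\mrind_{J_P}^{U(\mfb)U^1(\mfa)}(\kappa_P)$). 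For the second factor, $t\in T(\mfb)_{\leq}$ forces the composition of $m$ it determines to consist of multiples of $m_0$, so $\ul{\psi}_b^t$ agrees with $\ul{\psi}_b$ on $\mcm\cap\mcu$, which is a generic character of this unipotent radical; hence $\mrhom_{\mcm\cap\mcu}(\varrho,\ul{\psi}_b^t)\cong\mbc$ by uniqueness of the Gelfand--Graev model of $\varrho$ over $\bs{l}$ (cf.\ the argument of Lemma~\ref{lemmacondtx-1nonzero} with $x=1$). Thus $\mrhom_{\ol{J_P}\cap U^t}(\lambda_P,\psi^t)\cong\mbc$, as wanted.

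I expect the main obstacle to be organizational rather than conceptual: carefully establishing the Iwahori decomposition of $\ol{J_P}\cap U^t$ and checking that conjugation by $t$ behaves as claimed with respect to $M$, $N$, $N^-$, $J_M$ and the characters $\chi$, $\psi_b$, so that the computation reduces cleanly to the cuspidal case of Section~\ref{sectioncuspidal} together with the finite-field multiplicity one. No representation-theoretic input beyond Sections~\ref{sectioncuspidal}--\ref{sectionH0module} (and the Whittaker-datum construction of Theorem~\ref{thmsimpletypewhittaker}) should be required.
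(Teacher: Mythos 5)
Your overall route is the same as the paper's: isolate the Mackey summand at $g=t$, split $\lambda_P=(1_{\mu_n}\cdot\kappa_P)\otimes(\epsilon\cdot\rho_P)$ and $\psi=\chi\psi_b$, handle the $\kappa$-factor via the cuspidal computation of Lemma \ref{lemmaVlambdasupport}.(1) together with $\chi\rest_{N^-}=1$ and $\kappa_P$ being trivial on the $N^-$-part of $J_P$, and handle the $\rho$-factor via triviality of $\psi_b^t$ on the pro-$p$ part (Lemma \ref{lemmapsibnonzterocond}) and finite-field Gelfand--Graev uniqueness (Lemma \ref{lemmacondtx-1nonzero} with $x=1$). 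The deduction of the existence and uniqueness of $\Phi_t$ from the one-dimensionality of that summand is also as in the paper.

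There is, however, one step that fails as written. You claim that $t$ lies in the centre of $M$, that conjugation by $t$ fixes $M$ pointwise, hence that $\ol{J_P}\cap U^t=(H^1\cap N^-)\rtimes(J_M\cap U_M)$ ``independently of $t$'' and that $\chi^t\rest_{U_M}=\chi\rest_{U_M}$. The element $t=\mrdiag(\varpi_E^{s_1}I_{m_0},\dots,\varpi_E^{s_k}I_{m_0})$ is central in $B^\times\cap M\cong\prod_i\mrgl_{m_0}(E)$, but \emph{not} in $M\cong\prod_i\mraut_F(V^i)$ unless $E=F$: multiplication by $\varpi_E^{s_i}$ is not $F$-scalar on $V^i$, and it is not diagonal in the $F$-basis of Theorem \ref{thmsimpletypewhittaker}.(5), so $t$ does not normalize $U_M=U\cap M$ in general and $\chi^t$ need not even be defined on $U_M$. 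The correct $M$-part of $\ol{J_P}\cap U^t$ is $J_M\cap U_M^t$, which genuinely depends on $t$. The paper circumvents this by using only that $t$ normalizes $M$, $N$, $N^-$, $J_M$ and the isomorphism class of $\kappa_M$ (since $T(\mfb)\subset\bs{J}_M$): after the Iwahori decomposition one conjugates by $t$ to get $\mrhom_{J_M\cap U_M^t}(\kappa_M,\chi^t)\cong\mrhom_{J_M\cap U_M}(\kappa_M,\chi)\cong\mbc$, and argues analogously for the $\rho$-factor after checking $\psi_b^t$ is trivial on $J_P^1\cap U^t=(J_M^1\cap U)^t(J_P^1\cap N^{-t})$. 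With ``central'' replaced by ``normalizes, then conjugate'' your argument becomes the paper's proof; as stated it is only valid in the depth-zero situation $E=F$, which is exactly the case already covered by the pro-$p$ Iwahori methods the proposition is meant to go beyond.
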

	
	\begin{proof}
		
		Notice that $t$ normalizes $M,N,N^-,J_M$ and $\kappa_M$.
		First, we show that
		$$\mrhom_{J_P\cap U^t}(\kappa_P,\chi^t)\cong\mbc.$$
		Using the Iwahori decomposition, we only need to show that
		$$\mrhom_{J_{M}\cap U_{M}^t}(\kappa_{M},\chi^t)\cong\mrhom_{J_{M}\cap U_{M}}(\kappa_{M},\chi)\cong\mbc\quad\text{and}\quad \mrhom_{J^1\cap N^{-t}}(\kappa_P,\chi^t)=\mrhom_{J^1\cap N^-}(\kappa_P,1)\neq 0,$$
		where the first statement follows from Lemma \ref{lemmaVlambdasupport}.(1), and the second statement follows from the fact that $\kappa_P\rest_{J^1\cap N^-}=1$ and $\chi\rest_{ N^-}=1$. 
		
		Then, we show that
		$$\mrhom_{\ol{J_P}\cap U^t}(\rho,\psi_b^t)\cong\mbc.$$
		From the expression of $t$ and a similar argument of Lemma \ref{lemmapsibnonzterocond}, it is easily seen that $\psi_b^t$ is trivial on $(J_M^1\cap U)^t\subset (U^1(\mfa)\cap M\cap U)^t$ and $J_P^1\cap N^{-t}\subset U^1(\mfa)\cap N^{-t}\subset U^1(\mfa)\cap N^{-}$, and thus it is trivial on $J_P^1\cap U^t=(J_M^1\cap U)^t(J_P^1\cap N^{-t})$. So we only need to show that
		$$\mrhom_{\ol{U(\mfb)}\cap U_B^t}(\rho,\psi_b^t)\cong\mrhom_{\mcp\cap \mcu^t}(\varrho,\ol{\psi}_b^t)\cong\mbc,$$
		which follows from Lemma \ref{lemmacondtx-1nonzero}.
		
		Finally, we have
		$$\mrhom_{\ol{J_P}\cap U^t}(\lambda_P,\psi^t)\cong\mrhom_{J_P\cap U^t}(\kappa_P,\chi^t)\otimes_\mbc\mrhom_{\ol{J_P}\cap U^t}(\rho,\psi_b^t)\cong\mbc.$$
		
	\end{proof}

	Recall that since $\mrind_{\ol{J_P}}^{\ol{U(\mfb)}\ol{U^1(\mfa)}}(\lambda_P)\cong \mrind_{\ol{J'}}^{\ol{U(\mfb)}\ol{U^1(\mfa)}}(\lambda')$, we have an induced isomorphism of vector spaces
	$$f_{\lambda_P,\lambda'}:\mcv^{\lambda_P}=\mrhom_{\ol{G}}(\mrind_{\ol{J_P}}^{\ol{G}}(\lambda_P),\mrind_{U}^{\ol{G}}(\psi))\rightarrow\mrhom_{\ol{G}}(\mrind_{\ol{J'}}^{\ol{G}}(\lambda'),\mrind_{U}^{\ol{G}}(\psi))=\mcv^{\lambda'},$$
	which is also equivariant with respect to the $\mch(\ol{G},\lambda_P)\cong\mch(\ol{G},\lambda')$-action. In particular, given $g\in G$ and $\Phi\in \mcv^{\lambda_P}$ such that $$\mrsupp(\Phi\rest_{\lambda_P})=Ug\ol{J_P},$$ we necessarily have (\emph{cf.} \eqref{eqisolambdalanbda'}) $$\mrsupp(f_{\lambda_P,\lambda'}(\Phi)\rest_{\lambda'})\subset Ug\ol{U(\mfb)}\ol{U^1(\mfa)}.$$

	Similarly, since $r_N(\mcv)\cong\mcv_M$ as representations of $M$ (\emph{cf.} Proposition \ref{propGGmoduleJacquet}), the Jacquet module functor $r_N$ induces an isomorphism of vector spaces
	$$r_N:\mcv^{\lambda_P}=\mrhom_{\ol{G}}(\mrind_{\ol{J_P}}^{\ol{G}}(\lambda_P),\mrind_{U}^{\ol{G}}(\psi))\rightarrow\mrhom_{\ol{M}}(\mrind_{\ol{J_M}}^{\ol{M}}(\lambda_M),\mrind_{U_M}^{\ol{M}}(\psi_M))=\mcv_M^{\lambda_M}$$
	which is also equivariant with respect to the $t_P(\mch(\ol{M},\lambda_M))\cong\mch(\ol{M},\lambda_M)$-action. 
	
	We have the following important proposition.
	
	\begin{proposition}\label{propPhitiPhi'tivarphiti}
		
		Given $t\in T(\mfb)_{\leq}$, we define $\Phi_t'$, $\Phi_t$ and $\varphi_t$ as in Corollary \ref{corPhi't}, Proposition \ref{propPhit} and Lemma \ref{lemmaHMVMsupportadd}, then $f_{\lambda_P,\lambda'}(\Phi_t)$ is proportional to $\Phi_t'$ and $r_N(\Phi_t)$ is proportional to $\varphi_t$.	
		
	\end{proposition}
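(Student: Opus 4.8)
The proposition splits into two assertions, $f_{\lambda_P,\lambda'}(\Phi_t)\propto\Phi_t'$ and $r_N(\Phi_t)\propto\varphi_t$, and both are proved by tracking supports of functions as unions of $U$--$\ol{J_\bullet}$ double cosets, in the spirit of \eqref{eqsuppphi}. My plan is to settle the Jacquet assertion first, since it is essentially a direct support computation, and then to feed it into the first assertion. Throughout I use that $\Phi_t$, $\Phi_t'$ and $\varphi_t$ are, up to scalar, the unique non-zero elements of the relevant $\mch$-modules whose support is contained in a single prescribed double coset (Proposition~\ref{propPhit}, Corollary~\ref{corPhi't}.(1), Lemma~\ref{lemmaVlambdasupport}.(4)).

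\textbf{The Jacquet assertion.} By Proposition~\ref{propPhit}, $\Phi_t\rest_{\lambda_P}$ is supported on the single $U$--$\ol{J_P}$ double coset of $t$. Since $t\in T(\mfb)\subset M$ normalises $M$, $N$, $N^-$ and $J_M$, and since $N^-\subset N_0^-\subset U$, the Iwahori decomposition $J_P=(H^1\cap N^-)\,J_M\,(J\cap N)$ lets me rewrite this double coset as $U\,tJ_M\,N^0$ with $N^0:=J\cap N$ an open compact subgroup of $N$ (here $Ut(H^1\cap N^-)=Ut$ because $t(H^1\cap N^-)t^{-1}\subset N^-\subset U$). Moreover $\lambda_P$ is trivial on $N^0$: on the $\rho_P$-factor because inflation kills $J_P^1\supset J\cap N$, and on the $\kappa_P$-factor because $J^1\cap N$ acts trivially on the $J\cap N^-$-fixed subspace of $\eta$ defining $\kappa_P$. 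Hence for $v\in\lambda_P$ the function $\Phi_t(v)$ is right $N^0$-invariant and supported in $U\ol{M}N^0$, so Proposition~\ref{propGGmoduleJacquet}.(2) applies and gives $r_N(\Phi_t)(v)(m)=\mu(N^0)\delta_N^{1/2}(m)\Phi_t(v)(m)$ for $m\in\ol{M}$. In particular $r_N(\Phi_t)\neq0$, and $\mrsupp(r_N(\Phi_t)\rest_{\lambda_M})\subset\ol{M}\cap U\,tJ_M\,N^0$; writing $U=N^-U_M$ and using $M\cap N^-N=\{1\}$ (the open Bruhat cell), this intersection is exactly $U_M\,t\,J_M$. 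Thus $r_N(\Phi_t)$ is a non-zero element of $\mcv_M^{\lambda_M}$ supported on the double coset of $t$, whence $r_N(\Phi_t)\propto\varphi_t$ by Lemma~\ref{lemmaVlambdasupport}.(4).

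\textbf{The $f_{\lambda_P,\lambda'}$ assertion.} Since $f_{\lambda_P,\lambda'}$ is an isomorphism of vector spaces (induced from \eqref{eqisolambdalanbda'} by further induction to $\ol{G}$) and $\Phi_t\neq0$, the element $\Phi:=f_{\lambda_P,\lambda'}(\Phi_t)$ is non-zero; and by the support-transfer statement preceding the proposition, $\mrsupp(\Phi\rest_{\lambda'})$ is contained in the single $U$--$\ol{U(\mfb)U^1(\mfa)}$ double coset of $t$. Decompose $\Phi$ in the $\mbc$-basis of $\mcv^{\lambda'}$ provided by the Proposition of §\ref{sectionH0module} describing $\mcv_B^{\lambda'}\cong\bigoplus_{t''\in T(\mfb)_{\leq}}\mcv_{B,t''}^{\lambda'}$: its basis elements $\Psi_{t''x^{-1}}$, indexed by $t''\in T(\mfb)_{\leq}$ and $x\in R_{t''}$, have pairwise disjoint supports (distinct $U$--$\ol{J'}$ double cosets), so $\mrsupp(\Phi\rest_{\lambda'})$ is the union of the supports of the terms occurring with non-zero coefficient. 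It therefore suffices to prove the combinatorial claim: the only index $(t'',x)$ whose double coset $U\,t''x^{-1}\,\ol{J'}$ is contained in the double coset $U\,t\,\ol{U(\mfb)U^1(\mfa)}$ is $(t,x_0)$ with $x_0$ the identity representative; granting this, $\Phi$ is a scalar multiple of the basis element supported on the double coset of $t$, which is $\Phi_t'$ by Corollary~\ref{corPhi't}.(1).

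\textbf{The crux.} The combinatorial claim is where the real work sits, and I expect it to be the main obstacle. First, restricting the (non-zero) hom-space over $\ol{J'}$ to the subgroup $\Honemax$ and invoking Lemma~\ref{lemmathetachidist} together with its converse Proposition~\ref{propthetachieqconverse} forces any surviving $t''x^{-1}$ into $UB^\times\Jmax$; refining this with Lemmas~\ref{lemmapsibnonzterocond} and \ref{lemmacondtx-1nonzero} and the Iwasawa decomposition $B^\times=U_BT_BU(\bmax)$ reduces the problem to comparing the $T_B$-component and the $U(\bmax)$-component of $t''x^{-1}$ with those of $t$. The $T_B$-component must be $t$: applying the Jacquet functor and the first assertion above, the double coset of $t''$ occurring in $\mcv_M^{\lambda_M}$ must meet that of $t$, and these are disjoint for distinct $t''\in T(\mfb)_{\leq}$ (alternatively, one tracks the $\mca$-action, which is $f_{\lambda_P,\lambda'}$-equivariant). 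The $U(\bmax)$-component is then pinned down because the containment $U\,tx^{-1}\,\ol{J'}\subset U\,t\,\ol{U(\mfb)U^1(\mfa)}$ forces the image of $x$ in $\mcg=U(\bmax)/U^1(\bmax)$ into $\mcp$, so the minimal-length representative of its $\mcp$--$\mcp_t$ double coset is the identity and $U\,tx^{-1}\,\ol{J'}=U\,t\,\ol{J'}$. Making this last step fully rigorous is delicate: it requires a careful analysis of how the compact-open subgroups $U(\mfb)$, $U^1(\mfa)$ and $\Jonemax$ sit inside one another, combined with the finite-group double-coset combinatorics of §\ref{sectionsimpletype}, and this is exactly where the extra structure available for Kazhdan--Patterson and Savin covers is used.
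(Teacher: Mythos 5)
Your second assertion (the Jacquet half) is argued essentially as in the paper: $\mrsupp(\Phi_t\rest_{\lambda_P})=Ut\ol{J_P}$, the Iwahori decomposition and $t(H^1\cap N^-)t^{-1}\subset N^-\subset U$ put this inside $U\ol{M}(J_P\cap N)$, Proposition \ref{propGGmoduleJacquet} then gives $\mrsupp(r_N(\Phi_t)\rest_{\lambda_M})=U_Mt\ol{J_M}$, and Lemma \ref{lemmaVlambdasupport}.(4) concludes. (One small caveat: the triviality of $\lambda_P$ on $J_P\cap N$ is best quoted as the covering-pair property underlying Theorem \ref{thmheckealg}.(5); your literal justification via ``$J^1\cap N$ acts trivially on the $J\cap N^-$-fixed vectors of $\eta$'' is not how $\kappa_P$ behaves in the Heisenberg picture, though the fact itself is standard.)

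The genuine gap is in the first assertion: the step you yourself flag as ``delicate'' and leave unverified is the entire content of the proof, and your proposed substitutes do not close it. Once Proposition \ref{propthetachieqconverse} forces any coset $Ug\ol{J'}$ in $\mrsupp(f_{\lambda_P,\lambda'}(\Phi_t)\rest_{\lambda'})$ to satisfy $g\in UB^\times J'$, the paper finishes with a one-line double-coset computation: $g\in UB^\times J'\cap UtU(\mfb)U^1(\mfa)=U(B^\times\cap tU(\mfb)U^1(\mfa))J'=UtJ'$, using $B^\times\cap U(\mfb)U^1(\mfa)=U(\mfb)$ (i.e.\ $B\cap\mfp_{\mfa}=\mfp_{\mfb}$, so $B^\times\cap U^1(\mfa)=U^1(\mfb)$) together with $U(\mfb)\subset J'$; hence the support is exactly $Ut\ol{J'}$ and Corollary \ref{corPhi't}.(1) applies. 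No Kazhdan--Patterson/Savin-specific structure enters here --- the proposition is a general-cover statement, and the cover-specific input is only used later, in fulfilling strategy \eqref{eqstrategy} --- so your closing remark misidentifies where the difficulty lies. Moreover, your two auxiliary devices are unsupported as stated: pinning the ``$T_B$-component'' by ``applying the Jacquet functor'' transfers support information from $r_N(\Phi_t)$ (a $U_M$--$\ol{J_M}$ support in $\mcv_M^{\lambda_M}$) to $f_{\lambda_P,\lambda'}(\Phi_t)$ (a $U$--$\ol{J'}$ support in $\mcv^{\lambda'}$) without any stated compatibility between these two maps, and the assertion that the containment of cosets ``forces the image of $x$ into $\mcp$'' is precisely the unproven point. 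As written, the proof of $f_{\lambda_P,\lambda'}(\Phi_t)\propto\Phi_t'$ is therefore incomplete; it can be repaired by replacing your basis-matching argument with the intersection computation displayed above.
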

	
	\begin{proof}
		
		Assume that $g\in G$ such that $$Ug\ol{J'}\subset\mrsupp(f_{\lambda_P,\lambda'}(\Phi)\rest_{\lambda'})\subset Ut\ol{U(\mfb)}\ol{U^1(\mfa)}\subset Ut\ol{U^1(\mfa)}\ol{\Jmax},$$ then in particular $\mrhom_{\ol{J'}\cap U^g}(\lambda',\psi^g)\neq 0$, implying that $\chi^g\rest_{\Honemax\cap U^g}=\theta_{\max}\rest_{\Honemax\cap U^g}$. Using Proposition \ref{propthetachieqconverse} we have $g\in U B^\times \Jmax=UB^\times J'$, thus $$g\in UB^\times J'\cap UtU(\mfb)U^1(\mfa)=U(B^\times\cap tU(\mfb)U^1(\mfa))J'=UtJ'.$$
		Thus we have shown that $\mrsupp(f_{\lambda_P,\lambda'}(\Phi_t)\rest_{\lambda'})=Ut\ol{J'}$, which finishes the proof of the first claim. 
		
		Notice that $\mrsupp(\Phi_t\rest_{\lambda_P})=Ut\ol{J_P}=Ut(\ol{J_P}\cap \ol{P})\subset U\ol{M}(J_P\cap N)$, using Proposition \ref{propGGmoduleJacquet}, we have $\mrsupp(r_N(\Phi_t)\rest_{\lambda_M})=U_Mt\ol{J_M}$, which finishes the proof the second claim.
		
	\end{proof}

	For integers $a\leq b$, we define 
	$$T(\mfb)_{[a,b]}:=\{\mrdiag(\varpi_E^{s_1}I_{m_0},\varpi_E^{s_2}I_{m_0}\dots, \varpi_E^{s_k}I_{m_0})\mid s_1,s_2,\dots,s_k\in \{a,a+1,\dots,b\}\}$$
	and $T(\mfb)_{\leq,[a,b]}=T(\mfb)_{[a,b]}\cap T(\mfb)_{\leq}$.
	
	\begin{lemma}\label{lemmaTwphit}
		
		\begin{enumerate}
			
			\item For $w\in\mfS_k$ and $t\in T(\mfb)_{[a,b]}$, we have $$T_w\ast t_P(\phi_{t^{-1}})=\sum_{w'\in\mfS_k,l(w')\leq l(w)}\sum_{t'\in T(\mfb)_{[a,b]}}c_{w,w';t,t'}t_P(\phi_{t'^{-1}})\ast T_{w'},$$
			where $c_{w,w';t,t'}$ are non-negative integers.
			
			\item Moreover, $c_{w,w';t,t'}\neq 0$ implies that $\ord{t'}=\ord{t}$, where we define $\ord{t_0}=s_1+s_2+\dots+s_k$ for any $t_0=\mrdiag(\varpi_E^{s_1}I_{m_0},\varpi_E^{s_2}I_{m_0}\dots, \varpi_E^{s_k}I_{m_0})\in T(\mfb)$ .
			
		\end{enumerate}

	\end{lemma}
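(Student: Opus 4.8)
The plan is to prove this as a \emph{straightening} identity inside the affine Hecke algebra $\mch\cong\mca\otimes_{\mbc}\mch_0$ of Assumption \ref{assumHeckealg} (available for Kazhdan--Patterson and Savin covers): here $t_P(\phi_{t^{-1}})$ lies in the commutative lattice part and $T_w$ in the finite part $\mch_0=\mch(\mfS_k,\bs{q}_0)$, and one wants to rewrite $T_w\ast t_P(\phi_{t^{-1}})$ in the ``lattice $\times$ finite'' order. (For $t\in T(\mfb)\setminus T(\mfb,\varrho)$ one interprets $t_P(\phi_{t^{-1}})$ inside the natural extended affine Hecke algebra attached to $(T(\mfb),\mfS_k,\bs{q}_0)$, in which $\mch$ sits as a subalgebra; the Bernstein and quadratic relations used below are the same.) The argument is an induction on $l(w)$. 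The base case $l(w)=0$ is the trivial identity $T_e\ast t_P(\phi_{t^{-1}})=t_P(\phi_{t^{-1}})\ast T_e$. For the inductive step I write $w=s_\alpha w'$ with $\alpha\in\Delta$ simple and $l(w')=l(w)-1$, apply the inductive hypothesis to $T_{w'}\ast t_P(\phi_{t^{-1}})$, and then left-multiply by $T_{s_\alpha}$.

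Thus everything reduces to analysing $T_{s_\alpha}\ast\bigl(t_P(\phi_{t''^{-1}})\ast T_{w''}\bigr)$ for $t''\in T(\mfb)_{[a,b]}$ with $\ord{t''}=\ord{t}$ and $l(w'')\leq l(w)-1$. By the Bernstein relation \eqref{eqbernstein}, $T_{s_\alpha}\ast t_P(\phi_{t''^{-1}})$ is the sum of $t_P(\phi_{s_\alpha\cdot t''^{-1}})\ast T_{s_\alpha}$ and the divided difference $(\bs{q}_0-1)(\phi_0-\phi_{-\alpha^\vee})^{-1}\bigl(t_P(\phi_{t''^{-1}})-t_P(\phi_{s_\alpha\cdot t''^{-1}})\bigr)$; an elementary computation in the underlying Laurent polynomial algebra shows this divided difference to be an integral combination of the $t_P(\phi_{u^{-1}})$ with $u^{-1}$ running over the lattice points lying coordinatewise between $t''^{-1}$ and $s_\alpha\cdot t''^{-1}$. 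Since $s_\alpha$ only transposes a pair of adjacent coordinates, each such $u$ has $\ord{u}=\ord{t''}=\ord{t}$ and still has all coordinates in $\{a,\dots,b\}$, and $s_\alpha\cdot t''$ itself clearly lies in $T(\mfb)_{[a,b]}$; these observations give exactly the assertions $t'\in T(\mfb)_{[a,b]}$ and $\ord{t'}=\ord{t}$ of parts (1)--(2). Reinserting $T_{s_\alpha}\ast T_{w''}$ and rewriting it by the quadratic relation $(T_{s_\alpha}+1)(T_{s_\alpha}-\bs{q}_0)=0$ as $T_{s_\alpha w''}$ (if $l(s_\alpha w'')>l(w'')$) or $(\bs{q}_0-1)T_{w''}+\bs{q}_0 T_{s_\alpha w''}$ (if $l(s_\alpha w'')<l(w'')$) keeps all finite indices of length $\leq l(w)$; collecting terms yields an identity of the claimed shape.

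The genuinely delicate point is the \emph{non-negativity} of the integers $c_{w,w';t,t'}$: the divided-difference term above carries sign $+$ precisely when the transposed coordinates of $t''^{-1}$ are in weakly decreasing order, and sign $-$ otherwise, so the naive induction does not by itself keep the coefficients non-negative. My plan is to deduce non-negativity from the positivity of structure constants in the standard (characteristic-function) basis of the ambient extended affine Hecke algebra: $t_P(\phi_{t^{-1}})$ is, up to the embedding $t_P$, such a basis element supported on a single double coset, so $T_w\ast t_P(\phi_{t^{-1}})$ is a product of two standard basis elements and therefore expands in the standard basis with coefficients in $\mbz_{\geq 0}[\bs{q}_0]$ (this follows from the quadratic and braid relations by induction on length), after which one checks that re-collecting those standard basis elements into the products $t_P(\phi_{t'^{-1}})\ast T_{w'}$ does not destroy non-negativity --- using the $\mfS_k$-action to reduce to the case where $t'^{-1}$ is ``$P$-positive'', so that $t_P(\phi_{t'^{-1}})\ast T_{w'}$ is again a standard basis element up to a non-negative combination of strictly shorter ones. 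Specialising $\bs{q}_0$ to the prime power $q^{[\bs{l}:\bs{k}]m_0}$ then gives non-negative integers, while the constraints $\ord{t'}=\ord{t}$ and $t'\in T(\mfb)_{[a,b]}$ come from the Bernstein computation above. Reconciling the two descriptions --- the Bernstein one, which makes the order- and box-constraints transparent but produces signs, and the geometric one, which makes positivity transparent but hides these constraints --- is the main obstacle; the rest is bookkeeping.
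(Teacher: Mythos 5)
Your main computation is essentially the paper's own proof: the paper disposes of this lemma in one line (induction on $l(w)$, the simple-reflection case being read off from the Bernstein relation \eqref{eqbernstein}), and your inductive step --- write $T_w=T_{s_\alpha}\ast T_{w'}$, apply \eqref{eqbernstein} to $T_{s_\alpha}\ast t_P(\phi_{t''^{-1}})$, expand the divided difference as an integral combination of $\phi_u$ with $u$ on the $\alpha^\vee$-string between $t''^{-1}$ and $s_\alpha\cdot t''^{-1}$ (so the coordinates stay in the box and the coordinate sum, hence $\ord{\cdot}$, is unchanged since $\alpha^\vee$ has coordinate sum zero), then absorb $T_{s_\alpha}\ast T_{w''}$ by the quadratic relation keeping lengths $\leq l(w)$ --- is exactly that argument written out. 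This correctly yields integrality of the $c_{w,w';t,t'}$ together with the three constraints $t'\in T(\mfb)_{[a,b]}$, $\ord{t'}=\ord{t}$, $l(w')\leq l(w)$, which is all that the sequel uses (the next lemma only needs support containment, and the sign $(-1)^{l(w')}$ there comes separately from $T_{w'}\ast\Phi_1=(-1)^{l(w')}\Phi_1$). Your preliminary device of reading the identity in the extended affine Hecke algebra built on the full lattice $T(\mfb)$, so that $\phi_{t^{-1}}$ makes sense for $t\notin T(\mfb,\varrho)$, is a reasonable way to make the statement precise; the paper is silent on this point.

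Where you go beyond the paper is the non-negativity of the $c_{w,w';t,t'}$, and there your patch does not work. You are right that the straightening produces signs: with the paper's conventions, if $\langle u,\alpha\rangle=-d<0$ then $T_{s_\alpha}\ast\phi_{u}=\phi_{s_\alpha\cdot u}\ast T_{s_\alpha}-(\bs{q}_0-1)\sum_{j=1}^{d}\phi_{u+j\alpha^\vee}$, and nothing cancels these terms, so already for $l(w)=1$ negative coefficients genuinely occur as $t$ runs over the whole box; thus non-negativity is not available (the paper's one-line proof does not establish it either --- it is harmless precisely because it is never used). Your proposed rescue via positivity of structure constants in the standard basis fails at its first step: $t_P(\phi_{t^{-1}})$ is a single-double-coset (standard basis) element of $\mch(\ol{G},\lambda_P)$ only when $t^{-1}$ is $P$-positive (or $P$-negative, depending on normalization); for a general $t$ in the box it is a Bernstein-type element, a product of a standard element with the inverse of another, whose own standard-basis expansion carries signs, so positivity of the $T_xT_y$ structure constants gives nothing, and the subsequent ``re-collecting'' step is asserted rather than proved. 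The correct repair is simply to weaken ``non-negative integers'' to ``integers'' (or to restrict the positivity claim to dominant $t$), after which your first paragraph is a complete proof, identical in substance to the paper's.
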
  
	
	\begin{proof}
		
		When $w$ is a simple reflection, it follows from the explicit multiplicative law of affine Hecke algebra (\emph{cf.} \eqref{eqbernstein}). In general, we do induction on the length $l(w)$.
		
	\end{proof}
	
	\begin{lemma}
		
		For $t\in T(\mfb)_{\leq,[a,b]}$ and $w\in \mfS_k$, we have $\mrsupp(r_N(T_w\ast\Phi_t))\subset U_MT(\mfb)_{[a,b]}\ol{J_M}$.
		
	\end{lemma}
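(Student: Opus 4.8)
The plan is to establish the stronger statement that the $\lambda_P$-support of $T_w\ast\Phi_t$ is a union of double cosets $Ut'\ol{J_P}$ with $t'\in T(\mfb)_{\leq,[a,b]}$, and then to transport it along the Jacquet functor. Indeed, using the Iwahori decomposition $J_P=(H^1\cap N^-)(J\cap M)(J\cap N)$ together with the inclusions $H^1\cap N^-\subset N^-\subset N_0^-\subset U$, one has $Ut'\ol{J_P}=Ut'\,\ol{J_M}(J\cap N)\subset U\ol M N^0$ with $N^0:=J\cap N$ an open compact subgroup of $N$, and every function in the image of $\lambda_P$ under an element of $\mcv^{\lambda_P}$ is right $(J\cap N)$-invariant; hence Proposition~\ref{propGGmoduleJacquet}(2) gives $\mrsupp\big(r_N(T_w\ast\Phi_t)\rest_{\lambda_M}\big)\subset\bigcup_{t'}U_M t'\ol{J_M}$, which is what we want.

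For the $\lambda_P$-statement I would induct on $l(w)$, the case $w=e$ being the defining property of $\Phi_t$ (Proposition~\ref{propPhit}) together with $t\in T(\mfb)_{\leq,[a,b]}$. For the inductive step write $w=s_iw'$ with $l(w')=l(w)-1$; by the inductive hypothesis $T_{w'}\ast\Phi_t$ is supported on $\bigcup_{t'\in T(\mfb)_{\leq,[a,b]}}Ut'\ol{J_P}$, and since by Proposition~\ref{propPhit} each such double coset contributes a one-dimensional space to $\mcv^{\lambda_P}$, we may write $T_{w'}\ast\Phi_t=\sum_{t'}c_{t'}\Phi_{t'}$, so that $T_w\ast\Phi_t=\sum_{t'}c_{t'}\,T_{s_i}\ast\Phi_{t'}$. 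Thus everything reduces to the case of a single simple reflection $s_i$.

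For $T_{s_i}\ast\Phi_t$ with $t\in T(\mfb)_{\leq,[a,b]}$, the operator $T_{s_i}$ is supported on one double coset $\ol{J_P}\zeta_i\ol{J_P}$ with $\zeta_i\in W_0(\mfb)$ a block transposition, so $\mrsupp\big((T_{s_i}\ast\Phi_t)\rest_{\lambda_P}\big)\subset Ut\ol{J_P}\zeta_i^{-1}\ol{J_P}$; a Bruhat-cell analysis of $\ol{J_P}\zeta_i^{-1}\ol{J_P}$, commuting the unipotent pieces past $t$, writes this as a union of $Ut'\ol{J_P}$ whose torus parts $t'$ are, in the semidirect product decomposition $W(\mfb)=T(\mfb)\rtimes W_0(\mfb)$, either $t$, or the permutation $s_i\cdot t$ rearranged so as to lie in $T(\mfb)_\leq$, or a torus element whose exponents interpolate between consecutive exponents of $t$. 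To identify precisely which torus elements occur — and to control the cover-theoretic contributions — I would feed in Lemma~\ref{lemmaTwphit} through the $\mca$-equivariant isomorphism $r_N$, the isomorphism $f_{\lambda_P,\lambda'}$, Proposition~\ref{propPhitiPhi'tivarphiti}, and the $\mch_0$-module description of $\mcv^{\lambda'}$ from Section~\ref{sectionH0module} (Corollary~\ref{corPhi't}): the content of Lemma~\ref{lemmaTwphit} — that the interpolating $\phi$-terms of the Bernstein presentation only ever produce exponents lying between two already-present ones — is exactly what guarantees that all the torus parts $t'$ appearing have exponents in $[a,b]$. Since moreover the double cosets occurring must contribute to $\mcv^{\lambda_P}$, their torus parts automatically lie in $T(\mfb)_\leq$, hence in $T(\mfb)_{\leq,[a,b]}$.

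The main obstacle is this simple-reflection case: one must carry out the Bruhat-cell computation for $Ut\ol{J_P}\cdot\ol{J_P}\zeta_i^{-1}\ol{J_P}$ accurately enough to see that only the ``expected'' double cosets appear — those attached to permutations of $t$ reorganised into $T(\mfb)_\leq$ and to interpolations between consecutive exponents of $t$ — and never a torus element whose range of exponents exceeds $[a,b]$. This needs the detailed structure of the group $J_P$ and of the Kazhdan--Patterson and Savin covers, packaged in Lemma~\ref{lemmaTwphit}, Corollary~\ref{corPhi't} and Proposition~\ref{propPhitiPhi'tivarphiti}.
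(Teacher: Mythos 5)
Your proposed reduction rests on a claim that is actually false, namely the ``stronger statement'' that $\mrsupp\big((T_w\ast\Phi_t)\rest_{\lambda_P}\big)$ is a union of torus double cosets $Ut'\ol{J_P}$ with $t'\in T(\mfb)_{\leq,[a,b]}$. Corollary \ref{corPhi't}.(2), transported through $f_{\lambda_P,\lambda'}$ and Proposition \ref{propPhitiPhi'tivarphiti}, shows that for $t$ with small stabilizer (e.g.\ regular $t$) and $w\neq e$, the element $T_w\ast\Phi_t$ is supported on a coset of the form $Utx^{-1}\cdot(\text{compact part})$ with $x\in R_t$ a nontrivial Weyl representative; by the Iwasawa--Bruhat decomposition such a $U$--$\ol{J_P}$ coset is disjoint from every coset $Ut'\ol{J_P}$ with $t'$ in the torus. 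Consequently your inductive step ``$T_{w'}\ast\Phi_t=\sum_{t'}c_{t'}\Phi_{t'}$'' fails already for one simple reflection acting on $\Phi_t$ with $t$ regular: $T_{s_i}\ast\Phi_t$ is not a linear combination of the $\Phi_{t'}$ at all. Note also that $\Phi_{t'}$ only exists for dominant $t'$ (Proposition \ref{propPhit} is stated for $T(\mfb)_{\leq}$, and the relevant hom spaces vanish off $T(\mfb)_{\leq}$ by Lemma \ref{lemmapsibnonzterocond}), whereas the Bernstein relations inevitably produce non-dominant torus elements; this mismatch is a further sign that bookkeeping supports at the level of $\ol{G}$ is the wrong invariant for this lemma, and your concluding Bruhat-cell analysis of $Ut\ol{J_P}\zeta_i^{-1}\ol{J_P}$ cannot be completed so as to land only in torus cosets.

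The point at which you say you would ``feed in Lemma \ref{lemmaTwphit} through the $\mca$-equivariant isomorphism $r_N$'' is where all the content lies, and carried out honestly it makes your reduction unnecessary: it is precisely the paper's proof. One writes $\Phi_t=t_P(\phi_{t^{-1}})\ast\Phi_1$ (legitimate because $r_N$ is an equivariant isomorphism onto $\mcv_M^{\lambda_M}$ and $\phi_{t^{-1}}\ast\varphi_1=\varphi_t$ by Lemma \ref{lemmaHMVMsupportadd}), commutes $T_w$ past the lattice part using Lemma \ref{lemmaTwphit}, uses $T_{w'}\ast\Phi_1=(-1)^{l(w')}\Phi_1$ from Corollary \ref{corPhi't}.(2), and then reads off
$r_N(T_w\ast\Phi_t)=\sum_{w',t'}(-1)^{l(w')}c_{w,w';t,t'}\,\phi_{t'^{-1}}\ast\varphi_1$,
whose support is controlled coset-by-coset by Lemma \ref{lemmaHMVMsupportadd}. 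In particular only the support \emph{after} applying $r_N$, inside $\ol{M}$, is ever controlled; no statement about the $\lambda_P$-support of $T_w\ast\Phi_t$ in $\ol{G}$ is needed, and the conclusion only requires $t'\in T(\mfb)_{[a,b]}$, with no dominance condition. Your first paragraph (transporting a $G$-level support statement through Proposition \ref{propGGmoduleJacquet}.(2)) is a sound mechanism in itself, but it is attached to a premise that does not hold, so the proposal as written has a genuine gap.
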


	\begin{proof}
		
		Choose $\varphi_1,\varphi_t\in\mcv_M^{\lambda_M}$ supported on $U_M\ol{J_M}$ and $U_Mt\ol{J_M}$ respectively, and $\phi_{t^{-1}}\in \mch(\ol{M},\lambda_M)\cong\mca$ supported on $\ol{J_M}t^{-1}$, such that $\phi_{t^{-1}}\ast \varphi_1=\varphi_t$. Up to a scalar, we may also choose $\Phi_1,\Phi_t\in\mcv^{\lambda_P}$ supported on $U\ol{J_P}$ and $Ut\ol{J_P}$ respectively such that $r_N(\Phi_1)=\varphi_1$ and $r_N(\Phi_t)=\varphi_t$. 
		
		Using Lemma \ref{lemmaTwphit}, we have 
		\begin{equation*}
			\begin{aligned}
				r_N(T_w\ast \Phi_t)=r_N(T_w\ast t_P(\phi_{t^{-1}})\ast \Phi_1)&=\sum_{w'\in\mfS_k,l(w')\leq l(w)}\sum_{t'\in T(\mfb)_{[a,b]}}c_{w,w';t,t'}r_N(t_P(\phi_{t'^{-1}})\ast T_{w'}\ast\Phi_1)\\
				&=\sum_{w'\in\mfS_k,l(w')\leq l(w)}\sum_{t'\in T(\mfb)_{[a,b]}}(-1)^{l(w')}c_{w,w';t,t'}r_N(t_P(\phi_{t'^{-1}})\ast\Phi_1)\\
				&=\sum_{w'\in\mfS_k,l(w')\leq l(w)}\sum_{t'\in T(\mfb)_{[a,b]}}(-1)^{l(w')}c_{w,w';t,t'}\phi_{t'^{-1}}\ast\varphi_1
			\end{aligned}
		\end{equation*}
		where from the first row to the second row, by Corollary \ref{corPhi't}.(2), we have $T_{w'}\ast\Phi_1=(-1)^{l(w')}\Phi_1$. Using Lemma \ref{lemmaHMVMsupportadd}, $\phi_{t'^{-1}}\ast\varphi_1$ is a function in $\mcv_M^{\lambda_M}$ supported on $U_Mt'\ol{J_M}$. So the proof is finished.

	\end{proof}
	
	\begin{lemma}\label{lemmaV0n0-1basis}
		
		For $t_1',\dots,t_{u'}'$ ranging over $T(\mfb)_{\leq,[0,n_0-1]}$ and $\Phi_{t_1'},\dots,\Phi_{t_{u'}'}\in \mcv^{\lambda_P}$
		the related functions, let $\mcb(\mch_0\ast\Phi_{t_i'})$ be a $\mbc$-basis of $\mch_0\ast\Phi_{t_i'}$, then the disjoint union 
		$$\bigcup_{i=1}^{u'}r_N(\mcb(\mch_0\ast\Phi_{t_i'}))$$
		forms a $\mbc$-basis of the subspace $\mcv_{M,[0,n_0-1]}^{\lambda_M}$ of functions in $\mcv_M^{\lambda_M}$ supported on $U_MT(\mfb)_{[0,n_0-1]}\ol{J_M}$.
		
	\end{lemma}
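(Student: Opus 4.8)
I would prove the two sides have the same finite dimension, namely $n_0^k$, and that the proposed union is a linearly independent subset of the target; being of the right size it is then a basis, and disjointness of the union is automatic. All the substantive work has been done in the preceding lemmas; the proof of \ref{lemmaV0n0-1basis} is an assembly.

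\textbf{Step 1: the target space $\mcv_{M,[0,n_0-1]}^{\lambda_M}$.} First I would record that, as a $\mbc$-vector space, $\mcv_M^{\lambda_M}$ has a basis $\{\varphi_s\mid s\in T(\mfb)\}$ with $\mrsupp(\varphi_s\rest_{\lambda_M})=U_Ms\ol{J_M}$, and that the double cosets $U_Ms\ol{J_M}$ for distinct $s\in T(\mfb)$ are pairwise distinct. This follows from the proof of Proposition \ref{propVMlambdaMfree}: $\mcv_M^{\lambda_M}\cong\bigoplus_i\mch(\ol{M},\lambda_M)\ast\varphi_{g_i}$ is free over $\mch(\ol{M},\lambda_M)\cong\mca=\mbc[T(\mfb,\varrho)]$ with $g_i$ running over representatives of $\bs{J}_M/N_M(\lambda_M)$ chosen in $T(\mfb)$, and combining the basis $\{\phi_t\mid t\in T(\mfb,\varrho)\}$ of $\mca$ from Theorem \ref{thmheckealg}.(2) with Lemma \ref{lemmaHMVMsupportadd} yields a basis $\{\phi_t\ast\varphi_{g_i}\}$ whose members are supported on the single double cosets $U_M(g_it^{-1})\ol{J_M}$, with $g_it^{-1}$ ranging bijectively over $T(\mfb)$; two distinct basis vectors supported on the same double coset would be proportional by Lemma \ref{lemmaVlambdasupport}.(4), which is impossible. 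Hence the subspace of functions supported on $U_MT(\mfb)_{[0,n_0-1]}\ol{J_M}$ is the span of $\{\varphi_s\mid s\in T(\mfb)_{[0,n_0-1]}\}$ and has dimension $\car{T(\mfb)_{[0,n_0-1]}}=n_0^k$.

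\textbf{Step 2: the summands $r_N(\mch_0\ast\Phi_{t'})$.} For $t'\in T(\mfb)_{\leq,[0,n_0-1]}\subset T(\mfb)_{\leq}$, Proposition \ref{propPhit} produces $\Phi_{t'}\in\mcv^{\lambda_P}$, and by Proposition \ref{propPhitiPhi'tivarphiti} the image $f_{\lambda_P,\lambda'}(\Phi_{t'})$ is proportional to the element $\Phi'_{t'}\in\mcv^{\lambda'}$ of Corollary \ref{corPhi't}. Since $f_{\lambda_P,\lambda'}$ is an $\mch$-equivariant, hence $\mch_0$-equivariant, isomorphism, Corollary \ref{corPhi't}.(2) gives $\mch_0\ast\Phi_{t'}\cong\mch_0\otimes_{\mch_{\mco_{t'}}}\varepsilon_{\mco_{t'}}$, so that $\dim_\mbc(\mch_0\ast\Phi_{t'})=[\mfS_k:W_{\mco_{t'}}]=\car{\mfS_k\cdot t'}$, where $W_{\mco_{t'}}$ is the stabilizer of $t'$ in $\mfS_k$ and the last equality is orbit--stabilizer. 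Writing a general element of $\mch_0\ast\Phi_{t'}$ as $\sum_{w\in\mfS_k}c_wT_w\ast\Phi_{t'}$ and applying the lemma immediately preceding \ref{lemmaV0n0-1basis} (with $[a,b]=[0,n_0-1]$), each $r_N(T_w\ast\Phi_{t'})$ is supported on $U_MT(\mfb)_{[0,n_0-1]}\ol{J_M}$; therefore $r_N(\mch_0\ast\Phi_{t'})\subset\mcv_{M,[0,n_0-1]}^{\lambda_M}$. Because $r_N\colon\mcv^{\lambda_P}\to\mcv_M^{\lambda_M}$ is an isomorphism (Proposition \ref{propGGmoduleJacquet} and the discussion following it), $r_N(\mcb(\mch_0\ast\Phi_{t'}))$ is a basis of $r_N(\mch_0\ast\Phi_{t'})$.

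\textbf{Step 3: independence and conclusion.} By Corollary \ref{corPhi't}.(3) the subspaces $\mch_0\ast\Phi'_{t'_i}$ of $\mcv^{\lambda'}$ are linearly independent; transporting along the isomorphism $f_{\lambda_P,\lambda'}$ gives the same for $\mch_0\ast\Phi_{t'_i}\subset\mcv^{\lambda_P}$, and applying the isomorphism $r_N$ gives it for $r_N(\mch_0\ast\Phi_{t'_i})\subset\mcv_M^{\lambda_M}$. Hence $\bigcup_{i=1}^{u'}r_N(\mcb(\mch_0\ast\Phi_{t'_i}))$ is a linearly independent subset of $\mcv_{M,[0,n_0-1]}^{\lambda_M}$ with $\sum_{i=1}^{u'}\car{\mfS_k\cdot t'_i}$ elements; as the dominant elements $t'_i$ form a complete set of $\mfS_k$-orbit representatives in $T(\mfb)_{[0,n_0-1]}$, this number is $\car{T(\mfb)_{[0,n_0-1]}}=n_0^k=\dim_\mbc\mcv_{M,[0,n_0-1]}^{\lambda_M}$, so the union is a $\mbc$-basis. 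The union is disjoint since the summands $r_N(\mch_0\ast\Phi_{t'_i})$ are in direct sum and contain no zero vector. The only points requiring genuine care are that linear independence truly survives both transports (through $f_{\lambda_P,\lambda'}$ and through $r_N$) and that the two dimension counts coincide, i.e. that summing $\mfS_k$-orbit sizes over dominant representatives recovers $\car{T(\mfb)_{[0,n_0-1]}}$; everything else is bookkeeping on supports.
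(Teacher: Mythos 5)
Your proposal is correct and follows essentially the same route as the paper: compute $\dim_{\mbc}\mcv_{M,[0,n_0-1]}^{\lambda_M}=n_0^k$ from the support analysis of Lemma \ref{lemmaVlambdasupport} (via Lemma \ref{lemmaHMVMsupportadd}), obtain a linearly independent set of the same cardinality by transporting Corollary \ref{corPhi't} through the isomorphisms $f_{\lambda_P,\lambda'}$ and $r_N$ (using Propositions \ref{propPhit} and \ref{propPhitiPhi'tivarphiti}), and get the containment in $\mcv_{M,[0,n_0-1]}^{\lambda_M}$ from the lemma on $\mrsupp(r_N(T_w\ast\Phi_t))$. Your write-up merely spells out the orbit-size bookkeeping and the transport of linear independence that the paper leaves implicit.
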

	
	\begin{proof}
		
		By Lemma \ref{lemmaVlambdasupport} and Corollary \ref{corPhi't}, $\mcv_{M,[0,n_0-1]}^{\lambda_M}$ is of dimension  $n_0^k$, and  $\bigcup_{i=1}^{u}r_N(\mcb(\mch_0\ast\Phi_{t_i'}))$ is a set of linearly independent functions of cardinality $n_0^k$. 
		Using the above lemma, $\bigcup_{i=1}^{u}r_N(\mcb(\mch_0\ast\Phi_{t_i'}))\subset \mcv_{M,[0,n_0-1]}^{\lambda_M}$. So the proof is finished.
		
	\end{proof}
	
	Recall the definition of $T_0(\mfb)\subset T(\mfb,\varrho)$ in \S \ref{subsectionsimpletypes}.  Let $\mca_0=\mbc[T_0(\mfb)]\subset\mca=\mbc[T(\mfb,\varrho)]$. Via the isomorphism $\mca\cong \mch(\ol{M},\lambda_M)$, we may identify $\mca_0$ with the subspace of functions in $\mch(\ol{M},\lambda_M)$ that are supported on $\ol{J_M}T_0(\mfb)\ol{J_M}$. Using Lemma \ref{lemmaHMVMsupportadd}, the following proposition is clear.
	
	\begin{proposition}\label{propA0modbasis}
		
		The disjoint union	$\bigcup_{i=1}^{u'}r_N(\mcb(\mch_0\ast\Phi_{t_i'}))$ forms a free $\mca_0$-module basis of $\mcv_M^{\lambda_M}$.
		
	\end{proposition}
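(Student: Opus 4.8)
The plan is to combine Lemma~\ref{lemmaV0n0-1basis}, which identifies $\mcb:=\bigcup_{i=1}^{u'}r_N(\mcb(\mch_0\ast\Phi_{t_i'}))$ as a $\mbc$-basis of the truncated subspace $\mcv_{M,[0,n_0-1]}^{\lambda_M}$, with the support-translation description of the $\mca$-action on $\mcv_M^{\lambda_M}$ coming from Lemma~\ref{lemmaHMVMsupportadd}. First I would record the $\mbc$-vector space shape of $\mcv_M^{\lambda_M}$: by Lemma~\ref{lemmaVlambdasupport} and Proposition~\ref{propVMlambdaMfree}, the double cosets $U_Mt\ol{J_M}$ for $t\in T(\mfb)$ are pairwise distinct (a routine consequence of $\bs{J}_M=T(\mfb)J_M$, $\bs{J}_M\cap U_M=J_M\cap U_M$ and $T(\mfb)\cap J_M=\{1\}$), every $\varphi\in\mcv_M^{\lambda_M}$ is supported on finitely many of them, and for each $t$ there is $\varphi_t$, unique up to a scalar, with $\mrsupp(\varphi_t\rest_{\lambda_M})=U_Mt\ol{J_M}$; hence $\mcv_M^{\lambda_M}=\bigoplus_{t\in T(\mfb)}\mbc\varphi_t$ and $\mcv_{M,[0,n_0-1]}^{\lambda_M}=\bigoplus_{t\in T(\mfb)_{[0,n_0-1]}}\mbc\varphi_t$.

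Since $T_0(\mfb)\subset T(\mfb,\varrho)\subset N_M(\lambda_M)$, Lemma~\ref{lemmaHMVMsupportadd} shows that for $t_0\in T_0(\mfb)$ the standard basis vector $\phi_{t_0}$ of $\mca_0$, viewed inside $\mch(\ol{M},\lambda_M)$, sends $\varphi_t$ to a nonzero multiple of $\varphi_{tt_0^{-1}}$; that is, $\phi_{t_0}$ permutes the lines $\mbc\varphi_t$ by $t\mapsto tt_0^{-1}$, hence acts injectively and merely translates supports by $t_0^{-1}$. Now I would invoke the Euclidean decomposition of the lattice $T(\mfb)\cong\mbz^k$ by its sublattice $T_0(\mfb)\cong n_0\mbz^k$: every $t\in T(\mfb)$ is uniquely $t't_0$ with $t'\in T(\mfb)_{[0,n_0-1]}$ and $t_0\in T_0(\mfb)$, so $T(\mfb)=\bigsqcup_{t_0\in T_0(\mfb)}T(\mfb)_{[0,n_0-1]}\,t_0^{-1}$. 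Combined with the preceding, $\phi_{t_0}$ carries $\mcv_{M,[0,n_0-1]}^{\lambda_M}$ isomorphically onto $\bigoplus_{t\in T(\mfb)_{[0,n_0-1]}t_0^{-1}}\mbc\varphi_t$, and these images are supported on disjoint families of double cosets and together span $\mcv_M^{\lambda_M}$; that is, $\mcv_M^{\lambda_M}=\bigoplus_{t_0\in T_0(\mfb)}\phi_{t_0}\bigl(\mcv_{M,[0,n_0-1]}^{\lambda_M}\bigr)$ as $\mbc$-spaces.

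Applying each injective $\phi_{t_0}$ to the basis $\mcb$ of Lemma~\ref{lemmaV0n0-1basis} and taking the union (which is automatically disjoint) over $t_0\in T_0(\mfb)$ then produces a $\mbc$-basis $\{\phi_{t_0}\ast b\mid t_0\in T_0(\mfb),\ b\in\mcb\}$ of $\mcv_M^{\lambda_M}$; since $\{\phi_{t_0}\mid t_0\in T_0(\mfb)\}$ is a $\mbc$-basis of the commutative algebra $\mca_0$, this is exactly the statement that $\mcb$ is a free $\mca_0$-module basis of $\mcv_M^{\lambda_M}$, consistently with the fact that $|\mcb|=\sum_{i=1}^{u'}\dim_{\mbc}(\mch_0\ast\Phi_{t_i'})=n_0^k$ equals the $\mca_0$-rank of $\mcv_M^{\lambda_M}$ predicted by Proposition~\ref{propVMlambdaMfree}. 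I do not anticipate a genuine obstacle here: the only point demanding care is the bookkeeping of supports — distinctness of the $U_Mt\ol{J_M}$ and the fact that the translates $T(\mfb)_{[0,n_0-1]}\,t_0^{-1}$ tile $T(\mfb)$ — and once Lemmas~\ref{lemmaV0n0-1basis} and~\ref{lemmaHMVMsupportadd} are in hand the rest is formal, which is why the statement is labelled clear.
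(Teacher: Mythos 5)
Your argument is correct and is exactly the route the paper intends: it states the proposition as ``clear'' from Lemma~\ref{lemmaHMVMsupportadd}, i.e.\ one combines the $\mbc$-basis of the truncated space $\mcv_{M,[0,n_0-1]}^{\lambda_M}$ from Lemma~\ref{lemmaV0n0-1basis} with the fact that $\phi_{t_0}$, $t_0\in T_0(\mfb)$, translates the one-dimensional support lines $\mbc\varphi_t$ along the tiling of $T(\mfb)\cong\mbz^k$ by $T_0(\mfb)$-translates of the box $T(\mfb)_{[0,n_0-1]}$. Your write-up simply makes explicit the bookkeeping (distinctness of the cosets $U_Mt\ol{J_M}$ and the tiling) that the paper leaves implicit, so there is nothing to correct.
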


	\subsection{A general strategy}
	
	Now we would like to fully determine $\mcv^{\lambda_P}$ as an $\mch$-module. Our strategy is as follows:
	\begin{equation}\label{eqstrategy}
		\text{\parbox{.85\textwidth}{Find $t_1,\dots,t_u\in T(\mfb)_{\leq}$, such that
				\begin{itemize}
					\item The $\mfS_k$-orbit of $t_i$ is the same as the $\mfS_k$-orbit of its image in $T(\mfb)/T(\mfb,\varrho)$.
					\item  The disjoint union $\bigcup_{i=1}^{u}\mcb(\mch_0\ast\Phi_{t_i})$ forms a free $\mca$-module basis of $\mcv^{\lambda_P}$ for related $\Phi_{t_1},\dots,\Phi_{t_u}\in \mcv^{\lambda_P}$, or equivalently, $\bigcup_{i=1}^{u}r_N(\mcb(\mch_0\ast\Phi_{t_i}))$ forms a free $\mca$-module basis of $\mcv_M^{\lambda_M}$. Here, for each $i$ we let $\mcb(\mch_0\ast\Phi_{t_i})$ be a $\mbc$-basis of $\mch_0\ast\Phi_{t_i}$.
		\end{itemize}}}
	\end{equation}
	As a result, by Corollary \ref{corPhi't}.(2) and Proposition \ref{propPhitiPhi'tivarphiti} we necessarily have
	$$\mcv^{\lambda_P}\cong\bigoplus_{i=1}^{u}(\mca\otimes_\mbc\mch_0)\ast\Phi_{t_i}$$
	as $\mch=\mca\otimes_\mbc\mch_0$ modules, where each $(\mca\otimes_\mbc\mch_0)\ast\Phi_{t_i}$ is isomorphic to $\mca\otimes_\mbc\mch_0\otimes_{\mch_{\mco_{t_i}}}\varepsilon_{\mco_{t_i}}$. Thus Theorem \ref{thmmain} is verified.
	
	\subsection{Fulfillment of strategy \eqref{eqstrategy} for the Savin cover}
	
	To fulfill strategy \eqref{eqstrategy}, from now on we assume $\ol{G}$ to be either a Kazhdan-Patterson cover or the Savin cover. 
	
	In this case, it is direct to verify that the $\mfS_k$-orbit of $t_i'$ is the same as the $\mfS_k$-orbit of its image in $\mcx(\lambda)=T(\mfb)/T(\mfb,\varrho)$.
	
	Assume that $\ol{G}$ is the Savin cover. In this case, we have $T_0(\mfb)=T(\mfb,\varrho)$ and $\mca_0=\mca$. Then we may simply choose $u=u'$ and $t_i=t_i'$, then using Proposition \ref{propA0modbasis} the strategy \eqref{eqstrategy} is fulfilled.
	
	\subsection{Fulfillment of strategy \eqref{eqstrategy} for the Kazhdan--Patterson covers}
	
	Assume that $\ol{G}$ is a Kazhdan-Patterson cover. In this case, we have $T(\mfb,\varrho)=\pairangone{\zeta_E}T_0(\mfb)$ and $\mca$ is generated by $\mca_0$ and $\zeta_E$. In particular, $\zeta_E^m\in T_0(\mfb)$ if and only if $n_0/d_0$ divides $m$, thus $\mcx(\lambda)=T(\mfb)/T(\mfb,\varrho)$ is of cardinality $n_0^{k-1}d_0$.
	
	We relate $\zeta_E$ to a function $\phi_{\zeta_E}\in\mch(\ol{M},\lambda_M)$ supported on $\ol{J_M}\zeta_E\ol{J_M}$, then $t_P(\phi_{\zeta_E})$ is central as an element in $\mch(\ol{G},\lambda_P)$. 
	
	We define $\ord{\cdot}$ on $T(\mfb)$ as in Lemma \ref{lemmaTwphit}.(2).
	
	We consider an equivalence relation on  $T(\mfb)$: for $t_1,t_2\in T(\mfb)$, we write $t_1\sim_{T(\mfb,\varrho)} t_2$ if $t_1/t_2\in T(\mfb,\varrho)$. 
	
	\begin{lemma}
		
		Let $\ol{G}$ be the Kazhdan--Patterson cover $\ol{G}_{\mathrm{KP}}^{\bs{c}}$.  
		
		\begin{enumerate}
			
			\item We have $n_0/d_0=\mrgcd(n/l_0,2\bs{c}r+r-1)$ and $\mrgcd(n_0/d_0,k)=1$.
			
			\item For $t_1,t_2\in T(\mfb)$ such that $t_1\sim_{T(\mfb,\varrho)} t_2$, we have $t_1/t_2\in T_0(\mfb)$ if and only if $\ord{t_1}\equiv\ord{t_2}$ (mod $n_0$). 
			
			\item For each $t_i'\in T(\mfb)_{\leq,[0,n_0-1]}$, the related elements in $\mfS_k\cdot t_{i}'$ lie in different equivalence classes.
			
		\end{enumerate}
		
	\end{lemma}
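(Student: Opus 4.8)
The plan is to prove the three assertions in that order, with (1) a short computation on Hilbert-symbol exponents that then feeds the rest. For (1), recall that a Kazhdan--Patterson cover has $\bs{d}=1$, and since $l_0\mid n$ the defining gcd collapses to $\mrgcd(n,(2\bs{c}+1)r_0l_0,l_0)=l_0$, whence $n_0=n/l_0$. Similarly $d_0=n/\mrgcd(n,l_0(2\bs{c}r+r-1))$, so, again using $l_0\mid n$,
\[
n_0/d_0=\frac{\mrgcd\bigl(n,\,l_0(2\bs{c}r+r-1)\bigr)}{l_0}=\mrgcd\bigl(n/l_0,\,2\bs{c}r+r-1\bigr).
\]
For coprimality with $k$, write $r=r_0k$; then $2\bs{c}r+r-1=r_0k(2\bs{c}+1)-1\equiv-1\pmod k$, and since $n_0/d_0$ divides $2\bs{c}r+r-1$ it must be coprime to $k$.

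For (2), I would invoke Proposition~\ref{propintertwinigset}.(3), which for the Kazhdan--Patterson cover gives $T(\mfb,\varrho)=\pairangone{\zeta_E}T_0(\mfb)$ with $\zeta_E=\mrdiag(\varpi_E^{d_0}I_{m_0},\dots,\varpi_E^{d_0}I_{m_0})$; thus $\ord{\zeta_E}=kd_0$ and, as $d_0\mid n_0$, the element $\zeta_E$ has order $n_0/d_0$ modulo $T_0(\mfb)$. Given $t_1\sim_{T(\mfb,\varrho)}t_2$, write $t_1/t_2=\zeta_E^{a}t_0$ with $t_0\in T_0(\mfb)$ and $a$ well-defined modulo $n_0/d_0$; then $t_1/t_2\in T_0(\mfb)$ if and only if $(n_0/d_0)\mid a$. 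On the other hand $\ord{t_1}-\ord{t_2}=\ord{t_1/t_2}\equiv akd_0\pmod{n_0}$, and this residue is well-defined; it vanishes if and only if $(n_0/d_0)\mid ak$, which by (1), i.e. by $\mrgcd(n_0/d_0,k)=1$, is equivalent to $(n_0/d_0)\mid a$. Comparing the two criteria yields (2).

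For (3), which I expect to be the substantive step, write $t_i'=\mrdiag(\varpi_E^{s_1}I_{m_0},\dots,\varpi_E^{s_k}I_{m_0})$ with $0\le s_1\le\dots\le s_k\le n_0-1$ (legitimate since $t_i'\in T(\mfb)_{\leq,[0,n_0-1]}$). Suppose $\sigma\cdot t_i'\sim_{T(\mfb,\varrho)}\tau\cdot t_i'$ for $\sigma,\tau\in\mfS_k$; I must show $\sigma\cdot t_i'=\tau\cdot t_i'$. The quotient $\sigma\cdot t_i'/\tau\cdot t_i'$ has exponent vector $\bigl(s_{\sigma(j)}-s_{\tau(j)}\bigr)_{1\le j\le k}$, each coordinate lying in $\{-(n_0-1),\dots,n_0-1\}$ and the coordinates summing to $0$ (both $\sum_j s_{\sigma(j)}$ and $\sum_j s_{\tau(j)}$ equal $s_1+\dots+s_k$). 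Membership in $T(\mfb,\varrho)=\pairangone{\zeta_E}T_0(\mfb)$ forces all coordinates to be congruent modulo $n_0$ to a common value $c$ with $d_0\mid c$; normalizing $0\le c\le n_0-1$, each coordinate equals $c$ or $c-n_0$, and if $q$ of them equal $c-n_0$ the vanishing of the sum reads $kc=qn_0$. Writing $c=d_0c'$ with $0\le c'<n_0/d_0$ gives $kc'=q\,(n_0/d_0)$; since $\mrgcd(k,n_0/d_0)=1$ by (1), this forces $(n_0/d_0)\mid c'$, hence $c'=0$, hence $c=0$ and $q=0$. Therefore $s_{\sigma(j)}=s_{\tau(j)}$ for all $j$, i.e. $\sigma\cdot t_i'=\tau\cdot t_i'$.

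The genuinely cover-specific input is the coprimality $\mrgcd(n_0/d_0,k)=1$ from (1); combined with the elementary observation that all exponents occurring in an $\mfS_k$-orbit of $t_i'$ stay within a window of length $n_0$, it is exactly this that rigidifies $\sim_{T(\mfb,\varrho)}$ on a single orbit down to equality, so (3) — the step on which the fulfilment of strategy~\eqref{eqstrategy} for Kazhdan--Patterson covers ultimately rests — is the main hurdle. The calculations I have glossed over are the two unravellings of $\mrgcd$'s in (1) and the check that an element of $\pairangone{\zeta_E}T_0(\mfb)$ all of whose exponents lie in $(-n_0,n_0)$ has every exponent in $\{c,c-n_0\}$ for some $0\le c<n_0$.
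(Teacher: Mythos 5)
Your proof is correct and takes essentially the same route as the paper: (1) and (2) are the same gcd and $\zeta_E$-computations, resting on $T(\mfb,\varrho)=\pairangone{\zeta_E}T_0(\mfb)$ and $\mrgcd(n_0/d_0,k)=1$. For (3) the paper argues via (2) — equal $\ord{\cdot}$ on the orbit puts the quotient in $T_0(\mfb)$, and exponents confined to $[0,n_0-1]$ then force equality — which is the same idea as your direct analysis of the quotient's exponents in $\pairangone{\zeta_E}T_0(\mfb)$, using the same coprimality and the same window bound.
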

	
	\begin{proof}
		
		We prove statement (1). By definition, we have $n_0=n/\mrgcd(n,l_0)=n/l_0$ and $d_0=n/\mrgcd(n,l_0(2\bs{c}r+r-1))$, and thus $n_0/d_0=\mrgcd(n/l_0,2\bs{c}r+r-1)$. Since $k$ divides $r$ and is relatively prime to $2\bs{c}r+r-1$, we have $\mrgcd(n_0/d_0,k)=1$.
		
		We prove statement (2). For $t\in T_0(\mfb)$ we have $n_0$ divides $\ord{t}$. Also, we have $\ord{\zeta_E}=kd_0$. Thus by (1), $\zeta_E^m\in T_0(\mfb)$ if and only if $n_0/d_0$ divides $m$, if and only if $n_0$ divides $\ord{\zeta_E^m}$. So statement (2) readily follows.
		
		Now we prove statement (3). Assume that $t_i'$ and $w\cdot t_i'$ are equivalent elements in $T(\mfb)_{[0,n_0-1]}$. Since $\ord{w\cdot t_i'}=\ord{t_i'}$, by (2) we have $w\cdot t_i'/t_i'\in T_0(\mfb)$. Since $T_0(\mfb)\cap T(\mfb)_{[0,n_0-1]}=\{1\}$, we must have $w\cdot t_i'=t_i'$.
		
	\end{proof}
	
	Now we let  $\{t_1,\cdots,t_u\}$ be a subset of $\{t_1',\dots,t_{u'}'\}$ consisting of elements $t'$, such that the remainder of $\ord{t'}$ modulo $n_0$ lies in $\{0,1,\dots,d_0-1\}$. 
	
	Using the above lemma, for $m=0,1,\dots,n_0/d_0-1$, the disjoint union $\bigcup_{i=1}^u(\mfS_k\cdot t_u)\zeta_E^m$ consists of elements $t'$, such that the remainder of $\ord{t'}$ modulo $n_0$ lies in $\{mkd_0,mkd_0+1,\dots,mkd_0+d_0-1\}$. Thus for different $m$ the related unions are pairwise disjoint. As a result,  $\bigcup_{i=1}^u\mfS_k\cdot t_u\subset T(\mfb)_{[0,n_0-1]}$ is a set of representatives of $T(\mfb)/T(\mfb,\varrho)$.
	
	We claim that
	$$\bigcup_{m=0}^{n_0/d_0-1}\bigcup_{i=1}^u\phi_{\zeta}^{-m}\ast r_N(\mcb(\mch_0\ast\Phi_{t_i}))$$ 
	forms a free $\mca_0$-module basis of $\mcv_M^{\lambda_M}$. Indeed from our construction, Lemma \ref{lemmaHMVMsupportadd}, Lemma \ref{lemmaTwphit} and Lemma \ref{lemmaV0n0-1basis}, the free $\mca_0$-module generated by $$\bigcup_{i=1}^u r_N(\mcb(\mch_0\ast\Phi_{t_i}))$$ consists of elements in $\mcv_M^{\lambda_M}$ supported on all the possible finite union of double cosets  $\cup_{t'}U_Mt'\ol{J_M}$, where the remainder of $\ord{t'}$ modulo $n_0$ lies in $\{0,1,\dots,d_0-1\}$. Thus, the free $\mca_0$-module generated by $$\bigcup_{i=1}^u\phi_{\zeta}^{-m}\ast r_N(\mcb(\mch_0\ast\Phi_{t_i}))$$
	consists of elements in $\mcv_M^{\lambda_M}$ supported on all the possible finite union of double cosets  $\cup_{t'}U_Mt'\ol{J_M}$, where the remainder of $\ord{t'}$ modulo $n_0$ lies in $\{mkd_0,mkd_0+1,\dots,mkd_0+d_0-1\}$. So the above claim is verified.
	
	Thus in this case, 
	$\bigcup_{i=1}^{u}r_{N}(\mcb(\mch_0\ast\Phi_{t_i}))$ forms a free $\mca$-basis and the strategy \eqref{eqstrategy} is fulfilled for $\{t_1,\cdots,t_u\}$ as above. As a result, Theorem \ref{thmmain} is proved.

	\section{Calculation of Gelfand--Graev dimension}
	
	Let $\ol{G}$ be either an $n$-fold Kazhdan--Patterson cover or the Savin cover of $G=\mrgl_r(F)$, and $(\ol{J},\lambda)$ a simple type of $\ol{G}$. Recall that $(\ol{J},\lambda)$ is a type of a discrete inertial equivalence class $\mfs$ of $\ol{G}$ (\cite{zou2023simple}*{\S 9.7}). Let $\pi$ be a genuine discrete series representation of $\ol{G}$ having inertial equivalence class $\mfs$. We would like to study the Gelfand--Graev dimension of $\pi$.
	
	\begin{proposition}\label{propdiscreteWhittaker}
		
		We have $\dim_{\mbc}\mrhom_{\ol{G}}(\mrind_U^{\ol{G}}(\psi),\pi)=\car{\mcx(\lambda)/\mfS_k}$.
		
	\end{proposition}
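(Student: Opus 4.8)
The plan is to transfer the computation to the Hecke algebra $\mch$ via the module functor $\bs{\mathrm{M}}_{\lambda}$, to use Theorem \ref{thmmain} to split $\mcv^{\lambda}$, and to evaluate each piece against the (one-dimensional) module $\pi^{\lambda}$ by a Frobenius reciprocity on the Hecke algebra side. First, since $\pi$ lies in the Bernstein block $\mfs=\mfs_G$ and $(\ol{J_P},\lambda_P)$ is an $\mfs_G$-type, Theorem \ref{thmheckealg}.(5) provides an equivalence of categories $\bs{\mathrm{M}}_{\lambda_P}\colon\mrrep_{\mfs_G}(\ol{G})\rightarrow\mrmod(\mch)$. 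Only the $\mfs_G$-component of $\mcv=\mrind_U^{\ol{G}}(\psi)$ admits nonzero maps to the irreducible $\pi$, and $\bs{\mathrm{M}}_{\lambda_P}$ annihilates the remaining components; so I would obtain
$$\mrhom_{\ol{G}}(\mrind_U^{\ol{G}}(\psi),\pi)\cong\mrhom_{\mch}(\mcv^{\lambda_P},\pi^{\lambda_P})\cong\mrhom_{\mch}(\mcv^{\lambda},\pi^{\lambda}),$$
where the last isomorphism is through $\mrind_{\ol{J_P}}^{\ol{J}}\lambda_P\cong\lambda$ and the algebra isomorphism of Theorem \ref{thmheckealg}.(1). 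This is exactly the isomorphism already announced in the introduction.

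Next I would identify $\pi^{\lambda}$. Since $(\ol{J},\lambda)$ is a type of a discrete inertial class and $\pi$ is a genuine discrete series representation of $\ol{G}$, the classification of genuine discrete series via simple types together with Zelevinsky's classification (\emph{cf.} \cite{zou2023simple}*{\S 9.7}) shows that $\pi^{\lambda}$ is the one-dimensional ``Steinberg'' module of the type-$A$ affine Hecke algebra $\mch\cong\mca\otimes_\mbc\mch_0$: the lattice part $\mca$ acts through some character (which will be irrelevant below) and the finite part $\mch_0$ acts through the sign character $\varepsilon_0$. In particular $\pi^{\lambda}\rest_{\mch_0}=\varepsilon_0$.

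Now by Theorem \ref{thmmain},
$$\mcv^{\lambda}=\bigoplus_{\mco\in\mcx(\lambda)/\mfS_k}\mcv_{\mco}^{\lambda},\qquad\mcv_{\mco}^{\lambda}\cong\mca\otimes_\mbc(\mch_0\otimes_{\mch_{\mco}}\varepsilon_{\mco}).$$
Viewing $\mch_{\mco}\subset\mch_0\subset\mch$ and using that $\mch$ is free as a right $\mch_{\mco}$-module (a Bernstein basis of $\mca$ times minimal-length representatives of $\mfS_k/W_{\mco}$), the $\mch$-module $\mca\otimes_\mbc(\mch_0\otimes_{\mch_{\mco}}\varepsilon_{\mco})$ is the induced module $\mch\otimes_{\mch_{\mco}}\varepsilon_{\mco}$; concretely, in the notation of the proof of Theorem \ref{thmmain}, $\mcv_{\mco}^{\lambda}$ is generated over $\mch$ by a vector on which $\mch_{\mco}$ acts through $\varepsilon_{\mco}$ and which is $\mca$-free of rank $\car{\mfS_k/W_{\mco}}$, forcing this identification. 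Then the tensor--hom adjunction and the previous paragraph give
$$\mrhom_{\mch}(\mcv_{\mco}^{\lambda},\pi^{\lambda})\cong\mrhom_{\mch_{\mco}}(\varepsilon_{\mco},\pi^{\lambda}\rest_{\mch_{\mco}})=\mrhom_{\mch_{\mco}}(\varepsilon_{\mco},\varepsilon_0\rest_{\mch_{\mco}})=\mrhom_{\mch_{\mco}}(\varepsilon_{\mco},\varepsilon_{\mco})\cong\mbc,$$
the middle equality because the sign character of $\mch_0$ restricts to the sign character of the parabolic sub-Hecke-algebra $\mch_{\mco}$. Summing over the $\car{\mcx(\lambda)/\mfS_k}$ orbits $\mco$ yields $\dim_{\mbc}\mrhom_{\ol{G}}(\mrind_U^{\ol{G}}(\psi),\pi)=\car{\mcx(\lambda)/\mfS_k}$.

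The first and third paragraphs are formal once Theorems \ref{thmheckealg} and \ref{thmmain} are granted; the one genuinely non-trivial ingredient is the identification in the second paragraph of $\pi^{\lambda}$ as the one-dimensional module with $\mch_0$ acting by $\varepsilon_0$, which rests on the type-theoretic classification of genuine discrete series of $\ol{G}$ and on the description of square-integrable modules of a type-$A$ affine Hecke algebra. I expect that to be the main obstacle; everything else reduces to the Hecke-algebra bookkeeping above. (For $n=1$ one has $\mcx(\lambda)=\{1\}$, and the argument collapses to $\mrhom_{\mch}(\mrInd_{\mch_0}^{\mch}\varepsilon_0,\pi^{\lambda})\cong\mbc$, recovering the classical multiplicity-one statement as in \cite{chan2019bernstein}*{Theorem 3.4}.)
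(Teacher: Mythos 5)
Your proposal is correct and follows essentially the same route as the paper: apply $\bs{\mathrm{M}}_\lambda$, use Theorem \ref{thmmain} to split $\mcv^{\lambda}$, and evaluate each summand against the one-dimensional module $\pi^{\lambda}$ (whose restriction to $\mch_0$ is $\varepsilon_0$) by Frobenius reciprocity over the Hecke algebras. The one ingredient you flag as the main obstacle --- that $\pi^{\lambda}$ is one-dimensional with $\mch_0$ acting by the sign character --- is exactly what the paper quotes from \cite{zou2023simple}*{Proposition 9.10}.
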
 
	
	\begin{proof}
		Taking the functor $\bs{\mathrm{M}}_\lambda$, we have
		$$\dim_{\mbc}\mrhom_{\ol{G}}(\mrind_U^{\ol{G}}(\psi),\pi)=\dim_{\mbc}\mrhom_{\mch}(\mcv^\lambda,\pi^\lambda).$$
		Using \cite{zou2023simple}*{Proposition 9.10}, $\pi^\lambda$ is a one-dimensional $\mch$-module, so that as a $\mch_0$-module it is isomorphism to the sign character $\varepsilon_0$.
		
		Using Theorem \ref{thmmain} and Frobenius reciprocity, we further have
		\begin{equation}
			\begin{aligned}
				\dim_{\mbc}\mrhom_{\mch}(\mcv^\lambda,\pi^\lambda)&=\dim_{\mbc}\mrhom_{\mch}(\bigoplus_{\mco\in\mcx(\lambda)/\mfS_k }\mca\otimes_{\mbc}(\mch_{0}\otimes_{\mch_{\mco}}\varepsilon_{\mco}),\pi^\lambda)\\
				&=\dim_{\mbc}\mrhom_{\mch_0}(\bigoplus_{\mco\in\mcx(\lambda)/\mfS_k }(\mch_{0}\otimes_{\mch_{\mco}}\varepsilon_{\mco}),\varepsilon_0)\\
				&=\dim_{\mbc}\bigoplus_{\mco\in\mcx(\lambda)/\mfS_k }\mrhom_{\mch_{\mco}}(\varepsilon_{\mco},\varepsilon_0\rest_{\mch_{\mco}})\\
				&=\car{\mcx(\lambda)/\mfS_k}.
			\end{aligned}
		\end{equation}
	\end{proof}
	
	We remark that by direct calculation
	\begin{equation}\label{eqGGdim}
		\car{\mcx(\lambda)/\mfS_k}=\begin{cases}
			\binom{k+n_0-1}{k}d_0/n_0\quad&\text{if}\ \ol{G} \ \text{is a Kazhdan--Patterson cover;}\\
			\binom{k+n_0-1}{k}\quad&\text{if}\ \ol{G} \ \text{is the Savin cover.}
		\end{cases}
	\end{equation}
	Here, we denote by $\binom{b}{a}$
	the binomial coefficient of $x^a$ in $(1+x)^{b}$. 
	
	Since every discrete series representation corresponds to a simple type as above (\emph{cf.} \cite{zou2023simple}*{Proposition 9.12}), the above proposition calculates the Gelfand--Graev dimension of all the discrete series representations. 
	
	Using the Zelevinsky classification (\emph{cf.} \cite{kaplan2022classification}), theoretically we may calculate the Gelfand--Graev dimension for any irreducible representation, which should be a linear combination of terms as in \eqref{eqGGdim} with coefficients being plus or minus Kazhdan--Lusztig polynomials evaluated at 1 (\cite{lapid2018geometric}*{Section 10}).

	\begin{bibdiv}
		\begin{biblist}
			
			\bib{banks1998heredity}{article}{
				author={Banks, W.~D.},
				title={Heredity of {W}hittaker models on the metaplectic group},
				date={1998},
				ISSN={0030-8730},
				journal={Pacific J. Math.},
				volume={185},
				number={1},
				pages={89\ndash 96},
				url={https://doi.org/10.2140/pjm.1998.185.89},
				review={\MR{1653196}},
			}
			
			\bib{banks1999block}{article}{
				author={Banks, W.~D.},
				author={Levy, J.},
				author={Sepanski, M.~R.},
				title={Block-compatible metaplectic cocycles},
				date={1999},
				ISSN={0075-4102},
				journal={J. Reine Angew. Math.},
				volume={507},
				pages={131\ndash 163},
				url={https://doi.org/10.1515/crll.1999.011},
				review={\MR{1670203}},
			}
			
			\bib{bernstein1976representations}{article}{
				author={Bernstein, I.~N.},
				author={Zelevinsky, A.~V.},
				title={Representations of the group {$\mathrm{GL}(n,F),$} where {$F$} is
					a local non-{A}rchimedean field},
				date={1976},
				ISSN={0042-1316},
				journal={Uspehi Mat. Nauk},
				volume={31},
				number={3(189)},
				pages={5\ndash 70},
				review={\MR{0425030}},
			}
			
			\bib{blondel1992uniqueness}{article}{
				author={Blondel, C.},
				title={Uniqueness of {W}hittaker model for some supercuspidal
					representations of the metaplectic group},
				date={1992},
				ISSN={0010-437X},
				journal={Compositio Math.},
				volume={83},
				number={1},
				pages={1\ndash 18},
				url={http://www.numdam.org/item?id=CM_1992__83_1_1_0},
				review={\MR{1168120}},
			}
			
			\bib{brylinski2001central}{article}{
				author={Brylinski, J.-L.},
				author={Deligne, P.},
				title={Central extensions of reductive groups by {$\bold K_2$}},
				date={2001},
				ISSN={0073-8301},
				journal={Publ. Math. Inst. Hautes \'{E}tudes Sci.},
				number={94},
				pages={5\ndash 85},
				url={https://doi.org/10.1007/s10240-001-8192-2},
				review={\MR{1896177}},
			}
			
			\bib{bushnell1998supercuspidal}{article}{
				author={Bushnell, C.~J.},
				author={Henniart, G.},
				title={Supercuspidal representations of {${\rm GL}_n$}: explicit
					{W}hittaker functions},
				date={1998},
				ISSN={0021-8693},
				journal={J. Algebra},
				volume={209},
				number={1},
				pages={270\ndash 287},
				url={https://doi.org/10.1006/jabr.1998.7542},
				review={\MR{1652130}},
			}
			
			\bib{bushnell129admissible}{book}{
				author={Bushnell, C.~J.},
				author={Kutzko, P.~C.},
				title={The admissible dual of {${\rm GL}(N)$} via compact open
					subgroups},
				series={Annals of Mathematics Studies},
				publisher={Princeton University Press, Princeton, NJ},
				date={1993},
				volume={129},
				ISBN={0-691-03256-4; 0-691-02114-7},
				url={https://doi.org/10.1515/9781400882496},
				review={\MR{1204652}},
			}
			
			\bib{chan2018iwahori}{article}{
				author={Chan, K.~Y.},
				author={Savin, G.},
				title={Iwahori component of the {Gelfand--Graev} representation},
				date={2018},
				journal={Mathematische Zeitschrift},
				volume={288},
				pages={125\ndash 133},
				review={\MR{MR3774407}},
			}
			
			\bib{chan2019bernstein}{article}{
				author={Chan, K.~Y.},
				author={Savin, G.},
				title={Bernstein--{Zelevinsky} derivatives: a {Hecke} algebra approach},
				date={2019},
				ISSN={1073-7928},
				journal={Int. Math. Res. Not.},
				volume={2019},
				number={3},
				pages={731\ndash 760},
				review={\MR{4579360}},
			}
			
			\bib{gan2018groups}{incollection}{
				author={Gan, W.~T.},
				author={Gao, F.},
				author={Weissman, M.~H.},
				title={L-groups and the {L}anglands program for covering groups: a
					historical introduction},
				date={2018},
				pages={1\ndash 31},
				note={L-groups and the Langlands program for covering groups},
				review={\MR{3802417}},
			}
			
			\bib{gao2023r}{article}{
				author={Gao, F.},
				title={R-group and {Whittaker} space of some genuine representations},
				date={2023},
				journal={Journal of the Institute of Mathematics of Jussieu},
				volume={22},
				number={1},
				pages={213\ndash 273},
			}
			
			\bib{gao2022gelfand}{article}{
				author={Gao, F.},
				author={Gurevich, N.},
				author={Karasiewicz, E.},
				title={{Gelfand--Graev functor and quantum affine Schur--Weyl} duality},
				date={2022},
				journal={arXiv preprint arXiv:2210.16138},
			}
			
			\bib{gao2024genuine}{article}{
				author={Gao, F.},
				author={Gurevich, N.},
				author={Karasiewicz, E.},
				title={Genuine {pro-$p$ Iwahori--Hecke algebras, Gelfand--Graev}
					representations, and some applications},
				date={2024},
				journal={Journal of the European Mathematical Society},
			}
			
			\bib{gao2019whittaker}{article}{
				author={Gao, F.},
				author={Weissman, M.~H.},
				title={Whittaker models for depth zero representations of covering
					groups},
				date={2019},
				ISSN={1073-7928},
				journal={Int. Math. Res. Not. IMRN},
				number={11},
				pages={3580\ndash 3620},
				url={https://doi.org/10.1093/imrn/rnx235},
				review={\MR{3961710}},
			}
			
			\bib{kaplan2019doubling}{article}{
				author={Kaplan, E.},
				title={Doubling constructions and tensor product {$L$}-functions:
					coverings of the symplectic group},
				date={2019},
				journal={arXiv preprint arXiv:1902.00880},
			}
			
			\bib{kaplan2022rankin}{article}{
				author={Kaplan, E.},
				title={{Rankin-Selberg Integrals and L-Functions for Covering Groups of
						General Linear Groups}},
				date={2022},
				journal={Int. Math. Res. Not. IMRN},
			}
			
			\bib{kaplan2022classification}{article}{
				author={Kaplan, E.},
				author={Lapid, E.},
				author={Zou, J.},
				title={Classification of irreducible representations of metaplectic
					covers of the general linear group over a non-{Archimedean} local field},
				date={2023},
				ISSN={1088-4165},
				journal={Represent. Theory},
				volume={27},
				pages={1041\ndash 1087},
				review={\MR{4663361}},
			}
			
			\bib{kaplan2022note}{article}{
				author={Kaplan, Eyal},
				author={Szpruch, Dani},
				title={A note on the representation theory of central extensions of
					reductive p-adic groups},
				date={2023},
				journal={Communications in Algebra},
				volume={51},
				number={10},
				pages={4363\ndash 4371},
			}
			
			\bib{kazhdan1984metaplectic}{article}{
				author={Kazhdan, D.~A.},
				author={Patterson, S.~J.},
				title={Metaplectic forms},
				date={1984},
				ISSN={0073-8301},
				journal={Inst. Hautes \'{E}tudes Sci. Publ. Math.},
				number={59},
				pages={35\ndash 142},
				url={http://www.numdam.org/item?id=PMIHES_1984__59__35_0},
				review={\MR{743816}},
			}
			
			\bib{lapid2018geometric}{article}{
				author={Lapid, E.},
				author={M{\'\i}nguez, A.},
				title={Geometric conditions for {$\square$-irreducibility} of certain
					representations of the general linear group over a non-archimedean local
					field},
				date={2018},
				journal={Advances in Mathematics},
				volume={339},
				pages={113\ndash 190},
				review={\MR{MR3866895}},
			}
			
			\bib{matsumoto1969sous}{article}{
				author={Matsumoto, H.},
				title={Sur les sous-groupes arithm\'{e}tiques des groupes semi-simples
					d\'{e}ploy\'{e}s},
				date={1969},
				ISSN={0012-9593},
				journal={Ann. Sci. \'{E}cole Norm. Sup. (4)},
				volume={2},
				pages={1\ndash 62},
				url={http://www.numdam.org/item?id=ASENS_1969_4_2_1_1_0},
				review={\MR{240214}},
			}
			
			\bib{moeglin1995spectral}{book}{
				author={M\oe~glin, C.},
				author={Waldspurger, J.-L.},
				title={Spectral decomposition and {E}isenstein series},
				series={Cambridge Tracts in Mathematics},
				publisher={Cambridge University Press, Cambridge},
				date={1995},
				volume={113},
				ISBN={0-521-41893-3},
				url={https://doi.org/10.1017/CBO9780511470905},
				note={Une paraphrase de l'\'{E}criture [A paraphrase of Scripture]},
				review={\MR{1361168}},
			}
			
			\bib{paskunas2008realization}{article}{
				author={Paskunas, V.},
				author={Stevens, S.},
				title={On the realization of maximal simple types and epsilon factors of
					pairs},
				date={2008},
				journal={Amer. J. Math.},
				volume={130},
				number={5},
				pages={1211\ndash 1261},
				review={\MR{MR2450207}},
			}
			
			\bib{patel2014theorem}{article}{
				author={Patel, S.~P.},
				title={A theorem of {M}\oe glin and {W}aldspurger for covering groups},
				date={2015},
				ISSN={0030-8730},
				journal={Pacific J. Math.},
				volume={273},
				number={1},
				pages={225\ndash 239},
				url={https://doi.org/10.2140/pjm.2015.273.225},
				review={\MR{3290452}},
			}
			
			\bib{rodier1975modele}{article}{
				author={Rodier, F.},
				title={Mod{\`e}le de {Whittaker} et caract{\`e}res de
					repr{\'e}sentations},
				date={1975},
				journal={Non-commutative harmonic analysis, Lecture Notes in Math., Vol.
					466, Springer, Berlin},
				pages={151\ndash 171},
				review={\MR{MR0393355}},
			}
			
			\bib{secherre2016block}{article}{
				author={S{\'e}cherre, V.},
				author={Stevens, S.},
				title={Block decomposition of the category of l-modular smooth
					representations of {$\mathrm{GL}(n,F)$} and its inner forms},
				date={2016},
				volume={49},
				number={3},
				pages={669\ndash 709},
				review={\MR{MR3503829}},
			}
			
			\bib{shalika1974multiplicity}{article}{
				author={Shalika, J.~A.},
				title={The multiplicity one theorem for {$\mathrm{GL}_{n}$}},
				date={1974},
				journal={Ann. of Math.},
				volume={100},
				number={2},
				pages={171\ndash 193},
				review={\MR{MR348047}},
			}
			
			\bib{silberger2000characters}{article}{
				author={Silberger, A.},
				author={Zink, E.-W.},
				title={The characters of the generalized steinberg representations of
					finite general linear groups on the regular elliptic set},
				date={2000},
				journal={Transactions of the American Mathematical Society},
				volume={352},
				number={7},
				pages={3339\ndash 3356},
				review={\MR{MR4757534}},
			}
			
			\bib{solleveld2021affine}{article}{
				author={Solleveld, M.},
				title={Affine {H}ecke algebras and their representations},
				date={2021},
				ISSN={0019-3577},
				journal={Indag. Math. (N.S.)},
				volume={32},
				number={5},
				pages={1005\ndash 1082},
				url={https://doi.org/10.1016/j.indag.2021.01.005},
				review={\MR{4310011}},
			}
			
			\bib{tam2019explicit}{article}{
				author={Tam, G. K.-F.},
				title={Explicit {Whittaker} data for essentially tame supercuspidal
					representations},
				date={2019},
				journal={Pacific Journal of Mathematics},
				volume={301},
				number={2},
				pages={617\ndash 638},
				review={\MR{MR4023361}},
			}
			
			\bib{wang2024distinction}{article}{
				author={Wang, C.},
				author={Zou, J.},
				title={Distinction of the {Steinberg} representation with respect to a
					symmetric pair},
				date={2024},
				journal={arXiv preprint arXiv:2410.03247},
			}
			
			\bib{weissman2009metaplectic}{article}{
				author={Weissman, M.~H.},
				title={Metaplectic tori over local fields},
				date={2009},
				journal={Pacific journal of mathematics},
				volume={241},
				number={1},
				pages={169\ndash 200},
				review={\MR{MR2485462}},
			}
			
			\bib{zou2022metaplectic}{article}{
				author={Zou, J.},
				title={Local metaplectic correspondence and applications},
				date={2023},
				ISSN={0025-5874},
				journal={Math. Z.},
				volume={305},
				number={3},
				pages={33},
			}
			
			\bib{zou2023simple}{article}{
				author={Zou, J.},
				title={Simple type theory for metaplectic covers of {$\mathrm{GL}(r)$}
					over a non-archimedean local field},
				date={2025},
				journal={arXiv preprint arXiv:2308.16143, to appear in J. Inst. Math.
					Jussieu.},
			}
			
		\end{biblist}
	\end{bibdiv}

\end{document}